\documentclass[english]{article}
\usepackage[T1]{fontenc}
\usepackage[utf8]{inputenc}
\usepackage{babel}
\usepackage{bbold}
\usepackage{multicol}
\usepackage{mathtools}
\usepackage{amsthm}
\usepackage{stmaryrd}
\usepackage{amsmath}
\usepackage{enumitem}
\usepackage[mathscr]{euscript}
\usepackage{amssymb}
\usepackage{multicol}
\usepackage[a4paper,left=3.5cm,right=3.5cm,top=3cm,bottom=3cm]{geometry}
\usepackage{tcolorbox}
\usepackage{dsfont}
\newtheorem{theorem}{Theorem}[section]
\newtheorem{crl}[theorem]{Corollary}
\newtheorem{prop}[theorem]{Proposition}
\newtheorem{lm}[theorem]{Lemma}
\newtheorem{defi}[theorem]{Definition}
\newtheorem{rmq}[theorem]{Remark}

\def\restriction#1#2{\mathchoice
	{\setbox1\hbox{${\displaystyle #1}_{\scriptstyle #2}$}
		\restrictionaux{#1}{#2}}
	{\setbox1\hbox{${\textstyle #1}_{\scriptstyle #2}$}
		\restrictionaux{#1}{#2}}
	{\setbox1\hbox{${\scriptstyle #1}_{\scriptscriptstyle #2}$}
		\restrictionaux{#1}{#2}}
	{\setbox1\hbox{${\scriptscriptstyle #1}_{\scriptscriptstyle #2}$}
		\restrictionaux{#1}{#2}}}
\def\restrictionaux#1#2{{#1\,\smash{\vrule height .8\ht1 depth .85\dp1}}_{\,#2}}

\newcommand{\rr}{\mathbb{R}}
\newcommand{\cc}{\mathbb{C}}

\newcommand{\spn}{\text{Span}}
\newcommand{\nn}{\mathbb{N}}
\newcommand{\zz}{\mathbb{Z}}
\newcommand{\dt}{\mathrm{d}t}

\newcommand{\ph}{\varphi}
\newcommand{\ep}{\varepsilon}

\DeclareMathOperator{\ad}{ad}
\numberwithin{equation}{section}
\title{Quadratic obstructions to Small-Time Local Controllability for the multi-input bilinear \\Schrödinger equation}
\author{Théo Gherdaoui\thanks{Univ Rennes, CNRS, IRMAR - UMR 6625, F-35000 Rennes, France}}
\usepackage[pdfborder={0 0 0}]{hyperref}
	
\setlength\parindent{24pt}

\RequirePackage[hyperref,backend=bibtex, 
style=numeric,maxnames=99
]{biblatex}
\addbibresource{mabiblio.bib}

\begin{document}
	\maketitle
	\begin{abstract}
		We investigate the small-time local controllability (STLC) near the ground state of a bilinear Schrödinger equation when the linearized system is not controllable. It is well known that, for single-input systems, quadratic terms in the state expansion can then lead to obstructions to the STLC of the nonlinear system. In this work, we extend this phenomenon to the multi-input setting, presenting the first example of multi-input quadratic obstructions for PDEs. Our results build upon our previous study of such obstructions for ODEs and provide a functional framework for analyzing them in the bilinear Schrödinger equation.
		
		\vspace{0.25 cm}
		\noindent\textbf{Keywords:} Bilinear Schrödinger equation, quadratic obstructions, small-time local exact controllability, power series expansion.
	\end{abstract}
	\section{Introduction}
	\subsection{Model and problem}
	Let $r\in\nn^*$. In this article, we study the following Schrödinger equation
	\begin{equation} \label{schr}
		\left\lbrace \begin{array}{lr}
			i \partial_t \psi(t,x)=- \partial_x^2\psi(t,x)-u(t)\cdot\mu(x)\psi(t,x), &t\in (0,T),x\in(0,1), \\
			\psi(t,0)=\psi(t,1)=0,&t \in (0,T), \\
			\psi(0,x)=\psi_0(x),&x\in(0,1),
		\end{array}\right.
	\end{equation}
	where $\mu:(0,1) \to \rr^r$, $T>0$,  $u:(0,T)\to \rr^r$, $\psi:(0,T)\times (0,1) \to \cc$, $\|\psi_0\|_{L^2(0,1)}=1$, and $(x,y)\in\rr^r\mapsto x\cdot y\in\rr$ is the canonical scalar product on $\rr^r$. We will use the notations $u:=\left(u^1,\cdots,u^r\right)$ and $\mu:=\left(\mu_1,\cdots,\mu_r\right)$ in all the document.
	When well defined, the solution is denoted $\psi$. When required, we will write $\psi(\cdot;u,\psi_0)$ to refer to this solution to emphasize its dependence on the different parameters. 
	\medskip
	
	This equation describes the evolution of the wave function $\psi$ of a quantum particle in a $1$D infinite square potential well $(0,1)$, subjected to $r$ electric fields with amplitudes $u^{\ell}(t)$.
	The functions $\mu_{\ell}$, called « dipolar moments », model the interaction between the particle's wave function $\psi$ and the electric fields $u^{\ell}(t)$.
	\medskip
	
	This is a multi-input nonlinear control system: 
	\begin{itemize}\vspace{-0.2 cm}
		\item[-] the state is the wave function $\psi: (0,T) \to \mathcal{S}$, where $\mathcal{S}$ denotes the $L^2(0,1)$ sphere,  \vspace{-0.2 cm}
		\item[-] the controls are $u^{\ell}:(0,T) \to \rr$, they act bilinearly on the state.
	\end{itemize}
	\medskip
	
	The fundamental state or ground state is the particular free -- with $u\equiv 0$ -- trajectory $(t,x)\mapsto \sqrt{2}\sin(\pi x)e^{-i \pi^2 t}$. We are interested in the small-time local controllability around the ground state.
	\subsection{Notations and definitions}
	
	Let $T>0$ and $k\in\nn$. We consider the Sobolev space $H^k((0,T),\rr)$, equipped with its usual norm and $H_0^k((0,T),\rr)$ the adherence of $\mathcal{C}^{\infty}_c((0,T),\rr)$ for this norm. 
	
	Except explicit precision, we will work with complex valued space-dependent functions. We consider the space $L^2(0,1)$, endowed with the hermitian scalar product $\langle\cdot,\cdot\rangle$.
	We define the operator \begin{equation}\label{laplacien}A:=-\partial^2_x \text{ with  domain } D(A):=H^2(0,1)\cap H^1_0(0,1).\end{equation} 
	The eigenvalues and eigenvectors of $A$ are
	\begin{equation}\label{vp}\lambda_j:=(j\pi)^2,\qquad\ph_j:=\sqrt{2}\sin(j\pi\cdot),\qquad j\geq1.\end{equation}
	The family $(\ph_j)_{j\geq1}$ is an orthonormal basis of $L^2(0,1)$. We denote by
	$$\psi_j(t,x):=\ph_j(x)e^{-i\lambda_jt},\qquad t\in (0,T),x\in (0,1), \qquad j\geq1,$$
	the solutions to the free -- \textit{i.e.}\ with controls $u\equiv 0$ -- Schrödinger equation \eqref{schr} with initial data $\ph_j$ at time $t=0$ \textit{i.e.}\ $\psi(\cdot;u,\ph_j)$. The solution $\psi_1$ is called the fundamental state, or ground state. We finally define the spaces $H^s_{(0)}(0,1):=D(A^{\frac s2})$ for $s\geq 0$, equipped with the norm
	$$\left\|\ph\right\|_{H^s_{(0)}}:=\left(\sum_{j=1}^{+\infty}|j^s\langle\ph,\ph_j\rangle|^2\right)^{1/2}.$$
	With these notations, the equation \eqref{schr} is well-posed, in the sense of the following proposition, proved for the single-input case in \cite[Proposition $2$]{beauchard2010local} and which can be adapted to our case.
	\begin{prop}[Well-posedness]\label{well-posed} Let $T>0$, $\mu\in H^3((0,1),\rr)^r$, $u\in L^2((0,T),\rr)^r$ and $\psi_0\in H^3_{(0)}(0,1)$. There exists a unique mild solution $\psi\in\mathcal{C}^0\left([0,T],H_{(0)}^3(0,1)\right)$ to the equation \eqref{schr}.
	\end{prop}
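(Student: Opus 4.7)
The plan is to adapt, in a routine way, the fixed-point argument of \cite{beauchard2010local} for the single-input case to the multi-input setting. Writing the Duhamel formulation of \eqref{schr},
$$
\Phi(\psi)(t) := e^{-iAt}\psi_0 + i\int_0^t e^{-iA(t-s)}\Bigl(\sum_{\ell=1}^r u^\ell(s)\mu_\ell\Bigr)\psi(s,\cdot)\,\mathrm{d}s,
$$
I would show that $\Phi$ is a contraction on a suitable closed ball of $\mathcal{C}^0([0,T^*],H^3_{(0)}(0,1))$ for some $T^*>0$ depending on $\|u\|_{L^2}$ and $\|\mu\|_{H^3}$. Uniqueness then follows from the Banach fixed-point theorem, and extension to the full interval $[0,T]$ by iteration, using that the $L^2(0,1)$-norm of a mild solution is preserved by the (formally) unitary dynamics, so no blow-up can occur in finite time.

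The nontrivial ingredient is a smoothing-type estimate for the Duhamel term in $H^3_{(0)}$. The naive pointwise-in-time bound fails, because a product $V\psi$ with $V\in H^3$ and $\psi\in H^3_{(0)}$ does not in general belong to $H^3_{(0)}$: the boundary conditions on $\partial_x^2(V\psi)$ need not vanish. The fix in \cite{beauchard2010local} is to integrate by parts in $s$ using the identity $A\,e^{-iA(t-s)} = i\partial_s e^{-iA(t-s)}$, trading one factor of $A$ for a time derivative acting on the data. This yields, schematically, an inequality of the form
$$
\Bigl\|\int_0^t e^{-iA(t-s)} V(s,\cdot)\psi(s,\cdot)\,\mathrm{d}s\Bigr\|_{H^3_{(0)}} \leq C(T)\,\|V\|_{L^2((0,T),H^3)}\,\|\psi\|_{\mathcal{C}^0([0,T],H^3_{(0)})}
$$
with $C(T)\to 0$ as $T\to 0$, together with a matching bound for the difference $\Phi(\psi_1)-\Phi(\psi_2)$, which I would import directly.

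Passing from single-input to multi-input is then essentially linear. Cauchy--Schwarz gives
$$
\Bigl\|\sum_{\ell=1}^r u^\ell(\cdot)\mu_\ell\Bigr\|_{L^2((0,T),H^3)} \leq \|u\|_{L^2((0,T),\rr^r)}\,\|\mu\|_{H^3((0,1),\rr)^r},
$$
so the effective potential $V(s,x)=\sum_{\ell} u^\ell(s)\mu_\ell(x)$ plays exactly the same role as in the scalar case, up to a constant depending on $r$. The only point to check is that every step of the single-input argument is linear (or at worst bilinear) in the potential, which is indeed the case since the whole machinery rests on products, integration by parts, and the triangle inequality. In that sense there is no genuine obstacle: the hard estimate was established once and for all in \cite{beauchard2010local}, and the multi-input proposition follows by direct substitution.
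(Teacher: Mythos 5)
The paper gives no proof of its own here---it simply asserts that \cite[Proposition 2]{beauchard2010local} adapts to the multi-input case---and your proposal is precisely that adaptation: Duhamel formulation, the smoothing estimate for the source term measured in $L^2((0,T),H^3\cap H^1_0)$, a contraction on small subintervals driven by the smallness of $\|u\|_{L^2(T_i,T_{i+1})}$, and the correct observation that the potential $u(s)\cdot\mu$ enters linearly so the multi-input case follows by substitution. One caveat worth recording: your account of how the smoothing lemma is actually proved in \cite{beauchard2010local} is inaccurate---the mechanism is not an integration by parts in time (which would require time-differentiability of the data, unavailable for $u\in L^2$), but an integration by parts in the space variable, which leaves a non-vanishing boundary contribution of the form $(-1)^jf''(t,1)-f''(t,0)$ at order $j^{-3}$, controlled in $\ell^2_j$ by an Ingham-type inequality for the frequencies $\lambda_j=(j\pi)^2$ applied to $t\mapsto f''(t,0),f''(t,1)\in L^2(0,T)$; since you import the estimate as a black box, this does not affect the validity of your argument.
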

	We can now define the notion of small-time local controllability we are working on.
	\begin{defi}\label{defistlc} Let $E_T$ be a family of normed vector spaces of functions defined on $[0,T]$ for $T>0$. The bilinear Schrödinger equation \eqref{schr} is $E$-STLC in $H^3_{(0)}(0,1)$ around the ground state if for every $T, \varepsilon>0$, there exists $\delta>0$ such that for every $\psi_f \in H^3_{(0)}(0,1)\cap \mathcal{S}$ with $\|\psi_f - \psi_1(T)\|_{H^3}<\delta$,
		there exists $u\in\left(E_T\right)^r\cap L^2((0,T),\rr)^r$ such that $\left\|u\right\|_{E_T}\leq\varepsilon$ and $\psi(T;u,\varphi_1)=\psi_f$.
	\end{defi}
	Under appropriate assumptions on the dipolar moments $\mu_{\ell}$, the STLC around the ground state of \eqref{schr} can be proved by applying the linear test. 
	This is the purpose of the following statement, that can be proved by adapting with several controls the strategy developed in \cite{beauchard2010local,bournissou2021local} with one control.
	\begin{prop}\label{lintest}
		Let $\mu\in H^3((0,1),\rr)^r$ be such that
		\begin{equation} \label{hyp_linearisé_controlable}
			\exists C>0, \ \forall j \in \nn^*, \
			\sum_{\ell=1}^r|\langle \mu_{\ell} \varphi_1 , \varphi_j \rangle| \geq \frac{C}{j^3}.
		\end{equation}
		Then, the bilinear Schrödinger equation \eqref{schr} is $L^2$-STLC in
		$H^3_{(0)}(0,1)$ around the ground state.
	\end{prop}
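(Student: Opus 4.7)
The plan is to apply the classical linear test. Consider the end-point map
\[
\Theta : L^2((0,T),\rr)^r \longrightarrow H^3_{(0)}(0,1), \qquad u \longmapsto \psi(T;u,\varphi_1).
\]
By a $\mathcal{C}^1$-upgrade of Proposition \ref{well-posed} (routine from the single-input theory), $\Theta$ is well defined and $\mathcal{C}^1$ on a neighbourhood of $0$, with image in $\mathcal{S}$. The $L^2$-STLC then follows from a constrained inverse mapping theorem on the codimension-one real submanifold $\mathcal{S} \cap H^3_{(0)}$, provided I show that $d\Theta(0)$ is surjective onto the tangent space $T_{\psi_1(T)}\mathcal{S} \cap H^3_{(0)}$.

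By Duhamel's formula, the coordinates $x_j := \langle d\Theta(0)\cdot v,\varphi_j\rangle\, e^{i\lambda_j T}$ of $d\Theta(0)\cdot v$ satisfy
\[
x_j = i \sum_{\ell=1}^r \langle \mu_\ell \varphi_1, \varphi_j\rangle \int_0^T v^\ell(t)\, e^{i(\lambda_j - \lambda_1)t}\, \dd t, \qquad j\geq 1.
\]
Hence surjectivity amounts to solving this multi-input trigonometric moment problem for every target $(x_j)_{j\geq 1}$ with $\sum_{j\geq 1} j^6|x_j|^2<\infty$ and $\Re(x_1)=0$ (the sphere constraint).

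The key idea --- essentially the only new ingredient compared with the single-input case --- is to decouple the system into $r$ independent single-input moment problems by distributing the frequencies among the controls. For each $j\geq 1$, fix $\ell(j)\in\{1,\dots,r\}$ maximizing $|\langle \mu_\ell \varphi_1,\varphi_j\rangle|$; assumption \eqref{hyp_linearisé_controlable} gives $|\langle \mu_{\ell(j)}\varphi_1,\varphi_j\rangle|\geq C/(rj^3)$. Setting $J_\ell:=\{j : \ell(j)=\ell\}$, I look for each $v^\ell\in L^2((0,T),\rr)$ satisfying
\[
\int_0^T v^\ell(t)\, e^{i(\lambda_j-\lambda_1)t}\,\dd t = \begin{cases} x_j\big/\bigl(i\langle \mu_\ell\varphi_1,\varphi_j\rangle\bigr), & j\in J_\ell, \\[0.3ex] 0, & j\notin J_\ell, \end{cases}
\]
so that, after summation over $\ell$, each coordinate equation collapses to $x_j$ as required.

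Since $\lambda_{j+1}-\lambda_j=(2j+1)\pi^2\to+\infty$, an Ingham-type inequality makes $(e^{i(\lambda_j-\lambda_1)t})_{j\geq 1}$ a Riesz basis of the closure of its span in $L^2(0,T)$ for every $T>0$ --- the same ingredient used in the single-input proofs of \cite{beauchard2010local,bournissou2021local}. Each single-input moment problem is therefore solvable in $L^2((0,T),\rr)$ as soon as the prescribed data is square-summable and real at the zero frequency. Square-summability follows from $|\langle \mu_{\ell(j)}\varphi_1,\varphi_j\rangle|^{-1}\lesssim j^3$ exactly absorbing the $H^3$-decay of $(x_j)$, and the reality at $j=1$ is automatic since $x_1\in i\rr$ and $\langle \mu_\ell \varphi_1,\varphi_1\rangle\in\rr$. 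The point requiring most care is the uniform $L^2$-control of the family $(v^\ell)$ by the $H^3$-norm of the target $\psi_f-\psi_1(T)$, which is delivered by the Riesz basis estimate combined with \eqref{hyp_linearisé_controlable}; this uniform bound is exactly what allows the constrained inverse mapping theorem to close the argument.
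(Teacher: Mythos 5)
Your proposal is correct and follows essentially the same route as the paper's (deliberately brief) argument: controllability of the linearized system, a $\mathcal{C}^1$ end-point map from $L^2$ into $H^3_{(0)}(0,1)$, and the inverse mapping theorem. Your frequency-partition trick --- assigning each $j$ to the control $\ell(j)$ maximizing $|\langle\mu_{\ell}\varphi_1,\varphi_j\rangle|$ so as to reduce to $r$ single-input trigonometric moment problems solvable by the Ingham/Riesz-basis argument --- is exactly the multi-input adaptation the paper leaves to the reader when it says the strategy of \cite{beauchard2010local,bournissou2021local} "can be adapted with several controls".
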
 
	\begin{proof}[Idea of proof for Proposition \ref{lintest}] Assumption (\ref{hyp_linearisé_controlable}) guarantees that the linearized system around the ground state -- see \eqref{schro:lin} -- is small-time globally controllable with states in
		$H^3_{(0)}(0,1)\cap\mathcal{S}$ and controls in $L^2$.
		Moreover, one can prove that the following end-point map is of class $\mathcal{C}^1$
		$$\Theta_T:u\in L^2((0,T),\rr)^r \mapsto \psi(T;u,\varphi_1) \in H^3_{(0)}(0,1)\cap \mathcal{S}.$$
		By applying the inverse mapping theorem, we obtain the $L^2$-STLC of the nonlinear system \eqref{schr}.
	\end{proof}
	\subsection{Main result}\label{dipolar}
	
	For all the document, $k,K,r\in\nn^*$ are fixed integers. To ensure that equation \eqref{schr} is well-posed in $H^3_{(0)}(0,1)$ -- see Proposition \ref{well-posed} -- we assume that
	$$\mathbf{(H)_{reg}}: \quad\mu\in H^3((0,1),\rr)^r.$$
	In this article, we study the case where the linearized system around the ground state -- see \eqref{schro:lin} -- is not controllable,
	so we consider an  integer $K \geq1$ and we assume 
	$$\mathbf{(H)_{lin}}:\quad \langle\mu_{\ell}\ph_1,\ph_K\rangle=0,\quad 1\leq\ell\leq r.$$
	In this case, the linearized system misses one direction $\langle \psi(t;u,\ph_1), \varphi_K \rangle \in \cc$,
	\textit{i.e.}\ the first order term in the Taylor expansion at $0$ of the map $u \mapsto \langle \psi(t;u,\varphi_1),\varphi_K \rangle$ vanishes.
	To prove an obstruction to STLC, we will use a « power series expansion » of order $2$, \textit{i.e.}\ 
	the second order term in the Taylor expansion at $0$  of $u \mapsto \langle \psi(t;u,\ph_1), \varphi_K \rangle$. We make assumptions about the quadratic expansion of the solution. To state them, we need to introduce the following definitions. 
	\begin{defi}\label{defcj} For all $1\leq\ell,L\leq r$, we define the following sequences $$c^{\ell,L}:=\left(c^{\ell,L}_j\right)_{j\geq 1}:=\left(\left\langle\mu_{\ell}\ph_K,\ph_j\right\rangle\left\langle \mu_L\ph_1,\ph_j\right\rangle\right)_{j\geq 1}.$$
		We will denote $c^{\ell}:=c^{\ell,\ell}$ and $c_j^{\ell}:=c_j^{\ell,\ell}$ for $j\geq 1$ and $1\leq\ell\leq r$.
	\end{defi}
	Quadratic hypotheses are formulated as properties on series. To ensure convergence, we assume the following
	$$\mathbf{(H)_{conv}}: \quad \displaystyle\sum_{j=1}^{+\infty}\left|c_j^{\ell,L}\right|j^{4k}<+\infty, \quad 1\leq\ell,L\leq r.$$
	\begin{rmq}\label{rmqassympto} By Cauchy--Schwarz's inequality, $\mathbf{(H)_{conv}}$ holds when $\mu_{\ell}\ph_K$ and $\mu_L\ph_1$ belong to $H^{2k}_{(0)}(0,1)$, for every $1\leq\ell,L\leq r$. In particular, this is the case when $\mu \in H^{2k}(0,1)^r$ with vanishing odd derivatives, or $\mu\in\mathcal{C}^{\infty}_c(0,1)^r$.
	\end{rmq}
	\begin{defi}[Quadratic brackets]\label{defquadbra}Assuming $\mathbf{(H)_{conv}}$, we define the following series which converge,
		$$\gamma_p^{\ell,L}:=\sum_{j=1}^{+\infty}\left((\lambda_K-\lambda_j)^{\lfloor\frac{p+1}{2}\rfloor}(\lambda_j-\lambda_1)^{\lfloor\frac{p}{2}\rfloor}c_j^{\ell,L}-(\lambda_K-\lambda_j)^{\lfloor\frac{p}{2}\rfloor}(\lambda_j-\lambda_1)^{\lfloor\frac{p+1}{2}\rfloor}c_j^{L,\ell}\right),$$
		for $0\leq p\leq 2k-1$ and $1\leq \ell\leq L\leq r$. When $\ell=L$, we will note $\gamma_p^{\ell}$ instead of $\gamma_p^{\ell,\ell}$.
	\end{defi}
	\begin{rmq}
		For every $1\leq\ell\leq r$, $0\leq p\leq k-1$, one has $\gamma^{\ell}_{2p}=0$ 
	\end{rmq}
	We are now able to state the hypotheses concerning the quadratic terms
	$$\mathbf{(H)_{null}}: \quad \forall1\leq \ell\leq L\leq r, \ \forall 0\leq p\leq 2k-2, \quad \gamma_p^{\ell,L}=0,$$
	\begin{equation*}\begin{gathered}\mathbf{(H)_{pos}}:  \quad q:(a_1,\cdots,a_r)\in\rr^r\mapsto \displaystyle\sum_{\ell=1}^r\gamma_{2k-1}^{\ell}\frac{a_{\ell}^2}{2}+\sum_{1\leq \ell<L\leq r}\hspace{-0.2 cm}\gamma_{2k-1}^{\ell,L}a_{\ell}a_L\in\rr\\
			\text{ is a definite quadratic form on }\rr^r.\end{gathered}\end{equation*}
	\begin{rmq}\label{rmqcoer} If $\mathbf{(H)_{pos}}$ holds, $\text{sgn}(\gamma_{2k-1}^1)q$ is a positive-definite quadratic form on $\rr^r$.
	\end{rmq}
	\begin{rmq}\label{rmqliner=2}
		When $r=2$, $\mathbf{(H)_{pos}}$ holds \textit{iff} $\left(\gamma_{2k-1}^{1,2}\right)^2<\gamma_{2k-1}^{1}\gamma_{2k-1}^{2}$.
	\end{rmq}
	At first glance, these assumptions may seem mysterious and technical, but they can be interpreted in terms of Lie brackets as detailed in Section \ref{lienode}. The main result of this article is the following theorem
	\pagebreak
	\begin{theorem}\label{maintheorem}
		Let $k,K,r\in \nn^*$,
		$\mu$ satisfying $\mathbf{(H)_{reg}}$, $\mathbf{(H)_{lin}}$, $\mathbf{(H)_{conv}}$, $\mathbf{(H)_{null}}$ and $\mathbf{(H)_{pos}}$.
		Then, if $k\geq 2$ (resp.\ if $k=1$), the multi-input bilinear Schrödinger equation \eqref{schr} is not $H^{2k-3}$-STLC (resp.\ $W^{-1,\infty}$-STLC) in $H^3_{(0)}(0,1)$ around the ground state. 
	\end{theorem}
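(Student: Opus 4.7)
The plan is to carry out a power series expansion of order two for the end-point map $\Theta_T : u \mapsto \psi(T;u,\varphi_1)$, project it onto $\varphi_K$, and exploit $\mathbf{(H)_{null}}$--$\mathbf{(H)_{pos}}$ to exhibit a coercive quadratic drift in this missing direction. Write $\psi = \Psi + \xi + \rho$, where $\Psi$ solves the linearized Schrödinger equation around the ground state, $\xi$ (solving a linear Schrödinger equation with source $-i u\cdot\mu\,\Psi$) is the second-order term, and $\rho$ is the cubic remainder. Adapting Proposition \ref{well-posed} and the arguments of \cite{beauchard2010local} to $r$ inputs, the map $\Theta_T$ will be $\mathcal{C}^2$ from $L^2((0,T),\rr)^r$ into $H^3_{(0)}(0,1)$, with a cubic bound $\|\rho(T)\|_{H^3_{(0)}} \lesssim \|u\|_{L^2}^3$. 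By $\mathbf{(H)_{lin}}$, the linear piece $\langle \Psi(T),\varphi_K\rangle$ vanishes, so the leading contribution on the missing direction is the quadratic one.

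Using Duhamel's formula and the orthonormal basis $(\varphi_j)_{j\geq 1}$, one obtains, up to a global phase,
$$\langle \xi(T),\varphi_K\rangle \;=\; \sum_{\ell,L=1}^r \sum_{j=1}^{\infty} c_j^{\ell,L} \iint_{0<s<t<T} e^{i(\lambda_K-\lambda_j)t+i(\lambda_j-\lambda_1)s}\,u^\ell(t)\,u^L(s)\,ds\,dt,$$
with convergence of the series in $j$ ensured by $\mathbf{(H)_{conv}}$. Integrating by parts repeatedly in $s$ and $t$ transfers the oscillating phase onto iterated primitives of the controls; each extraction produces a boundary term which, after symmetrization over $(\ell,L)$, is exactly one of the $\gamma_p^{\ell,L}$. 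By $\mathbf{(H)_{null}}$, these boundary contributions vanish for $p\leq 2k-2$, so the first non-trivial term is the one carrying $\gamma_{2k-1}^{\ell,L}$. After extracting the appropriate real or imaginary part, the dominant contribution reads $q\bigl(u_k^1(T),\ldots,u_k^r(T)\bigr)$, where $q$ is the form of $\mathbf{(H)_{pos}}$ and $u_k^\ell$ is the $k$-th iterated primitive of $u^\ell$ vanishing at $0$.

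The last step combines Remark \ref{rmqcoer} (coercivity of $\text{sgn}(\gamma_{2k-1}^1)\,q$) with remainder estimates to produce
$$\text{sgn}(\gamma_{2k-1}^1)\,\text{Im}\,\bigl\langle \psi(T;u,\varphi_1)-\psi_1(T),\varphi_K\bigr\rangle \;\geq\; \tfrac{c}{2}\sum_{\ell=1}^r u_k^\ell(T)^2$$
for all controls sufficiently small in the relevant weak norm. Choosing $\psi_f$ arbitrarily close to $\psi_1(T)$ in $H^3_{(0)}$ but with the left-hand side strictly negative then prevents reachability and contradicts STLC. The \textbf{main obstacle} is precisely this last coercive estimate: the cubic term from $\rho$ is easily absorbed, but the residual quadratic terms -- time integrals of products of iterated primitives of the controls -- must be shown to be strictly smaller than the pointwise-at-$T$ quantity $\sum_\ell u_k^\ell(T)^2$. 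The norms $H^{2k-3}$ (for $k\geq 2$) and $W^{-1,\infty}$ (for $k=1$) are precisely the weakest ones in which fast-oscillating controls can make $u_k^\ell(T)^2$ dominate the averaged quadratic remainders, while $\mathbf{(H)_{conv}}$ keeps the series in $j$ absolutely convergent and uniform in $u$. The author's previous ODE analysis supplies the template for the quadratic-form manipulation and the interpolation between primitives and oscillations; the PDE setting adds the functional difficulty of uniform tail estimates in $j$ together with the $H^3_{(0)}$ regularity control on $\rho$.
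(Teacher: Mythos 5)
Your skeleton (second-order expansion, projection on $\varphi_K$, repeated integration by parts transferring the phases onto iterated primitives, use of $\mathbf{(H)_{null}}$ to isolate $\gamma_{2k-1}^{\ell,L}$) matches the paper's strategy, but the core of the drift mechanism is misidentified. After the integrations by parts, the surviving $\gamma_{2k-1}^{\ell,L}$ contribution is \emph{not} the pointwise quantity $q\bigl(u_k^1(T),\dots,u_k^r(T)\bigr)$: it is the time integral $(-1)^{k+1}\int_0^T q(u_k(t))\cos(\omega_K(t-T))\,\mathrm{d}t$, which for $T$ small is coercive like $\|u_k\|_{L^2(0,T)}^2$ (Proposition \ref{devgt} and \eqref{coer}). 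The roles are the opposite of what you state: the genuine boundary terms $|u_p^{\ell}(T)|^2$, $1\le p\le k$, produced by the integrations by parts are the quantities that must be shown \emph{negligible}, and this is not automatic — the paper proves it via the closed-loop estimates of Proposition \ref{closed-lopp}, which rest on the linear independence of the moment sequences $\bigl(\langle\mu_\ell\varphi_1,\varphi_j\rangle(-i\omega_j)^p\bigr)_j$ (Lemma \ref{liberte}), itself a nontrivial consequence of $\mathbf{(H)_{null}}$ and $\mathbf{(H)_{pos}}$. A drift of the form $q(u_k(T))$ alone would give no obstruction, since one can always impose $u_k(T)=0$.

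The second gap is the remainder estimate. Your bound $\|\rho(T)\|\lesssim\|u\|_{L^2}^3$ cannot be absorbed by the drift: for fast-oscillating $u$ one has $\|u\|_{L^2}^3\gg\|u_k\|_{L^2}^2$ (take $u(t)=\varepsilon\sin(nt)$, so $\|u\|_{L^2}\sim\varepsilon$ while $\|u_k\|_{L^2}\sim\varepsilon n^{-k}$). The paper needs the sharp estimate $\mathcal{O}(\|u_1\|_{L^2}^3)$, obtained through the auxiliary state $\widetilde{\psi}=\psi\,e^{-iu_1\cdot\mu}$ (and it is here that the commutation of the multiplication operators $B_\ell$ saves the multi-input case, where such estimates fail for general ODE systems), followed by the Gagliardo--Nirenberg interpolation of Lemma \ref{gn1} to dominate $\|u_1\|_{L^2}^3$ by $\|u\|_{H^{2k-3}}\|u_k\|_{L^2}^2$. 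Your phrase ``interpolation between primitives and oscillations'' points in the right direction but does not supply either ingredient; without them the coercive inequality you assert in the last display cannot be closed.
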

	Ideas for proving the existence of such functions $\mu_1,\cdots,\mu_r$ are given in Appendix \ref{existe}.
	\begin{rmq} When $r=1$, hypothesis $\mathbf{(H)_{pos}}$ is equivalent to $\gamma_{2k-1}^1\neq 0$. We then obtain \cite[Theorem $1.3$]{bournissou2021quad}.
	\end{rmq}
	\subsection{Heuristic}\label{heurist}
	To simplify the notations, we will use the following definition. 
	
	\begin{defi} For $T>0$ and $f\in L^1((0,T),\rr)$, one defines the iterated primitives $f_n$ vanishing at $0$ by induction as
		\begin{equation}\label{primitive}f_0:=f \quad  \text{and} \quad \forall n\in\nn, \  t\in [0,T], \quad f_{n+1}(t):=\int_0^tf_n(\tau)\mathrm{d}\tau.\end{equation}
		When $f=(f^1,\cdots,f^r)\in L^1((0,T),\rr)^r$, $f_n$ will denote $(f_n^1,\cdots,f_n^r)$. 
	\end{defi}
	
	The essence of Theorem \ref{maintheorem} can be found in the following example of a finite-dimensional control-affine system
	\begin{equation}\label{jouetex}\left\lbrace\begin{array}{rcl}x_1'&=&u^1\\x_2'&=&x_1\\x_3'&=&u^2\\x_4'&=&\left(x_1^2+\frac{1}{2}x_1x_3+2x_3^2\right)-2u^2x_3-x_2^2-u^2x_1^2\end{array}\right..\end{equation}
	Let $T>0$. Explicit integration from $0$ yields
	\begin{align*}x_4(T)&=\int_0^T\left(\left(u_1^1\right)^2+\frac{1}{2}u_1^1u^2_1+2\left(u_1^2\right)^2\right)-u_1^2(T)^2-\int_0^T\left(\int_0^tu_1^1\right)^2\dt-\int_0^Tu^2\left(u_1^1\right)^2
		\\&\geq \left(\frac{3}{4}-\frac{T^2}{2}-\left\|u\right\|_{L^{\infty}(0,T)}\right)\left\|u_1\right\|_{L^2(0,T)}^2-u^2_1(T)^2\\&\geq C\left\|u_1\right\|_{L^2(0,T)}^2-u^2_1(T)^2,\end{align*}
	for any $C\in(0,\frac 34)$, for small enough times and controls in $L^{\infty}$. As $x_3(T)=u^2_1(T)$, all final state in $\{(x_1,\cdots,x_4)\in\rr^4; \ x_4+x_3^2<0\}$ is not reachable from $0$. The system \eqref{jouetex} is not $L^{\infty}$-STLC around $0$ - see Definition \ref{stlc}.
	
	\medskip 
	
	To prove Theorem \ref{maintheorem}, we use a « power series expansion » of order $2$. Heuristically, it is as if the following terms are the dominant part of the quadratic expansion of the solution in the direction $\ph_Ke^{-i\lambda_1T}$
	\begin{equation}\label{heurisart3termdom}\sum_{\ell=1}^r\sum_{p=1}^{k}i^{2p-1}\gamma_{2p-1}^{\ell} \int_0^T \frac{u_p^{\ell}(t)^2}{2}\dt + \sum_{1\leq \ell<L\leq r}\sum_{p=0}^{2k-1} i^p\gamma_p^{\ell,L}\int_0^T u^{\ell}_{\lfloor\frac{p}{2}\rfloor+1}(t)u^L_{\lfloor\frac{p+1}{2}\rfloor}(t)\dt .\end{equation}
	The cancellation assumption $\mathbf{(H)_{null}}$ reduces this sum to the simpler expression
	\begin{equation} \label{DL_noyau}
		i(-1)^{k+1}\sum_{\ell=1}^r\gamma_{2k-1}^{\ell}\int_0^T\frac{u_k^{\ell}(t)^2}{2}\dt+i(-1)^{k+1}\sum_{1\leq \ell<L\leq r}\gamma_{2k-1}^{\ell,L} \int_0^T u_k^{\ell}(t)u_k^L(t)\dt,
	\end{equation}
	and $\mathbf{(H)_{pos}}$ allows use to take advantage of a signed term because \eqref{DL_noyau} and $\mathbf{(H)_{lin}}$ lead to
	\begin{equation*}(-1)^{k+1}\text{sgn}(\gamma_{2k-1}^1)\Im\left\langle \psi(T;u,\ph_1),\ph_Ke^{-i\lambda_1T}\right\rangle\simeq \text{sgn}(\gamma_{2k-1}^1)\int_0^Tq(u_k(t))\dt\geq 0.\end{equation*}
	
	\subsection{Comparison with the finite-dimensional case}\label{lienode}
	
	The main result of this article is an adaptation to \eqref{schr} of a theorem that proves quadratic obstructions to STLC for multi-input control-affine systems in \cite{gherdaoui2} for $r=2$. Here is the framework and key definitions. One considers the system
	\begin{equation}\label{affine-syst}x'(t)=f_0(x(t))+\sum_{\ell=1}^ru^{\ell}(t)f_{\ell}(x(t)),
	\end{equation}where the state is $x(t)\in\rr^d$, the controls are scalar functions $u^{\ell}(t)$ and $f_{\ell}$ are real-analytic vector fields on a neighborhood of $0$ in $\rr^d$ such that $f_0(0)=0$. The last hypothesis ensures that $0$ is an equilibrium of the free system -- \textit{i.e.}\ with $u\equiv 0$. 
	
	For each $t>0$, $u\in L^1((0,t),\rr)^r$, there exists a unique maximal mild solution to \eqref{affine-syst} with initial data $0$ at time $t=0$, which we will denote by $x(\cdot;u)$. As we are interested in small-time and small controls, the solution is well-defined up to time $t$.
	
	\begin{defi}[$W^{m,p}$-STLC]
		\label{stlc} Let $m\in\llbracket -1,+\infty\llbracket$ and $p\in[1,+\infty]$. The system \eqref{affine-syst} is $W^{m,p}$-STLC (around $0$) if for every $T,\ep>0$, there exists $\delta>0$, such that, for every $x_f\in B(0,\delta)$, there exist $u\in W^{m,p}((0,T),\rr)^r\cap L^1((0,T),\rr)^r$ with $\left\|u\right\|_{W^{m,p}}\leq \ep$ s.t. $x(T;u)=x_f$.
	\end{defi}
	The historical definition of STLC is the case where $m=0$ and $p=\infty$ -- see \cite{coronbook}.
	Let $X:= \{X_0,\cdots,X_r\}$ be a set of $r+1$ non-commutative indeterminates.
	\begin{defi}[Free Lie algebra]
		\label{def:free-algebra}
		We consider $\mathcal{A}(X)$ the unital associative algebra of polynomials of the indeterminates in $X$. For $a,b\in\mathcal{A}(X)$, one defines the Lie bracket of $a$ and $b$ as $[a,b]:= ab - ba$. We denote by $\mathcal{L}(X)$ the free Lie algebra generated by $X$ over the field $\rr$.
	\end{defi}
	\begin{defi}[Lie bracket of vector fields]
		Let $f,g:\Omega\to\rr^d$ be $\mathcal{C}^{\infty}$ vector fields on an open subset $\Omega$ of $\rr^d$. One defines
		\begin{equation*}
			[f,g]:x\in\Omega\mapsto \mathrm{D}g(x) \cdot f(x) - \mathrm{D}f(x) \cdot g(x).
		\end{equation*}
	\end{defi}
	\begin{defi}[Evaluated Lie bracket]
		\label{Def:evaluated_Lie_bracket}
		Let $f_0,\cdots,f_r$ be $\mathcal{C}^\infty(\Omega,\rr^d)$ vector fields on an open subset $\Omega$ of $\rr^d$.
		For $b \in \mathcal{L}(X)$, we define $f_b:=\Lambda(b)$, where $\Lambda:\mathcal{L}(X) \to \mathcal{C}^\infty(\Omega,\rr^d)$ is the unique homomorphism of Lie algebras such that $\Lambda(X_{\ell})=f_{\ell}$ for $0\leq\ell\leq r$.
	\end{defi}
	For example, if $b=[[[X_1,X_r],X_0],[X_{r-1},X_0]]$, one has $f_b=[[[f_1,f_r],f_0],[f_{r-1},f_0]]$.
	\begin{defi}[Bracket integration $b0^\nu$] \label{def:0nu}
		For $b \in\mathcal{L}(X)$ and $\nu \in \nn$, we use the short-hand $b 0^\nu$ to denote the right-iterated bracket $[\dotsb[b, X_0], \dotsc, X_0]$, where $X_0$ appears $\nu$ times.
	\end{defi}
	For example, if $b=[[X_{r},X_0],[X_0,X_r]]$, then $b0^2=[[[[X_r,X_0],[X_0,X_r]],X_0],X_0]$.
	\begin{defi} We define the following brackets
		\begin{enumerate}
			\item[a.] $M_l^{\ell}:=X_{\ell}0^l$; \ $(\ell,l)\in\llbracket1,r\rrbracket\times\nn$,
			\item[b.] $W_{p,l}^{\ell}:=\left[M_{p-1}^{\ell},M_p^{\ell}\right]0^l$; \ $(\ell,p,l)\in\llbracket1,r\rrbracket\times\nn^*\times\nn$, 
			\item[c.] $C_{p,l}^{\ell,L}:=(-1)^p\left[M_{\lfloor\frac{p+1}{2}\rfloor}^{\ell},M_{\lfloor\frac{p}{2}\rfloor}^L\right]0^l$; \ $(\ell,L)\in\llbracket 1,r\rrbracket^2$ such that $\ell<L$, $(p,l)\in\nn^2$.
		\end{enumerate}
	\end{defi}
	The following theorem generalizes \cite[Theorem $1.15$]{gherdaoui2}, originally stated for $r=2$. By applying the same method as the one introduced in \cite{beauchard2024unified}, we obtain the extended result presented here, which relies on a Magnus-type representation formula of the state.\footnote{A proof of this theorem is available in my thesis manuscript.} The proof is designed to prepare an easier transfer to PDEs.
	\begin{theorem}\label{dimfinie} Let $k,r\in\nn^*$ and $f_0,\cdots,f_r$ be real-analytic vector fields on a neighborhood of $0$ in $\rr^d$ such that $f_0(0)=0$.
		We define the vector subspace of $\rr^d$ \begin{equation*}\begin{gathered}\mathcal{N}_k:=\spn\left\lbrace f_{M_l^{\ell}}(0); \hspace{0.1 cm} (\ell,l)\in\llbracket 1,r\rrbracket\times\nn \right\rbrace+\spn\left\lbrace f_{W_{p,l}^{\ell}}(0); \hspace{0.1 cm} (\ell,p,l)\in\llbracket1,r\rrbracket\times\llbracket 1,k-1\rrbracket\times\nn\right\rbrace\\+\spn\left\lbrace f_{C^{\ell,L}_{p,l}}(0); \hspace{0.1 cm} (\ell,L)\in\llbracket 1,r\rrbracket^2\text{ such that }\ell<L, \ (p,l)\in\llbracket 0,2k-2\rrbracket\times\nn\right\rbrace.\end{gathered}\end{equation*}
		Assume that there exists a linear form $\mathbb{P}$ on $\rr^d$ such that $\restriction{\mathbb{P}}{\mathcal{N}_k}\equiv0$ and
		\begin{equation*}\begin{gathered}(a_1,\cdots,a_r)\in\rr^r\mapsto \sum_{\ell=1}^r\mathbb{P}\left(f_{W_{k,0}^{\ell}}(0)\right)\frac{a_{\ell}^2}{2}+\sum_{1\leq\ell<L\leq r} \mathbb{P}\left(f_{C_{2k-1,0}^{\ell,L}}(0)\right)a_{\ell}a_L\in\rr\text{ is a definite}\\\text{quadratic form on }\rr^r.
		\end{gathered}\end{equation*}
		Then, the affine system \eqref{affine-syst} is not $H^{2k}$-STLC around $0$.
	\end{theorem}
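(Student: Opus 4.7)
The plan is to apply a power series expansion of order two combined with a Magnus-type representation of the state, following the strategy of \cite{beauchard2024unified} extended to the multi-input setting. Such a representation writes $x(T;u)$ as a (formally convergent) series indexed by Lie brackets $f_b(0)$ for $b\in\mathcal{L}(X)$, whose coefficients are explicit iterated integrals of the controls. Composing with the linear form $\mathbb{P}$ immediately kills the linear contribution, because all first-order brackets $f_{M_l^\ell}(0)$ lie in $\mathcal{N}_k$. Among the quadratic contributions, the cancellation assumptions $\mathbb{P}(f_{W_{p,l}^\ell}(0))=0$ for $p\leq k-1$ and $\mathbb{P}(f_{C_{p,l}^{\ell,L}}(0))=0$ for $p\leq 2k-2$ eliminate every sub-dominant term, so that after repeated integration by parts the dominant remaining contribution should be precisely $\int_0^T q(u_k(t))\dt$, matching the heuristic of Section \ref{heurist}.

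Concretely, the first key step is to establish an expansion of the form
\begin{equation*}
\mathbb{P}\bigl(x(T;u)\bigr) = \int_0^T q(u_k(t))\dt + R(T,u),
\end{equation*}
where $q$ is the quadratic form in the statement (up to an overall sign) and the remainder satisfies
\begin{equation*}
|R(T,u)|\leq C\bigl(T+\|u\|_{H^{2k}}\bigr)\,\|u_k\|_{L^2(0,T)}^2,
\end{equation*}
uniformly for small $T$ and $\|u\|_{H^{2k}}$. The second step is then standard: assuming without loss of generality (via the evident analogue of Remark \ref{rmqcoer}) that $q$ is positive definite, coercivity yields, for $T$ and $\ep>0$ small enough and every $u\in H^{2k}((0,T),\rr)^r$ with $\|u\|_{H^{2k}}\leq\ep$,
\begin{equation*}
\mathbb{P}\bigl(x(T;u)\bigr)\geq \bigl(c-C(T+\ep)\bigr)\|u_k\|_{L^2(0,T)}^2\geq 0.
\end{equation*}
Since $\ep$ may be taken arbitrarily small while targets $x_f$ in the half-space $\{\mathbb{P}<0\}$ remain arbitrarily close to $0$, such targets cannot be reached, contradicting $H^{2k}$-STLC.

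The main obstacle is the remainder estimate on $R(T,u)$. The Magnus series contains brackets of arbitrarily high order, and to obtain the correct $\|u_k\|_{L^2}^2$ scaling one must decompose the tail along the filtration of $\mathcal{L}(X)$ by the $X_0$-degree: brackets of sufficiently high $X_0$-degree are absorbed into $\mathcal{N}_k$ and so vanish against $\mathbb{P}$, while the remaining ones are converted, through repeated integration by parts on their iterated-integral coefficients, into expressions involving the primitives $u_k^\ell$ with an $H^{2k}$-small prefactor coming from the leftover occurrences of $u$ in the kernel. This bookkeeping is the core technical content; it is the multi-input analogue of the single-input analysis of \cite{beauchard2024unified}, the genuinely new ingredient being the cross brackets $C_{p,l}^{\ell,L}$ which produce the off-diagonal coefficients of $q$ and must be tracked through the same integration-by-parts procedure as the diagonal brackets $W_{p,l}^\ell$.
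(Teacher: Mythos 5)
Your overall strategy is the one the paper itself indicates for this theorem (the paper gives no proof here, only a pointer to the method of \cite{beauchard2024unified} and to \cite{gherdaoui2}, deferring details to the thesis): a second-order expansion via a Magnus-type representation, killing the linear and sub-dominant quadratic brackets with $\restriction{\mathbb{P}}{\mathcal{N}_k}\equiv 0$, isolating $\int_0^T q(u_k)$, and absorbing the cubic tail through an interpolation in $H^{2k}$. So the route is the intended one.

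There is, however, a concrete gap in your remainder estimate, and it propagates to your conclusion. The repeated integrations by parts that convert the iterated-integral coefficients into expressions in $u_k$ do not only produce terms bounded by $(T+\|u\|_{H^{2k}})\|u_k\|_{L^2}^2$: they leave boundary terms $|u_p^{\ell}(T)|^2$, $1\leq p\leq k$, which are \emph{not} small compared to $\|u_k\|_{L^2(0,T)}^2$ for arbitrary admissible controls. These must be handled by a separate closed-loop argument: one shows that the family $\left(f_{\ad^{p}_{X_0}}\text{-type vectors}\right)$, i.e.\ $\left(f_{M_p^{\ell}}(0)\right)_{0\leq p\leq k-1,\,1\leq\ell\leq r}$ suitably combined, is linearly independent --- and this independence is itself a consequence of the definiteness of $q$ together with the vanishing hypotheses (this is exactly \cite[Lemma 3.12]{gherdaoui2}, mirrored in the PDE setting by Lemma \ref{liberte} and Proposition \ref{closed-lopp}). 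Inverting the corresponding matrix yields $\sum_{p,\ell}|u_p^{\ell}(T)| = \mathcal{O}\left(\|u_1\|_{L^2}^2+\sqrt{T}\|u_k\|_{L^2}+\|x(T;u)\|\right)$. Consequently the correct drift inequality is not $\mathbb{P}(x(T;u))\geq 0$ but
\begin{equation*}
\pm\,\mathbb{P}\bigl(x(T;u)\bigr)\geq C\left\|u_k\right\|_{L^2(0,T)}^2-C\left\|x(T;u)\right\|^2,
\end{equation*}
and the denial of STLC requires the extra step of the Remark following Theorem \ref{maintheorem2}: aiming at $x_f=-\delta e$ with $\mathbb{P}(e)>0$ forces $-\delta\geq -C\delta^2$, impossible as $\delta\to0$. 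Without the linear-independence lemma and the closed-loop estimate, the boundary terms are unaccounted for and the argument does not close.
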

	\begin{rmq}
		Using Theorem \ref{dimfinie}, \eqref{jouetex} can be studied again with $r=2$ and $k=1$. 	One has $[f_1,f_2](0)=0$. Then, for every $l\in\nn$, $f_{C^{1,2}_{0,l}}(0)=0$. Thus, $\mathcal{N}_k=\spn\left(e_1,e_2,e_3\right).$
		Finally, one has $f_{W_{1,0}^1}(0)=2e_4$, $f_{W_{1,0}^2}(0)=4e_4$ and $f_{C_{1,0}^{1,2}}(0)=\frac{1}{2}e_4$. Then, the linear form $\mathbb{P}:=\langle \cdot,e_4\rangle$ is suitable because, for every $(x,y)\in\rr^2$, 
		$$x^2+\frac{1}{2}xy+2y^2\geq\frac{3}{4}x^2+\frac{7}{4}y^2,$$
		so the associated quadratic form is positive-definitive on $\rr^2$.
	\end{rmq}
	\medskip
	Let us make the link between Theorems \ref{maintheorem} and \ref{dimfinie}. When $\ell<L$, the  Lie brackets $\gamma_p^{\ell,L}$ are specific to the multi-input system \eqref{schr} with $r>1$, they can be used to recover a complex lost direction at linear order -- see \cite{gherdaoui} with $r=2$.
	They can be considered as good in a STLC point of view. \textit{A contrario}, the Lie brackets $\gamma_{2k-1}^{\ell,\ell}$ are known to be potential obstructions to STLC for the bilinear Schrödinger equation in the single-input case, in an appropriate functional setting -- see \cite{bournissou2021quad} with $r=1$. 
	
	\medskip
	
	We now give an interpretation of the series $\gamma_{p}^{\ell,L}$ in terms of Lie brackets. For $A$ and $B$ two operators, we define $\underline{\ad}^l_A(B)$ as $\underline{\ad}_A^0(B):=B$ and $\underline{\ad}_A^{l+1}(B):=\underline{\ad}^l_A(B)A-A\underline{\ad}^l_A(B)$. Under appropriate assumptions  on the functions $\mu_{\ell}$ for domain compatibility (a finite number of derivatives of odd order have to vanish on the boundary) 
	the assumption $\mathbf{(H)_{null}}$ can be interpreted as Lie brackets because 
	\begin{equation}\label{sommecrochet}  \forall 0\leq p\leq 2k-1,\  1\leq\ell\leq L\leq r, \quad \gamma_p^{\ell,L}=(-1)^p\left\langle[\underline{\ad}_{A}^{\lfloor\frac{p+1}{2}\rfloor}(B_{\ell}),\underline{\ad}_{A}^{\lfloor\frac{p}{2}\rfloor}(B_L)]\ph_1,\ph_K\right\rangle,\end{equation} where $A$ is defined in \eqref{laplacien} and $B_{\ell}$ is the multiplication operator by $\mu_{\ell}$ in $L^2(0,1)$.
	We refer to \cite[Propositions  A.$7$, A.$8$, A.$10$ and A.$11$]{gherdaoui} for a precise proof.
	
	\medskip
	
	Using the interpretation $\gamma_p^{\ell,L}$ in terms of Lie brackets given by \eqref{sommecrochet}, we can recognize in the expansion \eqref{heurisart3termdom} the leading terms of the Magnus-type representation formula, that is used to prove Theorem \ref{dimfinie}. See \cite{Beauchard_2023} for more details.
	
	\medskip
	
	Assume that $\mu\in\mathcal{C}^{\infty}_c(0,1)^r$ are smooth functions. Heuristically, we can think of the bilinear Schrödinger equation \eqref{schr} as a control-affine system on the form \eqref{affine-syst} with $f_0=A$, where the operator $A$ is defined in \eqref{laplacien} and $f_{\ell}=B_{\ell}$ with $1\leq\ell\leq r$, where $B_{\ell}$ is the multiplication operator by $\mu_{\ell}$ in $L^2(0,1)$. The equilibrium is no longer $0$ but the free trajectory $\psi(T;0,\ph_1)=\ph_1e^{-i\lambda_1T}$.
	In this situation, the linear form is $\mathbb{P}:=\langle\cdot,\ph_K\rangle$. The hypotheses are the same in the finite and infinite-dimensional cases. 
	\begin{enumerate}
		\item The equality \eqref{sommecrochet}, \cite[Lemma A.$9$]{gherdaoui} and the assumptions $\mathbf{(H)_{lin}}$ and $\mathbf{(H)_{null}}$ give
		\begin{enumerate}
			\item[] $\forall (\ell,l)\in\llbracket 1,r\rrbracket\times\nn$,
			$$\mathbb{P}\left( \underline{\ad}^l_A(B_{\ell})\ph_1\right)=\left\langle \underline{\ad}^l_A(B_{\ell})\ph_1,\ph_K\right\rangle=(\lambda_1-\lambda_K)^l\langle\mu_{\ell}\ph_1,\ph_K\rangle =0,$$
			\item[] $\forall(\ell,L)\in\llbracket 1,r\rrbracket^2$ such that $\ell\leq L$, $\forall(p,l)\in\llbracket 0,2k-2\rrbracket\times\nn,$  $$\mathbb{P}\left(\underline{\ad}_A^l\left([\underline{\ad}_A^{\lfloor\frac{p+1}{2}\rfloor}(B_{\ell}),\underline{\ad}_A^{{\lfloor\frac{p}{2}\rfloor}}(B_L)]\right)\ph_1\right)=(\lambda_1-\lambda_K)^l(-1)^p\gamma_p^{\ell,L}=0.$$
		\end{enumerate}
		This is the equivalent of the condition $\restriction{\mathbb{P}}{\mathcal{N}_k}\equiv 0$.
		\item The equality \eqref{sommecrochet} and the hypothesis $\mathbf{(H)_{pos}}$ ensure that the hypotheses about the quadratic forms are the same.
	\end{enumerate}
	\begin{rmq}
		The functional setting is different. In Theorem  \ref{dimfinie}, we deny $H^{2k}$-STLC, whereas in Theorem \ref{maintheorem}, we disprove the $H^{2k-3}$-STLC. These choices of functional settings are determined through interpolation theory by the competition between the negative Sobolev norm quantifying the quadratic drift and the size of the cubic remainder. For \eqref{schr}, we obtain a very good error estimate involving a Lebesgue norm of the cube of the control's primitive -- see Proposition \ref{eqrefformfaibleprimcubrest}.
		While classical for scalar-input systems -- see \cite[Section 7.5]{Beauchard_2023} -- such an estimate fails in general for multi-input systems, even for ODEs -- see \cite[Section 7.5]{Beauchard_2023}. The key point is that, for \eqref{schr}, the operators $B_\ell$ commute.Taking advantage of this property allows us to obtain an error estimate similar to the scalar-input case by using an auxiliary system.
	\end{rmq}
	\subsection{State of the art}\label{stateofart}
	\subsubsection{Topological obstructions to exact controllability}
	In \cite{doi:10.1137/0320042}, Ball, Marsden and Slemrod proved obstructions to local exact controllability of linear PDEs with bilinear controls.
	For instance, if the multiplicative operators $\mu_{\ell}$ are bounded on $H^s_{(0)}(0,1)$, 
	then system \eqref{schr} is not exactly controllable in $\mathcal{S}\cap H^s_{(0)}(0,1)$, with controls 
	$u\in L^p_{\text{loc}}(\rr,\rr)^r$ and $p>1$.
	The fundamental reason behind is that, under these assumptions, the reachable set has empty interior in $H^{s}_{(0)}(0,1)$.
	The case of $L^1_{\text{loc}}$-controls ($p=1$) was incorporated in \cite{BOUSSAID2020108412} and extensions to nonlinear equations were proved in \cite{doi:10.1137/18M1215207,chambrion:hal-01901819}.
	Turicini adapted this statement to Schrödinger equations in \cite{turinici:hal-00536518}. After the important work \cite{doi:10.1137/0320042}, different notions of controllability were studied for the single-input bilinear Schrödinger equation such as
	\begin{itemize}\vspace{-0.2 cm}
		\item[-] \textit{exact controllability} in more regular spaces, on which the $\mu_{\ell}$ do not define bounded operators -- see \cite{beauchard2010local,bournissou2022smalltime,gherdaoui},\vspace{-0.2 cm}
		\item[-] \textit{approximate controllability}.
	\end{itemize}
	\subsubsection{Exact controllability in more regular spaces, by linear test} 
	For the single-input bilinear Schrödinger equation, local exact controllability was first proved in \cite{BEAUCHARD2005851,BEAUCHARD2006328} with Nash-Moser techniques, 
	to deal with an apparent derivative loss problem and then in \cite{beauchard2010local} with a classical inverse mapping theorem, 
	thanks to a regularizing effect. By grafting other ingredients onto this core strategy, global (resp.\ local) exact controllability in regular spaces was proved for different models
	in \cite{morancey2013globalexactcontrollability1d,article} (resp.\ \cite{bournissou2021local}). This strategy has also been used to obtain local controllability results for nonlinear Schrödinger equations, as in \cite{duca2022local}, or for coupled Schrödinger equations, as in \cite{AIHPC_2014__31_3_501_0}.
	\subsubsection{Power series expansion of order 2 or 3} 
	First, let us look at the case of \textbf{single-input systems}. A power series expansion of order $2$ allows to recover a lost direction \textit{in large time}: this strategy is used in \cite{beauchard2013local} for the single-input bilinear Schrödinger equation. This method is also used for other equations, such as KdV, in \cite{Cerpa2009}. If the order $2$ vanishes, a power series of order $3$ can be used to recover the \textit{small-time} local controllability, for example in \cite{Coron2004}, for KdV. If the order $2$ doesn't cancel out, but the term of order $3$ is strong enough, this expansion can also give the \textit{small-time} local controllability -- see \cite{bournissou2022smalltime}, for the single-input bilinear Schrödinger equation.
	\par
	In the context of \textbf{multi-input systems}, we use a power series expansion of order $2$ in \cite{gherdaoui} to recover in \textit{small-time} a direction that is lost at the linear order. The purpose of this article is to give other algebraic assumptions to prove obstructions for STLC.
	
	\subsubsection{Quadratic obstructions to STLC} 
	In \cite{CORON2006103}, Coron denied the $L^{\infty}$-small-time local controllability result for the single-input bilinear Schrödinger equation, with a particular dipolar moment $\mu_{\ell}$, thanks to a drift. In \cite{beauchard2013local}, Beauchard and Morancey gave general assumptions on $\mu_1$ to deny $L^{\infty}$-STLC for the same system. In \cite{bournissou2021quad}, Bournissou proved that this drift also occurs with small control in $W^{-1,\infty}$. In \cite{beauchard2025obstructionsmalltimelocalcontrollability}, Beauchard, Marbach and Perrin proved an obtruction for the single-input bilinear Schrödinger equation with Neumann boundary conditions, using a power series of order $2$. Quadratic terms have also been used to create obstructions to the controllability of other single-input systems, for example in \cite{beauchard2017quadratic} for ODEs, in \cite{Fr_d_ric_Marbach_2018} for the Burgers' equation, \cite{BEAUCHARD202022} for the heat equation, \cite{coron2020smalltime} for KdV and \cite{nguyen2023localcontrollabilitykortewegdevries} for KdV-Neumann. 
	With the exception of a non-physical PDE designed for, \cite[Section $5$ and $6$]{BEAUCHARD202022}
	for single-input systems, the quadratic terms generally do not recover small-time controllability.
	\par
	In \cite{gherdaoui2}, we prove quadratic obstructions for multi-input control-affine systems in the finite-dimensional case (ODEs). Our strategy is to adapt \cite[Theorem $1.15$]{gherdaoui2} for the multi-input bilinear Schrödinger equation. The strong analogy has already been developed in Section \ref{lienode}.
	\subsubsection{Approximate controllability} 
	The first results of global approximate controllability of bilinear Schrödinger equations 
	were obtained in large time -- see \cite{Boscain_2012,Chambrion2008ControllabilityOT,ervedozapuel,nersesyan,Sigalotti2009GenericCP}.
	For particular systems, a large time is indeed necessary for the approximate controllability -- see \cite{beauchard2014minimaltimebilinearcontrol,articletkc}. Small-time approximate controllability between eigenstates for Schrödinger equations on the torus is proved by Duca and Nersesyan in \cite{duca2021bilinearcontrolgrowthsobolev},
	by means of an infinite-dimensional geometric control approach (saturation argument).
	Related results have been subsequently established in \cite{boscain:hal-04496433,chambrion2022smalltime,duca2024smalltimecontrollabilitynonlinearschrodinger}. Recently, Beauchard and Pozzoli provided the first examples of small-time globally approximately controllable bilinear Schrödinger equations in \cite{beauchard2024examplesmalltimegloballyapproximately}.
	\section{Proof of the main theorem}\label{proofmain}
	One recalls that $\lambda_j$ and $\ph_j$ are defined in \eqref{vp}. In all the document, for $j\in\nn^*$, we will note		\begin{equation}\label{omeganu}\omega_j:=\lambda_j-\lambda_1\quad\text{ and }\quad \nu_j:=\lambda_K-\lambda_j.\end{equation}
	\subsection{Expansion of the solution}
	We are going to make an asymptotic expansion of the solution to \eqref{schr}. Let $u\in L^2((0,T),\rr)^r$ be fixed controls.
	The first-order term $\Psi\in\mathcal{C}^0\left([0,T],H^3_{(0)}(0,1)\right)$ is the solution to the linearized system of \eqref{schr} around the free trajectory $(\psi_1,u\equiv 0)$, \textit{i.e.}\
	\begin{equation}\hspace{-0.45 cm}\label{schro:lin}\left\lbrace\begin{array}{lr} i\partial_t\Psi(t,x)=-\partial_x^2\Psi(t,x)-u(t)\cdot\mu(x)\psi_1(t,x), & t\in(0,T),x\in(0,1),\\
			\Psi(t,0)=\Psi(t,1)=0, & t\in(0,T),\\
			\Psi(0,x)=0, &x\in(0,1).\end{array}\right.
	\end{equation}
	The solution is given by: $\forall t\in [0,T]$, 
	\begin{equation}\label{schro:lin:exp} \Psi(t)=i\sum_{\ell=1}^r\sum_{j=1}^{+\infty}\left(\left\langle\mu_{\ell}\ph_1,\ph_j\right\rangle\int_0^tu^{\ell}(\tau)e^{i\omega_j\tau}\mathrm{d}\tau\right)\psi_j(t).
	\end{equation}
	We expand the development of the solution to the quadratic term. The second-order term $\xi\in\mathcal{C}^0\left([0,T],H^3_{(0)}(0,1)\right)$ is the solution to the following system
	\begin{equation}\label{schr:quad:exp}\left\lbrace\begin{array}{lr} i\partial_t\xi(t,x)=-\partial_x^2\xi(t,x)-u(t)\cdot\mu(x)\Psi(t,x),& t\in(0,T),x\in(0,1),\\
			\xi(t,0)=\xi(t,1)=0, & t\in(0,T),\\
			\xi(0,x)=0, &x\in(0,1).\end{array}\right.
	\end{equation}
	The idea is that $\psi(T;u,\ph_1)\simeq \psi_1(T)+\Psi(T)+\xi(T).$ Thus, 
	\begin{equation}\label{err-heuristique}\Im\left\langle\psi(T;u,\ph_1),\ph_Ke^{-i\lambda_1T}\right\rangle\simeq0+0+\Im\left\langle\xi(T),\ph_Ke^{-i\lambda_1T}\right\rangle,\end{equation} the first term being $0$ since it is real and the second one by hypothesis $\mathbf{(H)_{lin}}$.
	For $1\leq\ell,L\leq r$, one defines
	$	H_{\ell,L}:(t,s)\in[0,T]^2\mapsto-e^{-i\omega_KT}\displaystyle\sum_{j=1}^{+\infty}c_j^{\ell,L} e^{i(\nu_jt+\omega_js)}\in\cc.$
	\begin{rmq}
		The kernels introduced in this article differ by a phase $e^{-i\omega_KT}$, compared with those introduced in \cite{gherdaoui} in the case where $r=2$.
	\end{rmq}
	We finally use the notation, for $f,g\in L^2((0,T),\rr)$, $1\leq \ell,L\leq r$,
	\begin{equation*}\mathcal{F}_T^{\ell,L}(f,g):=\int_0^Tf(t)\left(\int_0^tH_{\ell,L}(t,\tau)g(\tau)\mathrm{d}\tau\right)\dt.
	\end{equation*}
	With these notations and \eqref{schro:lin:exp},
	\begin{equation}\label{schro:quad}
		\left\langle\xi(T),\ph_Ke^{-i\lambda_1T}\right\rangle=\sum_{\ell=1}^r\mathcal{F}_T^{\ell,\ell}(u^{\ell},u^{\ell})+\sum_{1\leq \ell<L\leq r}\left(\mathcal{F}_T^{\ell,L}(u^{\ell},u^L)+\mathcal{F}_T^{L,\ell}(u^L,u^{\ell})\right).
	\end{equation}
	\subsection{Error estimates}
	
	We recall that the iterated primitives of a function are defined in \eqref{primitive}. We use sharp error estimates to prove that the remainder term of the expansion \eqref{err-heuristique} can be neglected compared to the drift $\left\|u_k\right\|_{L^2(0,T)}^2$.  A rough error estimate -- see for example \cite[Lemma 3.7]{gherdaoui} -- involves the $L^2$-norm of the controls. As shown in \cite{bournissou2021quad,bournissou2022smalltime} in the single-input case, one can prove a better estimate involving the $L^2$-norm of the primitive $u_1$ of the controls by introducing the new state
	$$\widetilde{\psi}((t,x);u,\ph_1):=\psi((t,x);u,\ph_1)e^{-iu_1(t)\cdot\mu(x)}.$$
	which is the weak solution to
	\begin{equation}\label{syst-aux}\left\lbrace\begin{array}{lr} i\partial_t\widetilde{\psi}=-\partial_x^2\widetilde{\psi}-i\left[u_1(t)\cdot\mu''(x)+2\left(u_1(t)\cdot\mu'(x)\right)\partial_x\right]\widetilde{\psi}+\left(u_1(t)\cdot\mu'(x)\right)^2\widetilde{\psi},\\
			\widetilde{\psi}(t,0)=\widetilde{\psi}(t,1)=0,\\
			\widetilde{\psi}(0,x)=\ph_1(x).\end{array}\right.
	\end{equation}
	
	The well-posedness of the auxiliary system \eqref{syst-aux} is stated in \cite[Proposition $4.2$]{bournissou2021quad} in the case where $r=1$ and can be adapted, without any additionnal difficulty, to our setting. We want to study the linear and quadratic expansions of \eqref{syst-aux} around the ground state. Linearizing \eqref{syst-aux}, the first-order term $\widetilde{\Psi}$ is given by
	\begin{equation}
		\label{linaux}
		\widetilde{\Psi}(t,x)=\Psi(t,x) - iu_1(t)\cdot\mu(x) \psi_1(t,x),
	\end{equation}
	where $\Psi$ is the solution to \eqref{schro:lin}. Thus, $\widetilde{\Psi}\in \mathcal{C}^0([0,T], H^3 \cap H^1_0(0,1))$ is a weak solution to
	\begin{equation*}   \left\{ 
		\begin{array}{lr}
			i \partial_t \widetilde{\Psi} = - \partial^2_x \widetilde{\Psi} -i\left[u_1(t)\cdot\mu''(x)+2\left(u_1(t)\cdot\mu'(x)\right)\partial_x\right]\psi_1,& (0,T)\times(0,1), \\
			\widetilde{\Psi}(t,0) = \widetilde{\Psi}(t,1)=0,& (0,T),\\
			\widetilde{\Psi}(0,x)=0,& (0,1).
		\end{array}\right. \end{equation*}
	Doing an expansion of order $2$ of \eqref{syst-aux}, the second-order term $\widetilde{\xi}$ is given by
	\begin{equation}
		\label{defauxqua}
		\widetilde{\xi}(t,x)= \xi(t,x) - iu_1(t)\cdot\mu(x) \Psi(t,x) - \frac{\left(u_1(t)\cdot\mu(x)\right)^2}{2} \psi_1(t,x),
	\end{equation}
	where $\xi$ is the solution to \eqref{schr:quad:exp}. Then, $\widetilde{\xi}\in \mathcal{C}^0([0,T], H^3 \cap H^1_0(0,1))$ is a weak solution to
	\begin{equation*}   \left\{ \begin{array}{lr}
			i \partial_t \widetilde{\xi} = - \partial^2_x \widetilde{\xi}-i\left[u_1(t)\cdot\mu''(x)+2\left(u_1(t)\cdot\mu'(x)\right)\partial_x\right]\widetilde{\Psi}+\left(u_1(t)\cdot\mu'(x)\right)^2\psi_1, \\
			\widetilde{\xi}(t,0) = \widetilde{\xi}(t,1)=0,\\
			\widetilde{\xi}(0,x)=0.
		\end{array}\right. \end{equation*}
	In \cite[Lemma 4.6, Propositions 4.7 and 4.8]{bournissou2021quad}, Bournissou proved estimates for the auxiliary system when $r=1$. These results can be adapted, without any additionnal difficulty, to our case to our case, giving the following proposition.
	\begin{prop}\label{eqrefformfaibleprimcubrest} For every $T >0$, $u\in L^2((0,T),\rr)^r$, $\mu$ satisfying $\mathbf{(H)_{conv}}$, $\mathbf{(H)_{reg}}$ and $p\in\nn^*$ one has, as $\left\|u_1\right\|_{L^{\infty}(0,T)}\to0$,
		\begin{align*}
			\left\langle  e^{i u_1(T)\cdot\mu}\left(\widetilde{\psi}(\cdot;u,\ph_1) - \psi_1 -\widetilde{\Psi}\right)(T), \varphi_p\right\rangle&=\mathcal{O} \left(\left\|u_1\right\|^2_{L^2(0,T)}\right),\\
			\left\langle e^{i u_1(T)\cdot\mu}\left(\widetilde{\psi}(\cdot;u,\ph_1) - \psi_1 -\widetilde{\Psi} - \widetilde{\xi}\right)(T), \varphi_p\right\rangle&=\mathcal{O}\left(\left\|u_1\right\|^3_{L^2(0,T)}\right). 
		\end{align*}
	\end{prop}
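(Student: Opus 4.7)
The plan is to adapt \cite[Lemma 4.6, Propositions 4.7 and 4.8]{bournissou2021quad} essentially verbatim to the multi-input setting, using the fact that in the auxiliary system \eqref{syst-aux} the controls appear only through their primitive $u_1$ (and not through $u$ itself). This structural property is what will eventually yield bounds in powers of $\|u_1\|_{L^2(0,T)}$ rather than $\|u\|_{L^2(0,T)}$.

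First I would derive the PDEs satisfied by the remainders
$$R_1:=\widetilde\psi-\psi_1-\widetilde\Psi\qquad\text{and}\qquad R_2:=R_1-\widetilde\xi.$$
Subtracting \eqref{linaux} and \eqref{defauxqua} from the auxiliary equation \eqref{syst-aux}, one finds that $R_1$ solves a Schrödinger equation on $(0,1)$ with Dirichlet boundary data, zero initial data, and source
$$-i\bigl[u_1(t)\!\cdot\!\mu''(x)+2\bigl(u_1(t)\!\cdot\!\mu'(x)\bigr)\partial_x\bigr]\bigl(\widetilde\Psi+R_1\bigr)+\bigl(u_1(t)\!\cdot\!\mu'(x)\bigr)^2\widetilde\psi,$$
and $R_2$ solves an analogous equation whose source is one order deeper in $u_1$. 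I would then write $\langle R_j(T),\varphi_p\rangle$ by Duhamel against $\varphi_p e^{-i\lambda_p(T-t)}$, distributing $-\partial_x^2$ onto $\varphi_p$ via two integrations by parts in $x$ (the boundary terms vanish by Dirichlet conditions and the regularity provided by $\mathbf{(H)_{reg}}$).

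Second, I would perform the crucial integration by parts in time in the transport-type contribution $\int_0^T\bigl(u_1(t)\!\cdot\!\mu'(x)\bigr)\,\partial_x[\cdots]\,\mathrm dt$: since the time derivative of $u_1$ is the honest control $u$, moving it onto the Schrödinger state and exploiting the equation for that state trades $u$ against $u_1$, at the cost of uniformly bounded quantities controlled by the $H^3_{(0)}$-well-posedness estimates for $\widetilde\psi,\widetilde\Psi,\widetilde\xi$. Applying Cauchy--Schwarz on the resulting time integrals yields the bounds $\mathcal O(\|u_1\|_{L^2}^2)$ and $\mathcal O(\|u_1\|_{L^2}^3)$. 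The multiplication by $e^{iu_1(T)\cdot\mu}$ on the outside is purely cosmetic: it converts the estimate back to the original state $\psi$, or, equivalently, one may keep working with $\widetilde\psi$ until the very end.

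The passage from $r=1$ to general $r$ introduces no real difficulty because every occurrence of the scalar $u_1\mu$ (resp.\ $u_1\mu'$, $u_1\mu''$) is simply replaced by the finite sum $u_1(t)\cdot\mu(x)=\sum_{\ell=1}^r u_1^\ell(t)\mu_\ell(x)$; the bilinear term $(u_1\cdot\mu')^2$ decomposes as $\sum_{\ell,L}u_1^\ell u_1^L\mu_\ell'\mu_L'$, and each summand is estimated exactly as in the scalar case, using $\mathbf{(H)_{reg}}$ componentwise. The main — and only — technical point, already the heart of Bournissou's single-input argument, is the time integration by parts: it must be organized so that each occurrence of the raw control $u$ is absorbed into either a primitive $u_1$ or a Schrödinger state evaluated at the endpoints, so that the final bound never involves $\|u\|_{L^2}$ but only $\|u_1\|_{L^2}$.
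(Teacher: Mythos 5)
Your proposal takes essentially the same route as the paper, which simply invokes Bournissou's single-input estimates for the auxiliary system (\cite[Lemma 4.6, Propositions 4.7 and 4.8]{bournissou2021quad}) and observes that the passage to several inputs is immediate once $u_1(t)\cdot\mu(x)$ replaces the scalar $u_1\mu$ — your decomposition into the remainders $R_1$, $R_2$, the Duhamel representation against $\varphi_p$, and the componentwise treatment of $(u_1\cdot\mu')^2$ are exactly the intended adaptation. One small correction: no further integration by parts in time is needed at this stage, since the raw control $u$ has already been eliminated by the change of unknown $\widetilde{\psi}=\psi e^{-iu_1\cdot\mu}$ (this is where the commutation of the multiplication operators $B_\ell$ is used), so the sources of the remainder equations involve only $u_1$ and the estimates follow from Duhamel, the a priori bounds on $\widetilde{\Psi}$, $\widetilde{\xi}$ and $\widetilde{\psi}-\psi_1$, and Cauchy--Schwarz.
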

	
	From these estimates, proved for the auxiliary system, we derive the following error estimates for the bilinear Schrödinger equation \eqref{schr}. This result can be adapted from \cite[Proposition $4.9$]{bournissou2021quad} in the multi-input case.
	\begin{prop} For every $T>0$, $u\in L^2((0,T),\rr)^r$, $\mu$ satisfying $\mathbf{(H)_{conv}}$, $\mathbf{(H)_{reg}}$  and $p \in \nn^*$, the following error estimates hold, as $\left\|u_1\right\|_{L^{\infty}(0,T)}\to 0$,
		\begin{align}
			\label{remainderquadu1}	\left\langle \left(\psi(\cdot;u,\ph_1) - \psi_1 -\Psi\right)(T),\varphi_p\right\rangle &= \mathcal{O}\left(\left\|u_1\right\|^2_{L^2(0,T)}+|u_1(T)|^2\right),\\\label{remaindercubdu1}	
			\left\langle\left(\psi(\cdot;u,\ph_1) - \psi_1 -\Psi - \xi\right)(T), \varphi_p\right\rangle &=\mathcal{O}\left(\left\|u_1\right\|^3_{L^2(0,T)}+|u_1(T)|^3\right). 
		\end{align}
	\end{prop}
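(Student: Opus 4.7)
My plan is to transfer the auxiliary-system estimates of Proposition \ref{eqrefformfaibleprimcubrest} back to the physical state $\psi$ via an algebraic decomposition at time $t=T$. Writing $g(x):=u_1(T)\cdot\mu(x)$, the gauge relation gives $\psi(T) = e^{ig}\widetilde{\psi}(T)$ pointwise in $x$, while \eqref{linaux} and \eqref{defauxqua} evaluated at $t=T$ yield $\Psi(T) = \widetilde{\Psi}(T) + ig\,\psi_1(T)$ and $\xi(T) = \widetilde{\xi}(T) + ig\,\widetilde{\Psi}(T) - \tfrac{g^2}{2}\,\psi_1(T)$. Substituting these relations produces the identities, valid at time $T$,
\begin{align*}
\psi - \psi_1 - \Psi &= e^{ig}\bigl(\widetilde{\psi} - \psi_1 - \widetilde{\Psi}\bigr) + (e^{ig}-1-ig)\,\psi_1 + (e^{ig}-1)\,\widetilde{\Psi},\\
\psi - \psi_1 - \Psi - \xi &= e^{ig}\bigl(\widetilde{\psi} - \psi_1 - \widetilde{\Psi} - \widetilde{\xi}\bigr) + \bigl(e^{ig}-1-ig+\tfrac{g^2}{2}\bigr)\psi_1 + (e^{ig}-1-ig)\,\widetilde{\Psi} + (e^{ig}-1)\,\widetilde{\xi},
\end{align*}
on which the whole argument will rest.

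Next, I take the Hermitian scalar product with $\varphi_p$ and estimate termwise. The leading bracket on each right-hand side is exactly what Proposition \ref{eqrefformfaibleprimcubrest} controls, giving $O(\|u_1\|_{L^2}^2)$ and $O(\|u_1\|_{L^2}^3)$ respectively. The correction terms are bounded via the pointwise Taylor estimates $|e^{ig}-1|\leq|g|$, $|e^{ig}-1-ig|\leq g^2/2$ and $|e^{ig}-1-ig+g^2/2|\leq|g|^3/6$, together with $\|g\|_{L^\infty(0,1)}\leq C|u_1(T)|$ (from $\mathbf{(H)_{reg}}$ and Sobolev embedding), and with the auxiliary size estimates $\|\widetilde{\Psi}(T)\|_{L^2} = O(\|u_1\|_{L^2})$, $\|\widetilde{\xi}(T)\|_{L^2} = O(\|u_1\|_{L^2}^2)$. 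Cauchy--Schwarz then produces correction terms of the form $|u_1(T)|^a\|u_1\|_{L^2}^b$ with $a+b=2$ for the first identity and $a+b=3$ for the second, and a standard application of Young's inequality absorbs the mixed exponents into the announced bound $|u_1(T)|^n + \|u_1\|_{L^2}^n$.

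The required size estimates on $\widetilde{\Psi}(T)$ and $\widetilde{\xi}(T)$ are obtained by applying Duhamel's formula to the corresponding auxiliary systems displayed just before Proposition \ref{eqrefformfaibleprimcubrest}; their sources depend on $u_1$ (no longer on $u$ itself), linearly for $\widetilde{\Psi}$ and bilinearly in $(u_1,\widetilde{\Psi})$ plus a $u_1^2$-term for $\widetilde{\xi}$, so that the linear bound feeds into the quadratic one. This is the multi-input analogue of \cite[Proposition $4.9$]{bournissou2021quad}, and the passage is formal since the multiplication operators $B_\ell$ commute and act pointwise in $x$. The main obstacle is pure bookkeeping: the algebraic identities above must be written carefully so that every residual term lies in one of a handful of templates controllable either by Proposition \ref{eqrefformfaibleprimcubrest} or by an elementary Taylor bound. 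The only analytically nontrivial ingredient is the pair of auxiliary size estimates, which exploits in an essential way the derivative-loss-free structure engineered by the gauge transform $\widetilde{\psi} = e^{-iu_1\cdot\mu}\psi$.
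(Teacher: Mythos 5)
Your proposal is correct and follows exactly the route the paper intends: the paper gives no written proof of this proposition, delegating it to the adaptation of \cite[Proposition $4.9$]{bournissou2021quad}, whose argument is precisely your gauge-undoing decomposition $\psi(T)=e^{ig}\widetilde{\psi}(T)$ combined with the relations \eqref{linaux}--\eqref{defauxqua}, the Taylor bounds on $e^{ig}$, and the $L^2$ size estimates on $\widetilde{\Psi}(T)$ and $\widetilde{\xi}(T)$ fed into Proposition \ref{eqrefformfaibleprimcubrest}. Your two algebraic identities check out, and the Young-inequality absorption of the mixed terms $|u_1(T)|^a\left\|u_1\right\|_{L^2}^b$ into $|u_1(T)|^n+\left\|u_1\right\|_{L^2}^n$ is the standard closing step.
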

	The final step is to show that the boundary terms  $(u^1_1(T),\cdots,u_1^r(T))$  can be neglected, as they are part of the dynamics. Unlike the previous steps, this one does not follow directly from Bournissou's work \cite[Proposition 4.19]{bournissou2021quad}, which was done for $r=1$ ; therefore, we will explain the details. We recall that the sequences $c^{\ell,L}$ are introduced in Definition \ref{defcj} and the quantities $\omega_j$ and $\nu_j$ are defined in \eqref{omeganu}. 
	\begin{prop}\label{proprestraff} For every $T>0$, $u\in L^2((0,T),\rr)^r$, $\mu$ satisfying $\mathbf{(H)_{conv}}$, $\mathbf{(H)_{reg}}$, $\mathbf{(H)_{null}}$ and $\mathbf{(H)_{pos}}$, the following error estimate holds, as $\left\|u_1\right\|_{L^{\infty}(0,T)}\to 0$,
		\begin{equation}\label{termrestefin}
			\sum_{\ell=1}^r\left|u^{\ell}_1(T)\right|=\mathcal{O}\left(\sqrt{T}\left\|u_1\right\|_{L^2(0,T)}+\left\|\psi(T;u,\ph_1)-\psi_1(T)\right\|_{L^2(0,1)}\right).
		\end{equation}
	\end{prop}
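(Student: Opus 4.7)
The plan is to extract each $u_1^\ell(T)$ from $r$ well-chosen scalar projections of the state error $\psi(T;u,\ph_1)-\psi_1(T)$ onto Dirichlet eigenfunctions, and then invert the resulting $r\times r$ linear system. The non-degeneracy afforded jointly by $\mathbf{(H)_{null}}$ and $\mathbf{(H)_{pos}}$ is what guarantees such an inversion is possible.

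First, for an arbitrary $j\in\nn^*$, I read off from \eqref{schro:lin:exp} that
\[
e^{i\lambda_j T}\left\langle\Psi(T),\ph_j\right\rangle = i\sum_{\ell=1}^r\langle\mu_\ell\ph_1,\ph_j\rangle\int_0^T u^\ell(\tau)e^{i\omega_j\tau}\dd\tau,
\]
and integrate by parts using $u_1^\ell(0)=0$ to obtain $\int_0^T u^\ell(\tau)e^{i\omega_j\tau}\dd\tau = u_1^\ell(T)e^{i\omega_j T}-i\omega_j\int_0^T u_1^\ell(\tau)e^{i\omega_j\tau}\dd\tau$; Cauchy--Schwarz bounds the remaining integral by $\sqrt{T}\|u_1^\ell\|_{L^2}$, whence
\[
e^{i\lambda_1T}\langle\Psi(T),\ph_j\rangle = i\sum_\ell\langle\mu_\ell\ph_1,\ph_j\rangle u_1^\ell(T) + \mathcal{O}\!\left(|\omega_j|\sqrt{T}\|u_1\|_{L^2}\right).
\]
Estimate \eqref{remainderquadu1} then replaces $\langle\Psi(T),\ph_j\rangle$ by $\langle\psi(T;u,\ph_1)-\psi_1(T),\ph_j\rangle$ at the cost of an additional $\mathcal{O}(\|u_1\|_{L^2}^2+|u_1(T)|^2)$.

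The key step is the following linear-algebraic input: the vectors $v_j:=(\langle\mu_\ell\ph_1,\ph_j\rangle)_{\ell=1}^r$, $j\in\nn^*\setminus\{K\}$, span $\rr^r$, so $r$ indices $j_1,\dots,j_r$ can be chosen making the matrix $M:=(\langle\mu_\ell\ph_1,\ph_{j_m}\rangle)_{m,\ell}$ invertible. Were this false, some $\alpha\in\rr^r\setminus\{0\}$ would be orthogonal to every $v_j$; together with $\mathbf{(H)_{lin}}$ (which handles $j=K$) this forces $\sum_\ell\alpha_\ell\mu_\ell\ph_1\equiv 0$ and, since $\ph_1>0$ on $(0,1)$, also $\sum_\ell\alpha_\ell\mu_\ell\equiv 0$. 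Using the factorization $c_j^{\ell,L}=\langle\mu_\ell\ph_K,\ph_j\rangle\langle\mu_L\ph_1,\ph_j\rangle$, both double-sums $\sum_{\ell,L}\alpha_\ell\alpha_L c_j^{\ell,L}$ and $\sum_{\ell,L}\alpha_\ell\alpha_L c_j^{L,\ell}$ vanish for every $j$, hence $\sum_{\ell,L}\alpha_\ell\alpha_L\gamma_{2k-1}^{\ell,L}=0$ where the sum is extended to all $(\ell,L)$ by the formula of Definition \ref{defquadbra}. A short computation yields $\gamma_{2k-1}^{\ell,L}-\gamma_{2k-1}^{L,\ell}=(\lambda_K-\lambda_1)\gamma_{2k-2}^{\ell,L}$, which vanishes by $\mathbf{(H)_{null}}$, so the extended double-sum equals exactly $2q(\alpha)$. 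This gives $q(\alpha)=0$, contradicting the definiteness of $q$ from $\mathbf{(H)_{pos}}$.

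Finally, applying the first two steps with $j=j_1,\dots,j_r$ and multiplying the resulting linear system by the (constant, bounded) matrix $M^{-1}$ delivers
\[
\sum_{\ell=1}^r|u_1^\ell(T)|\leq C\|\psi(T;u,\ph_1)-\psi_1(T)\|_{L^2}+C\sqrt{T}\|u_1\|_{L^2}+C\|u_1\|_{L^2}^2+C|u_1(T)|^2.
\]
In the regime $\|u_1\|_{L^\infty}\to 0$, the last two terms are absorbed: $\|u_1\|_{L^2}^2\leq\|u_1\|_{L^\infty}\sqrt{T}\|u_1\|_{L^2}=o(\sqrt{T}\|u_1\|_{L^2})$, while $|u_1(T)|^2\leq\|u_1\|_{L^\infty}\sum_\ell|u_1^\ell(T)|$ can be moved to the left-hand side. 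The main obstacle is the third step: the symmetry $\gamma_{2k-1}^{\ell,L}=\gamma_{2k-1}^{L,\ell}$ forced by $\mathbf{(H)_{null}}$ is what makes it possible to convert the asymmetric double-sum into the quadratic form $q$, and thereby to exploit its definiteness to force $\alpha=0$.
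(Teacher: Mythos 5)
Your argument is correct and follows essentially the same route as the paper's: linear independence of the vectors $\left(\langle\mu_{\ell}\ph_1,\ph_j\rangle\right)_{\ell}$ is obtained by contradiction through the vanishing of the definite form $q$, an invertible matrix $M$ is extracted, and the linearized expansion plus \eqref{remainderquadu1} is inverted, with the quadratic boundary term absorbed as $\left\|u_1\right\|_{L^{\infty}}\to0$. The only cosmetic difference is that you verify the symmetry $\gamma_{2k-1}^{\ell,L}=\gamma_{2k-1}^{L,\ell}$ under $\mathbf{(H)_{null}}$ by the direct identity $\gamma_{2k-1}^{\ell,L}-\gamma_{2k-1}^{L,\ell}=\omega_K\gamma_{2k-2}^{\ell,L}$ rather than by invoking Corollary \ref{expan-somm-11} with $p=k-1$, $\nu=1$, which amounts to the same computation.
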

	\begin{proof}
		Let $(\alpha_{\ell})_{\ell\in\llbracket 1,r\rrbracket}\in\rr^r$ be such that, for all $j\geq 1$, 
		$\displaystyle\sum_{\ell=1}^r\alpha_{\ell}\langle\mu_{\ell}\ph_1,\ph_j\rangle=0.$ Then, 
		$$\sum_{\ell,L\in\llbracket 1,r\rrbracket}C_{\ell,L}:=\sum_{\ell,L\in\llbracket 1,r\rrbracket}\alpha_{\ell}\alpha_L\sum_{j=1}^{+\infty}\left(c_j^{L,\ell}\omega_j^k\nu_j^{k-1}-c^{\ell,L}_j\omega_j^{k-1}\nu_j^k\right)=0-0.$$
		For all $1\leq\ell\leq L\leq r$,  one has by definition,
		$$C_{\ell,L}=-\alpha_{\ell}\alpha_L \gamma_{2k-1}^{\ell,L}.$$
		Moreover, if $1\leq L\leq\ell\leq r$, using Corollary \ref{expan-somm-11} with $p=k-1$ and $\nu=1$, one has
		$$C_{\ell,L}=\beta_0^1\gamma_{2k-2}^{L,\ell}\omega_K-\beta_1^1\gamma_{2k-1}^{L,\ell}=-\gamma_{2k-1}^{L,\ell},$$
		using $\mathbf{(H)_{null}}$. Thus, we obtain $q(\alpha_1,\cdots,\alpha_r)=0$. Using $\mathbf{(H)_{pos}}$, one has $(\alpha_1,\cdots,\alpha_r)=0$. Then, $\left(\left(\langle\mu_{\ell}\ph_1,\ph_j\rangle\right)_{j\geq 1}\right)_{\ell\in\llbracket 1,r\rrbracket}$ is a linearly independent family.  Consequently, there exist $j_1,\cdots,j_{r}\in\nn^*$, pairwise distinct, such that
		$M:=\left(\langle\mu_{\ell}\ph_1,\ph_{j_n}\rangle\right)_{(\ell,n)\in\llbracket 1,r\rrbracket\times\llbracket 1,r\rrbracket}\in GL_{r}(\rr).$
		Then, using the remainder estimate \eqref{remainderquadu1}, the expansion of the linearized system given by \eqref{schro:lin:exp}, an integration by parts and Cauchy--Schwarz's inequality, one finally gets, for all $j\geq 1$, as $\left\|u_1\right\|_{L^{\infty}(0,T)}\to0$,
		\begin{equation*}\begin{gathered}\left\langle\psi(T;u,\ph_1)-\psi_1(T),\ph_je^{-i\lambda_1T}\right\rangle=i\sum_{\ell\in\llbracket 1,r\rrbracket}\langle\mu_{\ell}\ph_1,\ph_j\rangle u_1^{\ell}(T)\\+\mathcal{O}\left(\left\|u_1\right\|_{L^2(0,T)}^2+|u_1(T)|^2+\sqrt{T}\left\|u_1\right\|_{L^2(0,T)}\right).\end{gathered}\end{equation*}
		Focusing on $j\in\{j_1,\cdots,j_{kr}\}$, we obtain
		\begin{equation*}\begin{gathered}M(u_1^{\ell}(T))=\mathcal{O}\left(\left\|u_1\right\|_{L^2(0,T)}^2+|u_1(T)|^2+\sqrt{T}\left\|u_1\right\|_{L^2(0,T)}+\left\|\psi(T;u,\ph_1)-\psi_1(T)\right\|_{L^2(0,1)}\right).\end{gathered}\end{equation*}
		As $M$ is invertible and $\left\|u_1\right\|_{L^{\infty}(0,T)}\to 0$, one obtains the desired result.
	\end{proof}
	Using \eqref{termrestefin} in \eqref{remainderquadu1} and \eqref{remaindercubdu1}, we finally obtain the following error estimates on the linear/quadratic expansion of the solution.
	\begin{crl} For every $T>0$, $u\in L^2((0,T),\rr)^r$, $\mu$ satisfying $\mathbf{(H)_{conv}}$, $\mathbf{(H)_{reg}}$, $\mathbf{(H)_{null}}$, $\mathbf{(H)_{pos}}$ and $p\in\nn^*$, one has, as $\left\|u_1\right\|_{L^{\infty}(0,T)}\to 0$,
		\begin{equation}\label{reste}
			\langle\left( \psi(\cdot;u,\ph_1)-\psi_1-\Psi\right)(T),\ph_p\rangle= \mathcal{O}\left(\left\|u_1\right\|_{L^2(0,T)}^2+\left\|\psi(T;u,\ph_1)-\psi_1(T)\right\|_{L^2(0,1)}^2\right),
		\end{equation}
		\begin{equation}\label{reste2}
			\langle\left(\psi(\cdot;u,\ph_1)-\psi_1-\Psi-\xi\right)(T)\rangle= \mathcal{O}\left(\left\|u_1\right\|_{L^2(0,T)}^3+\left\|\psi(T;u,\ph_1)-\psi_1(T)\right\|_{L^2(0,1)}^3\right).
		\end{equation}
	\end{crl}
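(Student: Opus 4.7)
The plan is to obtain the Corollary as an immediate consequence of the sharp remainder estimates \eqref{remainderquadu1} and \eqref{remaindercubdu1} combined with the control on the boundary terms $u_1^\ell(T)$ supplied by Proposition \ref{proprestraff}. Indeed, the only discrepancy between \eqref{remainderquadu1}--\eqref{remaindercubdu1} and the target estimates \eqref{reste}--\eqref{reste2} lies in the $|u_1(T)|^2$ and $|u_1(T)|^3$ terms, which must be traded for powers of $\left\|\psi(T;u,\ph_1)-\psi_1(T)\right\|_{L^2(0,1)}$.

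First I would bound the Euclidean norm of $u_1(T)\in\rr^r$ by its $\ell^1$-counterpart via $|u_1(T)|\leq \sum_{\ell=1}^r|u_1^\ell(T)|$, so that \eqref{termrestefin} yields
$$|u_1(T)|=\mathcal{O}\left(\sqrt{T}\left\|u_1\right\|_{L^2(0,T)}+\left\|\psi(T;u,\ph_1)-\psi_1(T)\right\|_{L^2(0,1)}\right).$$
Then, applying the elementary inequality $(a+b)^n\leq 2^{n-1}(a^n+b^n)$ with $n=2$ and $n=3$, and absorbing the $T$-dependent prefactors into the implicit constant (which is legitimate since $T$ is fixed in the STLC framework of Definition \ref{defistlc} and the asymptotic regime is $\|u_1\|_{L^\infty(0,T)}\to 0$), I obtain
$$|u_1(T)|^n=\mathcal{O}\left(\left\|u_1\right\|_{L^2(0,T)}^n+\left\|\psi(T;u,\ph_1)-\psi_1(T)\right\|_{L^2(0,1)}^n\right),\quad n\in\{2,3\}.$$

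Substituting these two bounds into the right-hand sides of \eqref{remainderquadu1} and \eqref{remaindercubdu1} respectively yields the desired estimates \eqref{reste} and \eqref{reste2}. There is no real obstacle: this is a direct substitution, with all the analytic work already done in the previous propositions; the only mildly delicate point is the handling of the $T$-dependent factors, but these are harmless once it is understood that $T$ is fixed first in the STLC definition, while $\delta$, $\varepsilon$ and $u$ are then chosen small.
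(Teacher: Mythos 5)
Your proposal is correct and is exactly the argument the paper intends: the corollary is obtained by substituting the boundary-term estimate \eqref{termrestefin} of Proposition \ref{proprestraff} into \eqref{remainderquadu1} and \eqref{remaindercubdu1}, with the elementary power inequality and absorption of the $T$-dependent factors handled just as you describe.
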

	\subsection{A new expression for the quadratic expansion}
	
	The purpose of this section is to show the following proposition, which has already been proved in \cite[Proposition 5.1]{bournissou2021quad} for $\ell=L$ with some minor adaptations required here.
	\begin{prop}\label{devgt} Let $T>0$, $1\leq\ell\leq L\leq r$ and $f,g\in L^2((0,T),\rr)$. If $\mathbf{(H)_{conv}}$ and  $\mathbf{(H)_{null}}$ hold, then,
		\begin{equation}\begin{gathered}\label{gt}	\Im\left(\mathcal{F}_T^{\ell,L}(f,g)+\mathcal{F}_T^{L,\ell}(g,f)\right)=(-1)^{k+1}\gamma^{\ell,L}_{2k-1}\int_0^Tf_k(t)g_k(t)\cos(\omega_K(t-T))\dt\\+\mathcal{O}\left(\sum_{p=1}^k\left(\left|f_p(T)\right|^2+\left|g_p(T)\right|^2\right)+T\left\|(f_k,g_k)\right\|_{L^2(0,T)}^2\right).\end{gathered}\end{equation}
	\end{prop}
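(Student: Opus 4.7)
The plan is to perform a double iterated integration by parts on the kernel of $\mathcal{F}_T^{\ell,L}(f,g)$ and of its symmetric counterpart $\mathcal{F}_T^{L,\ell}(g,f)$, so as to trade powers of $\omega_j$ and $\nu_j$ appearing in $H_{\ell,L}$ for primitives $f_q,g_p$ of the controls. The intermediate coefficients will recombine into the quadratic brackets of Definition~\ref{defquadbra}, and $\mathbf{(H)_{null}}$ will kill every one below order $2k-1$, leaving only the announced top-order term. For $\ell=L$ this is \cite[Proposition~5.1]{bournissou2021quad}; the novelty lies in the symmetrization required when $\ell<L$.

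First, I would absorb the phase $e^{-i\omega_K T}$ into the kernel via $\nu_j+\omega_j=\omega_K$, writing
$$\mathcal{F}_T^{\ell,L}(f,g)=-\sum_{j\geq1}c_j^{\ell,L}\int_0^T\!\!\int_0^t f(t)g(\tau)e^{i\nu_j(t-T)}e^{i\omega_j(\tau-T)}\dd\tau\dd t.$$
In this form the phases equal $1$ on the corner $(T,T)$, which makes the final cosine emerge naturally after taking $\Im$. Then I would integrate by parts $k$ times in $\tau$, integrating $g$ and differentiating the exponential. Since $g_p(0)=0$, only the boundary contributions at $\tau=t$ survive, producing terms of the form $(-i\omega_j)^{p-1}\int_0^T f(t)g_p(t)e^{i\omega_K(t-T)}\dd t$ for $p=1,\dots,k$, plus a residual in which $g$ has been replaced by $g_k$ and a factor $(-i\omega_j)^k$ appears. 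On each of these single integrals (and on $\mathcal{F}_T^{L,\ell}(g,f)$ treated symmetrically) I would then perform further IBP in $t$, integrating $f$ and differentiating the remaining phase, to produce primitives $f_q$ with coefficients $(-i\nu_j)^{q-1}$. The boundary contributions generated along the way take the shape $f_q(T)g_p(T)$, bounded via $2|f_q(T)g_p(T)|\le|f_q(T)|^2+|g_p(T)|^2$.

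The core of the argument is the bookkeeping step in which all surviving interior terms are grouped by the total degree $s=a+b$ of the monomial $\nu_j^a\omega_j^b$ attached to $c_j^{\ell,L}$. A careful count of how the IBP steps distribute between the two variables shows that, for each $s\le 2k-2$, the symmetrized combination $\mathcal{F}_T^{\ell,L}(f,g)+\mathcal{F}_T^{L,\ell}(g,f)$ produces exactly
$$\sum_{j\geq1}\left(\nu_j^{\lfloor(s+1)/2\rfloor}\omega_j^{\lfloor s/2\rfloor}c_j^{\ell,L}-\nu_j^{\lfloor s/2\rfloor}\omega_j^{\lfloor(s+1)/2\rfloor}c_j^{L,\ell}\right)=\gamma_s^{\ell,L},$$
which vanishes by $\mathbf{(H)_{null}}$. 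The unique surviving interior contribution comes from the ``fully integrated'' residual at $s=2k-1$; after Fubini it reduces to $(-1)^{k+1}i\,\gamma_{2k-1}^{\ell,L}\int_0^T f_k(t)g_k(t)e^{i\omega_K(t-T)}\dd t$, and taking $\Im$ converts the exponential into $\cos(\omega_K(t-T))$, producing the announced main term.

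The hard part will be the combinatorial bookkeeping of signs and parities: one must precisely track the powers of $i$ and the floor exponents $\lfloor(p+1)/2\rfloor$ versus $\lfloor p/2\rfloor$ of Definition~\ref{defquadbra}, and verify that every intermediate term recombines either into a vanishing $\gamma_s^{\ell,L}$, or into a boundary quadratic $|f_p(T)|^2+|g_p(T)|^2$, or into the final double-integral residual bounded by Cauchy--Schwarz by $T\|(f_k,g_k)\|_{L^2(0,T)}^2$. Convergence of every intermediate series is ensured by $\mathbf{(H)_{conv}}$, since $|c_j^{\ell,L}|j^{4k}$ is summable while $\omega_j^a\nu_j^b=\mathcal{O}(j^{2(a+b)})$ with $a+b\le 2k-1$.
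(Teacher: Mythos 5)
Your proposal follows essentially the same route as the paper: the paper packages your iterated double integration by parts as Lemma \ref{lmhg1}, recombines the diagonal coefficients into the $\gamma_s^{\ell,L}$ via the identities \eqref{calcsum1} (with one extra integration by parts needed to align the $f_{p-1}g_p$ terms of one expansion with the $f_pg_{p-1}$ terms of the other, so the symmetrization you describe actually pairs up), kills everything of total degree $\leq 2k-2$ with $\mathbf{(H)_{null}}$, and bounds the residual by Cauchy--Schwarz. One imprecision: the main term does not come from the ``fully integrated residual after Fubini'' --- that double integral carries $\partial_1^k\partial_2^k H$ (total degree $2k$) and is exactly the error absorbed into $T\left\|(f_k,g_k)\right\|_{L^2}^2$; the term $i\gamma_{2k-1}^{\ell,L}\int_0^T f_k g_k e^{i\omega_K(t-T)}$ instead arises from the last \emph{diagonal} boundary terms $\int_0^T f_k g_k\left(\partial_1^{k-1}\partial_2^{k}H_{L,\ell}-\partial_1^{k}\partial_2^{k-1}H_{\ell,L}\right)(t,t)\dt$ of degree $2k-1$.
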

	We first prove the following lemma.
	\begin{lm}\label{lmhg1}
		Let $n\in\nn$ and $H\in\mathcal{C}^{2n}(\rr^2,\cc)$. There exists a quadratic form $S_n$ on $\cc^{3n}$ such that for all $T>0$ and $f,g\in L^1(0,T)$,
		\begin{equation*}\begin{gathered}\int_0^Tf(t)\left(\int_0^t g(\tau)H(t,\tau)\mathrm{d}\tau\right)\dt=-\sum_{p=1}^n\int_0^Tf_p(t)g_{p-1}(t)\partial_1^{p-1}\partial_2^{p-1}H(t,t)\dt\\-\sum_{p=1}^n\int_0^Tf_p(t)g_p(t)\partial_1^p\partial_2^{p-1}H(t,t)\dt+\int_0^Tf_n(t)\left(\int_0^tg_n(\tau)\partial_1^n\partial_2^nH(t,\tau)\mathrm{d}\tau\right)\dt\\+S_n\left(f_1(T),\cdots,f_n(T),g_1(T),\cdots,g_n(T),C_0^n(g),\cdots,C_{n-1}^n(g)\right),\end{gathered}\end{equation*}
		where 	$C^n_p(g):=\displaystyle\int_0^Tg_n(\tau)\partial_1^p\partial_2^nH(T,\tau)\mathrm{d}\tau.$
	\end{lm}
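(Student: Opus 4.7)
The plan is to prove the identity by induction on $n$. For $n=0$, both sums on the right-hand side are empty and the remainder coincides with the left-hand side, so one may take $S_0 \equiv 0$. Assume the identity holds at level $n$ and that $H \in \mathcal{C}^{2(n+1)}$; it then suffices to integrate by parts in the leftover remainder
\begin{equation*}
R_n := \int_0^T f_n(t) \left( \int_0^t g_n(\tau) \partial_1^n \partial_2^n H(t,\tau) \,\mathrm{d}\tau \right) \mathrm{d}t
\end{equation*}
and rearrange the resulting terms into the shape claimed at level $n+1$.

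The heart of the argument is a double integration by parts applied to $R_n$. Using $f_n = f_{n+1}'$ and $f_{n+1}(0)=0$, an IBP in $t$ produces the boundary term $f_{n+1}(T)\int_0^T g_n(\tau)\partial_1^n\partial_2^n H(T,\tau)\,\mathrm{d}\tau$ together with an integral against $-\partial_t$ of the inner primitive. Differentiating under the integral, the latter splits into a diagonal contribution $g_n(t)\partial_1^n\partial_2^n H(t,t)$, which is exactly the $p=n+1$ term of the first sum at level $n+1$, plus a new double integral $\int_0^t g_n(\tau)\partial_1^{n+1}\partial_2^n H(t,\tau)\,\mathrm{d}\tau$. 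A second IBP in $\tau$, using $g_n=g_{n+1}'$ and $g_{n+1}(0)=0$, converts the latter into the $p=n+1$ term of the second sum plus $R_{n+1}$, the remainder at the next level.

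It remains to package the boundary contributions into a quadratic form in the new arguments. The $t$-boundary term is rewritten, via one more IBP in $\tau$, as
\begin{equation*}
f_{n+1}(T)\, g_{n+1}(T)\, \partial_1^n\partial_2^n H(T,T) \,-\, f_{n+1}(T)\, C_n^{n+1}(g),
\end{equation*}
which is bilinear in the enlarged tuple
\begin{equation*}
\bigl(f_1(T),\dotsc,f_{n+1}(T),\, g_1(T),\dotsc,g_{n+1}(T),\, C_0^{n+1}(g),\dotsc,C_n^{n+1}(g)\bigr).
\end{equation*}
To absorb the inductive $S_n$, I use the identity $C_p^n(g) = g_{n+1}(T)\partial_1^p\partial_2^n H(T,T) - C_p^{n+1}(g)$ for $0\leq p\leq n-1$, again obtained by IBP in $\tau$, which expresses each old argument as a linear combination of new ones with coefficients depending only on $H$. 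Hence $S_n$ becomes a quadratic form in the new variables, and summing all boundary contributions defines the required $S_{n+1}$.

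The main obstacle is the bookkeeping: at each integration by parts a boundary contribution at $t=T$ or $\tau=T$ appears, and it must be tracked and reduced to an expression bilinear in the $3(n+1)$ permitted arguments, with coefficients depending only on $H$ and its partial derivatives evaluated at $(T,T)$. Once this is checked, the quadratic nature of $S_n$ is automatically preserved and the induction closes.
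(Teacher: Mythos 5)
Your proof is correct and follows essentially the same route as the paper's: induction on $n$ with $S_0=0$, a double integration by parts on the remainder $R_n$ producing the $p=n+1$ terms of both sums plus $R_{n+1}$, and the identity $C_p^n(g)=g_{n+1}(T)\partial_1^p\partial_2^nH(T,T)-C_p^{n+1}(g)$ to re-express the old boundary arguments (and the new one $f_{n+1}(T)C_n^n(g)$) as a quadratic form in the enlarged tuple. No gaps.
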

	\begin{proof}
		We prove this lemma by induction on $n\in\nn$. It holds for $n=0$ with $S_0=0$. Assume that the result is true for $n\in\nn$, fixed. Let $H\in\mathcal{C}^{2(n+1)}(\rr^2,\cc)$, $f,g\in L^1(0,T)$. With an integration by parts,
		\begin{equation*}\begin{gathered}\int_0^Tf_n(t)\left(\int_0^tg_n(\tau)\partial_1^n\partial_2^nH(t,\tau)\mathrm{d}\tau\right)\dt=f_{n+1}(T)\int_0^Tg_n(\tau)\partial_1^n\partial_2^nH(T,\tau)\mathrm{d}\tau\\-\int_0^Tf_{n+1}(t)\left(g_n(t)\partial_1^n\partial_2^nH(t,t)+\int_0^tg_n(\tau)\partial_1^{n+1}\partial_2^nH(t,\tau)\mathrm{d}\tau\right)\dt.\end{gathered}\end{equation*}
		Another integration by parts finally leads to
		\begin{equation*}\begin{gathered}\int_0^Tf_n(t)\left(\int_0^tg_n(\tau)\partial_1^n\partial_2^nH(t,\tau)\mathrm{d}\tau\right)\dt=f_{n+1}(T)C_n^n(g)-\int_0^Tf_{n+1}(t)g_n(t)\partial_1^n\partial_2^nH(t,t)\dt\\-\int_0^Tf_{n+1}(t)g_{n+1}(t)\partial_1^{n+1}\partial_2^nH(t,t)\dt+\int_0^Tf_{n+1}(t)\left(\int_0^tg_{n+1}(\tau)\partial_1^{n+1}\partial_2^{n+1}H(t,\tau)\mathrm{d}\tau\right)\dt.\end{gathered}\end{equation*}
		A final integration by parts finally gives $C_p^n(g)=g_{n+1}(T)\partial_1^p\partial_2^nH(T,T)-C_p^{n+1}(g)$,  for all $p\in\llbracket 0,n\rrbracket$. Using the induction hypothesis, we obtain the result.
	\end{proof}
	Thanks to this lemma, we are now able to prove Proposition \ref{devgt}.
	\begin{proof}[Proof of Proposition \ref{devgt}] Let $T>0$ and $1\leq\ell\leq L\leq r$. Using $\mathbf{(H)_{conv}}$, one has $H_{\ell,L},H_{L,\ell}\in\mathcal{C}^{2k}(\rr^2,\cc)$.  First, note that for all $p\in\llbracket 1,k\rrbracket$, for all $t\in[0,T]$, 
		\begin{equation}\label{calcsum1}
			\begin{aligned}
				\partial_1^{p-1}\partial_2^{p-1}\left(H_{L,\ell}-H_{\ell,L}\right)(t,t) &= (-1)^{p-1}e^{i\omega_K(t-T)}\gamma^{\ell,L}_{2p-2}, \\
				\partial_1^{p-1}\partial_2^pH_{L,\ell}(t,t)-\partial_1^p\partial_2^{p-1}H_{\ell,L}(t,t) &= i(-1)^{p-1}e^{i\omega_K(t-T)}\gamma^{\ell,L}_{2p-1}.
			\end{aligned}
		\end{equation}
		Let $f,g\in L^2((0,T),\rr)$ and $p\in\llbracket 1,k\rrbracket$. With an integration by parts,
		\begin{equation*}\begin{gathered}\int_0^Tf_{p-1}(t)g_p(t)\partial_1^{p-1}\partial_2^{p-1}H_{L,\ell}(t,t)\dt=f_p(T)g_p(T)\partial_1^{p-1}\partial_2^{p-1}H_{L,\ell}(T,T)\\-\int_0^Tf_p(t)g_{p-1}(t)\partial_1^{p-1}\partial_2^{p-1}H_{L,\ell}(t,t)\dt-\int_0^Tf_p(t)g_p(t)\left(\partial_1^p\partial_2^{p-1}+\partial_1^{p-1}\partial_2^p\right)H_{L,\ell}(t,t)\dt.\end{gathered}\end{equation*}
		Then, applying Lemma \ref{lmhg1} with $f\to g$, $g\to f$, $ H_{L,\ell}\to H$, $k\to n$ and using the last equality, one gets
		\begin{equation}\begin{gathered}\label{lmhg2}\int_0^Tg(t)\left(\int_0^tf(\tau)H_{L,\ell}(t,\tau)\mathrm{d}\tau\right)\dt=\sum_{p=1}^k\int_0^Tf_p(t)g_{p-1}(t)\partial_1^{p-1}\partial_2^{p-1}H_{L,\ell}(t,t)\dt\\+\sum_{p=1}^k\int_0^Tf_p(t)g_p(t)\partial_1^{p-1}\partial_2^pH_{L,\ell}(t,t)\dt+\int_0^Tg_k(t)\left(\int_0^tf_k(\tau)\partial_1^k\partial_2^kH_{L,\ell}(t,\tau)\mathrm{d}\tau\right)\dt\\+\mathcal{O}\left(\sum_{p=1}^k\left(\left|f_p(T)\right|^2+\left|g_p(T)\right|^2\right)+T\left\|f_k\right\|_{L^2(0,T)}^2\right).\end{gathered}\end{equation}
		To estimate $C_p^k(f)$, we used the boundness of $H_{L,\ell}$ and the Cauchy--Schwarz's inequality. Applying  Lemma \ref{lmhg1} again with $ H_{\ell,L}\to H$, $k \to n$, using \eqref{calcsum1}, \eqref{lmhg2} and the assumption $\mathbf{(H)_{null}}$, one gets
		\begin{equation*}\begin{gathered}(-1)^{k+1}e^{i\omega_KT}\left(\mathcal{F}_T^{\ell,L}(f,g)+\mathcal{F}_T^{L,\ell}(g,f)\right)=i\gamma^{\ell,L}_{2k-1}\int_0^Tf_k(t)g_k(t)e^{i\omega_Kt}\dt\\+\sum_{j=1}^{+\infty}\omega_j^k\nu_j^k\int_0^Te^{i\nu_jt}\left(c_j^{\ell,L}f_k(t)\left(\int_0^tg_k(\tau)e^{i\omega_j\tau}\mathrm{d}\tau\right)+c_j^{L,\ell}g_k(t)\left(\int_0^tf_k(\tau)e^{i\omega_j\tau}\mathrm{d}\tau\right)\right)\dt\\+\mathcal{O}\left(\sum_{p=1}^k\left(\left|f_p(T)\right|^2+\left|g_p(T)\right|^2\right)+T\left\|(f_k,g_k)\right\|_{L^2(0,T)}^2\right).\end{gathered}\end{equation*}
		Using the Cauchy--Schwarz's inequality and taking the imaginary part, we obtain the result.
	\end{proof}
	\subsection{Vectorial relations}
	The equations \eqref{schro:quad} and \eqref{gt} give a quadratic expansion of the solution to \eqref{schr}. We then estimate the boundary terms $|u_p^{\ell}(T)|^2$ that appear. To do that, we need the following lemma, which is an equivalent in the infinite-dimensional case of \cite[Lemma $3.12$]{gherdaoui2}.
	
	\begin{lm}\label{liberte} Assume that the hypotheses $\mathbf{(H)_{conv}}$, $\mathbf{(H)_{null}}$ and $\mathbf{(H)_{pos}}$ hold. Then, the following family is $\rr$-linearly independent in $\cc^{\nn^*}$
		$$\left(\left(\langle\mu_{\ell}\ph_1,\ph_j\rangle(-i\omega_j)^p\right)_{j\geq 1}\right)_{(p,\ell)\in\llbracket 0,k-1\rrbracket\times\llbracket 1,r\rrbracket}$$
		
	\end{lm}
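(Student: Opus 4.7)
The plan is to generalize the proof of Proposition \ref{proprestraff} (which establishes this lemma in the case $k=1$) to include the polynomial factors $(-i\omega_j)^p$. Suppose by contradiction that there exist real coefficients $(\alpha_{p,\ell})\in\rr^{kr}\setminus\{0\}$ such that
\[
\sum_{p=0}^{k-1}\sum_{\ell=1}^{r}\alpha_{p,\ell}\langle\mu_\ell\ph_1,\ph_j\rangle(-i\omega_j)^p=0, \qquad \forall j\geq 1.
\]
Introduce the polynomials $P_\ell(X):=\sum_{p=0}^{k-1}\alpha_{p,\ell}(-iX)^p$, so the hypothesis becomes $\sum_\ell P_\ell(\omega_j)\langle\mu_\ell\ph_1,\ph_j\rangle=0$ for every $j$.

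Mimicking the proof of Proposition \ref{proprestraff}, I would introduce the quadratic expression
\[
\mathcal{C}:=\sum_{\ell,L=1}^{r}\sum_{j\geq 1}P_\ell(\omega_j)\overline{P_L(\omega_j)}\bigl(c_j^{L,\ell}\omega_j^k\nu_j^{k-1}-c_j^{\ell,L}\omega_j^{k-1}\nu_j^k\bigr),
\]
the convergence being guaranteed by $\mathbf{(H)_{conv}}$. Using the factorizations $c_j^{L,\ell}=\langle\mu_L\ph_K,\ph_j\rangle\langle\mu_\ell\ph_1,\ph_j\rangle$ and $c_j^{\ell,L}=\langle\mu_\ell\ph_K,\ph_j\rangle\langle\mu_L\ph_1,\ph_j\rangle$, the inner sums over $\ell$ or $L$ decouple, and each time one of the factors is $\sum_\ell P_\ell(\omega_j)\langle\mu_\ell\ph_1,\ph_j\rangle=0$ (or its complex conjugate). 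Thus $\mathcal{C}=0$.

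Next I would expand $P_\ell(\omega_j)\overline{P_L(\omega_j)}=\sum_{p,P}\alpha_{p,\ell}\alpha_{P,L}\,i^{P-p}\omega_j^{p+P}$, turning $\mathcal{C}=0$ into a real quadratic identity in $(\alpha_{p,\ell})$ that couples, for each $(p+P)\in\llbracket 0,2k-2\rrbracket$, the sums $\sum_j c_j^{L,\ell}\omega_j^{k+p+P}\nu_j^{k-1}$ and $\sum_j c_j^{\ell,L}\omega_j^{k-1+p+P}\nu_j^k$. The constancy $\nu_j+\omega_j=\omega_K$ lets me reexpress each of these as a polynomial-in-$\omega_K$ combination of $\sum_j\nu_j^a\omega_j^b c_j^{L,\ell}$ with $a+b\leq 2k-1$; after antisymmetrization in $(\ell,L)$, the result is a real-linear combination of the $\gamma^{\ell,L}_s$. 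By $\mathbf{(H)_{null}}$, all $\gamma^{\ell,L}_s$ with $s\leq 2k-2$ vanish, and the only surviving contribution is a combination of $\gamma^{\ell,L}_{2k-1}$ weights, which collects into a real multiple of $q(\alpha_{p_0,1},\ldots,\alpha_{p_0,r})$ for some slice index $p_0$. Repeating the procedure with the modified weights $\omega_j^{k+2m}\nu_j^{k-1}$ and $\omega_j^{k-1+2m}\nu_j^k$ for $m\in\llbracket 0,k-1\rrbracket$ provides $k$ such identities, which via Vandermonde invertibility on the distinct values of $\omega_j$ decouple into the $k$ separate relations $q(\alpha_{p_0,\cdot})=0$ for every $p_0\in\llbracket 0,k-1\rrbracket$. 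Then $\mathbf{(H)_{pos}}$ forces every slice $(\alpha_{p_0,\ell})_\ell$ to be zero, contradicting the assumption.

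The main obstacle is the combinatorial bookkeeping required to track which $\gamma^{\ell,L}_s$'s contribute after invoking $\mathbf{(H)_{null}}$ and how the Vandermonde-type decoupling on the weights $\omega_j^{2m}\nu_j^{2m'}$ cleanly isolates one copy of $q$ per slice of $\alpha$. An alternative route is to use the Lie-bracket interpretation sketched in Section \ref{lienode}, namely $(-i\omega_j)^p\langle\mu_\ell\ph_1,\ph_j\rangle=i^p\langle\underline{\ad}^p_A(B_\ell)\ph_1,\ph_j\rangle$, which reduces the statement to the $\rr$-linear independence of the iterated brackets $\{\underline{\ad}^p_A(B_\ell)\ph_1\}_{(p,\ell)}$ in $L^2_\rr(0,1)$ and allows a direct transfer of the finite-dimensional proof of \cite[Lemma 3.12]{gherdaoui2}.
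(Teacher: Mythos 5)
Your overall architecture (contradiction, a quadratic expression in the $\alpha_{p,\ell}$ that vanishes by the decoupling $c_j^{L,\ell}=\langle\mu_L\ph_K,\ph_j\rangle\langle\mu_\ell\ph_1,\ph_j\rangle$, reduction to $q=0$ via Corollary \ref{expan-somm-11} and $\mathbf{(H)_{null}}$, conclusion by $\mathbf{(H)_{pos}}$) is the right one, but the specific choice of weights breaks the argument. With the fixed weight $\omega_j^{k}\nu_j^{k-1}$, the $(p,P)$ term of your expansion involves $\sum_j c_j^{L,\ell}\omega_j^{k+p+P}\nu_j^{k-1}$, whose total degree in $(\omega_j,\nu_j)$ is $2k-1+p+P$. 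For $p+P\geq 2$ this exceeds $2k$ in $j^2$-count beyond what $\mathbf{(H)_{conv}}$ controls (it only gives $\sum_j|c_j^{\ell,L}|j^{4k}<\infty$, i.e.\ total degree at most $2k$), so $\mathcal{C}$ need not even converge; and even formally, Corollary \ref{expan-somm-11} then produces brackets $\gamma_s^{\ell,L}$ with $s\geq 2k$, which are killed neither by $\mathbf{(H)_{null}}$ (valid only for $s\leq 2k-2$) nor absorbed into $q$ (built from $s=2k-1$). Raising the weights to $\omega_j^{k+2m}\nu_j^{k-1}$ makes this worse, and the claimed Vandermonde decoupling into $k$ separate relations $q(\alpha_{p_0,\cdot})=0$ is not substantiated: the cross-slice terms $p\neq P$ do not vanish in your scheme.

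The missing idea is an extremal-slice argument rather than a simultaneous decoupling. Set $l_0:=\max\{p:(\alpha_{p,1},\dots,\alpha_{p,r})\neq 0\}$ and use the \emph{shifted} weights $\omega_j^{p+k-l_0}\nu_j^{q+k-l_0-1}$ (and its mirror for $c^{\ell,L}$), restricting $p,q$ to $\llbracket 0,l_0\rrbracket$. Every term then has total degree $p+q+2k-2l_0-1\leq 2k-1$, with equality exactly on the top diagonal $p=q=l_0$; convergence holds under $\mathbf{(H)_{conv}}$, all sub-top terms reduce through Corollary \ref{expan-somm-11} to $\gamma_s^{\ell,L}$ with $s\leq 2k-2$ and vanish by $\mathbf{(H)_{null}}$, and the single surviving contribution is $\pm\,q(\alpha_{l_0,1},\dots,\alpha_{l_0,r})=0$. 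Then $\mathbf{(H)_{pos}}$ contradicts the maximality of $l_0$; no Vandermonde step is needed. This is exactly the content of the finite-dimensional argument of \cite[Lemma 3.12]{gherdaoui2} that you mention as an alternative (expanding $[\underline{\ad}_A^{k-l_0-1}(B),\underline{\ad}_A^{k-l_0}(B)]\ph_1$ with $B$ the top-truncated combination), carried out here directly on the series so as to avoid assuming extra regularity on $\mu$.
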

	\begin{proof}We use a proof technique similar to Proposition \ref{proprestraff}. By contradiction, assume that there exist $\left(\alpha_{p,\ell}\right)_{(p,\ell)\in\llbracket 0,k-1\rrbracket\times\llbracket 1,r\rrbracket}\in\rr^{k\times r}\setminus\{0\}$ s.t. 
		$$\forall j\geq 1, \quad \sum_{p\in\llbracket 0,k-1\rrbracket}\sum_{\ell\in\llbracket 1,r\rrbracket}\alpha_{p,\ell}\langle\mu_{\ell}\ph_1,\ph_j\rangle(-i\omega_j)^p=0.$$
		Let $l_0:=\max\{p\in\llbracket 0,k-1\rrbracket; \ (\alpha_{p,1},\cdots,\alpha_{p,r})\neq 0\}.$ By hypothesis, the following quantity is zero
		\begin{equation*}\sum_{\ell,L\in\llbracket 1,r\rrbracket}\sum_{p,q\in\llbracket 0,l_0\rrbracket}\alpha_{p,\ell}\alpha_{q,L}(-i)^{p+q}\sum_{j=1}^{+\infty}\left(c_j^{L,\ell}\omega_j^{p+k-l_0}\nu_j^{q+k-l_0-1}-c_j^{\ell,L}\omega_j^{q+k-l_0-1}\nu_j^{p+k-l_0}\right)=0.\end{equation*}
		We will denote this quantity by $\sum_{\ell,L\in\llbracket 1,r\rrbracket}C_{\ell,L}$. Let $v:=(v_j)_{j\geq 1}$ be a sequence of real numbers and $m,n\in\nn$. Subject to convergence, we define
		$$R_1^{m,n}(v):=\sum_{j=1}^{+\infty}v_j\omega_j^{m+k-l_0}\nu_j^{n+k-l_0-1} \quad\text{and}\quad	R_2^{m,n}(v):=\sum_{j=1}^{+\infty}v_j\omega_j^{m+k-l_0-1}\nu_j^{n+k-l_0}.$$
		By definition of $R_1$ and $R_2$, one has, for all $1\leq\ell\leq L\leq r$,
		\begin{equation*}\begin{gathered}C_{\ell,L}=\sum_{p=0}^{l_0}\alpha_{p,\ell}\alpha_{p,L}(-1)^{p+1}\gamma^{\ell,L}_{2(p+k-l_0)-1}+\sum_{\substack{p,q=0, \\ p\neq q}}^{l_0}\alpha_{p,\ell}\alpha_{q,L}(-i)^{p+q}\left(R_1^{p,q}(c^{L,\ell})-R_2^{q,p}(c^{\ell,L})\right).\end{gathered}\end{equation*}
		For all $p\in\llbracket 0,l_0-1\rrbracket$, for all $q\in\llbracket p+1,l_0\rrbracket$, Corollary \ref{expan-somm-11} with $q-p-1\to \nu$ and $p+k-l_0\to p$ gives
		$$R_1^{p,q}(c^{L,\ell})-R_2^{q,p}(c^{\ell,L})\in\spn\left(\gamma^{\ell,L}_{2(p+k-l_0)+r}, \ r\in\llbracket 0,q-p-1\rrbracket\right)\subset\rr.$$
		Moreover, $2(p+k-l_0)+r\leq 2k-2$. Using $\mathbf{(H)_{null}}$, this sum is zero. 
		Finally, for all $p\in\llbracket 1,l_0\rrbracket$, for all $q\in\llbracket 0,p-1\rrbracket$, Corollary \ref{expan-somm-11} with $p-q+1\to\nu$ and $q+k-l_0-1\to p$ gives
		$$R_1^{p,q}(c^{L,\ell})-R_2^{q,p}(c^{\ell,L})\in\spn\left(\gamma^{\ell,L}_{2(q+k-l_0-1)+r}, \ r\in\llbracket 0,p-q+1\rrbracket\right).$$
		Similarly, $2(q+k-l_0-1)+r\leq 2k-2$ so this sum is zero by $\mathbf{(H)_{null}}$. Finally, 
		\begin{equation}\label{exp-crochet4}
			C_{\ell,L}=\alpha_{l_0,\ell}\alpha_{l_0,L}(-1)^{l_0+1}\gamma^{\ell,L}_{2k-1}.
		\end{equation}
		With similar computations, we prove that for all $1\leq L\leq \ell\leq r$,
		\begin{equation}\label{exp-crochet5}
			C_{\ell,L}=\alpha_{l_0,\ell}\alpha_{l_0 ,L}(-1)^{l_0+1}\gamma^{L,\ell}_{2k-1}.
		\end{equation}
		As $\sum_{\ell,L\in\llbracket 1,r\rrbracket}C_{\ell,L}=0$, \eqref{exp-crochet4} and \eqref{exp-crochet5} lead to 
		\begin{equation*}\sum_{\ell=1}^r\gamma_{2k-1}^{\ell}\frac{\alpha_{l_0,\ell}^2}{2}+\sum_{1\leq \ell<L\leq r}\gamma_{2k-1}^{\ell,L}\alpha_{l_0,\ell}\alpha_{l_0,L}=0,\quad\textit{i.e.}\ \quad q(\alpha_{l_0,1},\cdots,\alpha_{l_0,r})=0.\end{equation*}
		Using $\mathbf{(H)_{pos}}$, $\left(\alpha_{l_0,\ell}\right)_{\ell\in\llbracket 1,r\rrbracket}=0$. This is a contradiction with the choice of $l_0$. We then obtain the result.
	\end{proof}
	\begin{rmq}The proof of Lemma \ref{liberte} may seem a little mysterious at first sight, but there is a strict analogy with the result proved in the finite-dimensional case -- see \cite[Lemma $3.12$]{gherdaoui2}. Note that, if $\mu\in\mathcal{C}^{\infty}_c((0,1),\rr)^r$, then using  \cite[Lemma A.$9$]{gherdaoui}, 
		$$\forall 1\leq\ell\leq r, \ 0\leq p\leq k-1,\quad \left(\langle\mu_{\ell}\ph_1,\ph_j\rangle(-i\omega_j)^p\right)_{j\geq 1}=i^p\left(\left\langle \underline{\ad}_A^p(\mu_{\ell})\ph_1,\ph_j\right\rangle\right)_{j\geq 1}.$$
		Consequently, we proved that the family $\left(\underline{\ad}_A^p(\mu_{\ell})\ph_1\right)_{(p,\ell)\in\llbracket 0,k-1\rrbracket\times\llbracket 1,r\rrbracket}$ is linearly independent. This is the same family as in the finite-dimensional case. In \cite{gherdaoui2}, we consider $(\alpha_{p,\ell})_{(p,\ell)\in\llbracket 0,k-1\rrbracket\times\llbracket 1,r\rrbracket}\in\rr^{k\times r}$ scalars, not all zero, such that
		$$B\ph_1:=\sum_{p\in\llbracket 0,k-1\rrbracket}\sum_{\ell\in\llbracket 1,r\rrbracket}\alpha_{p,\ell}i^p\underline{\ad}_A^p(\mu_{\ell})\ph_1=0.$$
		Then, we define $l_0:=\max\{p\in\llbracket 0,k-1\rrbracket; \ (\alpha_{p,1},\cdots,\alpha_{p,r})\neq 0\}$. As $B\ph_1=0$, one has
		$\left[\underline{\ad}_A^{k-l_0-1}(B),\underline{\ad}_A^{k-l_0}(B)\right]\ph_1=0,$ and we expand this term. Actually, this is exactly what we did in the proof of Lemma \ref{liberte}, in a disguised way, by not assuming regularity on $\mu_{\ell}$ since
		$$\sum_{\ell,L\in\llbracket 1,r\rrbracket}C_{\ell,L}=-\left[\underline{\ad}_A^{k-l_0-1}(B),\underline{\ad}_A^{k-l_0}(B)\right]\ph_1.$$
	\end{rmq}
	\subsection{Closed-loop estimates}
	Using Lemma \ref{liberte}, we can now estimate the boundary terms, as stated in the following proposition.
	\begin{prop}\label{closed-lopp}Assume that $\mathbf{(H)_{conv}}$, $\mathbf{(H)_{reg}}$, $\mathbf{(H)_{null}}$ and $\mathbf{(H)_{pos}}$ hold. Then, as $\left\|u_1\right\|_{L^{\infty}(0,T)}\to0$,
		\begin{equation*}\sum_{p\in\llbracket 1,k\rrbracket}\sum_{\ell\in\llbracket 1,r\rrbracket}\left|u_p^{\ell}(T)\right|=\mathcal{O}\left(\left\|u_1\right\|_{L^2(0,T)}^2+\sqrt{T}\left\|u_k\right\|_{L^2(0,T)}+\left\|{\psi(T;u,\ph_1)-\psi}_1(T)\right\|_{L^2(0,1)}\right).\end{equation*}
	\end{prop}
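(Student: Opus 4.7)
My plan is to adapt the strategy used in the proof of Proposition \ref{proprestraff}: expand $\langle\psi(T;u,\ph_1)-\psi_1(T),\ph_{j}e^{-i\lambda_1 T}\rangle$ via the linearized solution $\Psi(T)$ for a carefully chosen finite set of indices $j$, then invert the resulting finite-dimensional linear system to control the boundary terms $u_p^{\ell}(T)$. The novelty compared to Proposition \ref{proprestraff} is that we now need to capture $kr$ boundary quantities instead of merely $r$; this is precisely where Lemma~\ref{liberte} intervenes, playing the role played in Proposition~\ref{proprestraff} by the simpler linear independence of $\bigl((\langle\mu_\ell\ph_1,\ph_j\rangle)_{j\geq 1}\bigr)_{\ell}$.

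First, I would use Lemma~\ref{liberte} to select $N\in\nn^*$ and indices $j_1,\dots,j_N\in\nn^*$ such that the family
\begin{equation*}
\bigl(i\langle\mu_{\ell}\ph_1,\ph_{j_n}\rangle(-i\omega_{j_n})^p\bigr)_{n\in\llbracket 1,N\rrbracket},\quad (p,\ell)\in\llbracket 0,k-1\rrbracket\times\llbracket 1,r\rrbracket,
\end{equation*}
is $\rr$-linearly independent in $\cc^{N}$. Existence of such indices is a standard truncation argument: by $\mathbf{(H)_{conv}}$ the infinite sequences lie in $\ell^2(\cc)$, and the Gram matrix of the full family with respect to the real inner product $\mathrm{Re}\langle\cdot,\cdot\rangle_{\ell^2}$ is positive definite by Lemma~\ref{liberte}; truncating at $N$ large enough then yields a positive definite matrix. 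This provides a constant $C_1>0$ such that for every $\alpha=(\alpha_{p,\ell})\in\rr^{kr}$,
\begin{equation*}
\sum_{p,\ell}|\alpha_{p,\ell}|\leq C_1\left(\sum_{n=1}^N\Bigl|\sum_{p,\ell}\alpha_{p,\ell}\,i\langle\mu_{\ell}\ph_1,\ph_{j_n}\rangle(-i\omega_{j_n})^p\Bigr|^2\right)^{1/2}.
\end{equation*}

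Second, starting from \eqref{schro:lin:exp}, $k$ iterated integrations by parts in $\int_0^T u^{\ell}(\tau)e^{i\omega_j\tau}\dd\tau$ (using that $u_p^{\ell}$ vanishes at $0$) give, after multiplication by $e^{-i\omega_j T}$,
\begin{equation*}
\langle\Psi(T),\ph_{j}e^{-i\lambda_1T}\rangle=\sum_{\ell=1}^r\sum_{p=1}^k i(-i\omega_{j})^{p-1}\langle\mu_{\ell}\ph_1,\ph_{j}\rangle u_p^{\ell}(T)+\sum_{\ell=1}^r i(-i\omega_j)^k\langle\mu_\ell\ph_1,\ph_j\rangle\!\!\int_0^T u_k^\ell(\tau)e^{i\omega_j(\tau-T)}\dd\tau,
\end{equation*}
the last sum being $\mathcal{O}\bigl(\sqrt{T}\|u_k\|_{L^2(0,T)}\bigr)$ for each fixed $j$ by the Cauchy--Schwarz inequality. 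Combining with the error estimate \eqref{reste} applied at $p=j_n$ yields
\begin{equation*}
\langle\psi(T;u,\ph_1)-\psi_1(T),\ph_{j_n}e^{-i\lambda_1T}\rangle=\sum_{p,\ell} i(-i\omega_{j_n})^{p-1}\langle\mu_{\ell}\ph_1,\ph_{j_n}\rangle u_p^{\ell}(T)+\mathcal{O}(\varepsilon_T),
\end{equation*}
with $\varepsilon_T:=\|u_1\|_{L^2(0,T)}^2+\|\psi(T;u,\ph_1)-\psi_1(T)\|_{L^2(0,1)}^2+\sqrt{T}\|u_k\|_{L^2(0,T)}$.

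Third, applying the lower bound from the first step with the choice $\alpha_{p-1,\ell}:=u_p^{\ell}(T)$, together with Bessel's inequality
\begin{equation*}
\sum_{n=1}^N\bigl|\langle\psi(T;u,\ph_1)-\psi_1(T),\ph_{j_n}e^{-i\lambda_1T}\rangle\bigr|^2\leq\|\psi(T;u,\ph_1)-\psi_1(T)\|_{L^2(0,1)}^2,
\end{equation*}
yields the claimed bound, up to the extra quadratic term $\|\psi(T;u,\ph_1)-\psi_1(T)\|_{L^2(0,1)}^2$. Well-posedness of \eqref{schr} forces $\|\psi(T;u,\ph_1)-\psi_1(T)\|_{L^2(0,1)}\to 0$ as $\|u_1\|_{L^{\infty}(0,T)}\to 0$, so this quadratic term is asymptotically dominated by its linear counterpart and can be absorbed. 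The main hurdle is the first step: turning the $\rr$-linear independence in the infinite-dimensional space $\cc^{\nn^*}$ provided by Lemma~\ref{liberte} into a quantitative lower bound depending only on finitely many coordinates, which hinges on the $\ell^2$-tail control from $\mathbf{(H)_{conv}}$. Once this reduction is carried out, the remaining computations are a straightforward $k$-fold analog of the $k=1$ argument of Proposition~\ref{proprestraff}.
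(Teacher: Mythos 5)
Your proposal is correct and follows essentially the same route as the paper: extract finitely many frequencies via Lemma \ref{liberte}, perform $k$ integrations by parts on \eqref{schro:lin:exp} to isolate the boundary terms $u_p^{\ell}(T)$ with a $\sqrt{T}\left\|u_k\right\|_{L^2}$ Cauchy--Schwarz remainder, combine with \eqref{reste}, and invert the resulting finite-dimensional system. The only (cosmetic) difference is that you quantify the inversion through a truncated Gram matrix, whereas the paper directly selects $kr$ indices $j_1,\dots,j_{kr}$ making the square matrix $M$ invertible.
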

	\begin{proof} By Lemma \ref{liberte}, there exist $j_1,\cdots,j_{kr}\in\nn^*$, pairwise distinct, such that
		$$M:=\left(\langle\mu_{\ell}\ph_1,\ph_{j_n}\rangle(-i\omega_{j_n})^p\right)_{((p,\ell),n)\in\llbracket 0,k-1\rrbracket\times\llbracket 1,r\rrbracket\times\llbracket 1,rk\rrbracket}\in GL_{kr}(\cc).$$ 
		Then, using the sharp remainder estimate \eqref{reste} and the expansion of the linearized system given by \eqref{schro:lin:exp}, one has for all $j\geq 1$, as $\left\|u_1\right\|_{L^{\infty}(0,T)}\to0$,
		\begin{equation*}\begin{gathered}\left\langle\psi(T;u,\ph_1)-\psi_1(T),\ph_je^{-i\lambda_1T}\right\rangle=i\sum_{\ell=1}^r\langle\mu_{\ell}\ph_1,\ph_j\rangle\int_0^Tu^{\ell}(t)e^{i\omega_j(t-T)}\dt\\+\mathcal{O}\left(\left\|u_1\right\|_{L^2(0,T)}^2+\left\|\psi(T;u,\ph_1)-\psi_1(T)\right\|_{L^2(0,1)}^2\right).\end{gathered}\end{equation*}
		Using integrations by parts and Cauchy--Schwarz's inequality, one gets, for all $j\geq 1$,
		\begin{equation*}\begin{gathered}\left\langle\psi(T;u,\ph_1)-\psi_1(T),\ph_je^{-i\lambda_1T}\right\rangle=i\sum_{p\in\llbracket 0,k-1\rrbracket}\sum_{\ell\in\llbracket 1,r\rrbracket}\langle\mu_{\ell}\ph_1,\ph_j\rangle(-i\omega_j)^p u_{p+1}^{\ell}(T)\\+\mathcal{O}\left(\left\|u_1\right\|_{L^2(0,T)}^2+\sqrt{T}\left\|u_k\right\|_{L^2(0,T)}+\left\|\psi(T;u,\ph_1)-\psi_1(T)\right\|_{L^2(0,1)}^2\right).\end{gathered}\end{equation*}
		Focusing on $j\in\{j_1,\cdots,j_{kr}\}$, we finally obtain
		\begin{equation*}M\left(u_p^{\ell}(T)\right)_{\substack{p\in\llbracket 1,k\rrbracket \\\ell\in\llbracket 1,r\rrbracket}}=\mathcal{O}\left(\left\|u_1\right\|_{L^2(0,T)}^2+\sqrt{T}\left\|u_k\right\|_{L^2(0,T)}+\left\|\psi(T;u,\ph_1)-\psi_1(T)\right\|_{L^2(0,1)}\right).\end{equation*}
		As $M$ is invertible, one obtains the desired result.
	\end{proof}
	\subsection{Interpolation inequality}
	In this paper, we expand the solution to the Schrödinger equation \eqref{schr} to the second-order term. The remainder term is given by \eqref{reste2} and is estimated as $\mathcal{O}\left(\left\|u_1\right\|_{L^2(0,T)}^3\right).$ The purpose of the following lemma is to compare this error term with the drift size $\left\|u_k\right\|_{L^2(0,T)}^2$.
	\begin{lm}\label{gn1} Assume that $k\geq 2$.
		There exists $C>0$  such that, for all $T>0$, $f\in H^{2k-3}((0,T),\rr)$,
		$$\left\|f_1\right\|_{L^2(0,T)}^3\leq C\left(1+T^{-2k+3}\right)\left\|f\right\|_{H^{2k-3}(0,T)}\left\|f_k\right\|_{L^2(0,T)}^2.$$
	\end{lm}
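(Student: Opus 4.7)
The plan is to reduce to a standard Gagliardo--Nirenberg interpolation by noting that if we set $\phi:=f_k$, then $f_1=\phi^{(k-1)}$ and $f^{(2k-3)}=\phi^{(3k-3)}$. The target inequality then reads, roughly,
$$\|\phi^{(k-1)}\|_{L^2}^3 \lesssim \|\phi^{(3k-3)}\|_{L^2}\,\|\phi\|_{L^2}^2,$$
and the interpolation exponent $1/3$ is exactly the one predicted by dimensional analysis since $k-1 = \tfrac{1}{3}(3k-3)$. The assumption $k \geq 2$ ensures $3k-3 \geq k-1 \geq 1$, so this interpolation makes sense. The only subtle point is tracking the correct $T$-dependence, which explains the factor $(1+T^{-2k+3})$.

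First I would rescale to the unit interval. Set $g(s):=\phi(Ts)$ for $s\in(0,1)$, so that $\|g^{(j)}\|_{L^2(0,1)}=T^{j-1/2}\|\phi^{(j)}\|_{L^2(0,T)}$ for every $j\in\nn$. The classical Gagliardo--Nirenberg inequality on $(0,1)$ yields a constant $C=C(k)$ such that
$$\|g^{(k-1)}\|_{L^2(0,1)}\leq C\|g^{(3k-3)}\|_{L^2(0,1)}^{1/3}\|g\|_{L^2(0,1)}^{2/3}+C\|g\|_{L^2(0,1)}.$$
Translating back via the scaling identities and cubing gives
$$\|\phi^{(k-1)}\|_{L^2(0,T)}^3\leq C\,\|\phi^{(3k-3)}\|_{L^2(0,T)}\,\|\phi\|_{L^2(0,T)}^2 + C\,T^{-3(k-1)}\|\phi\|_{L^2(0,T)}^3,$$
which, with $\phi=f_k$, becomes
$$\|f_1\|_{L^2(0,T)}^3\leq C\|f^{(2k-3)}\|_{L^2(0,T)}\|f_k\|_{L^2(0,T)}^2 + C\,T^{-3k+3}\|f_k\|_{L^2(0,T)}^3.$$

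The first term is already controlled by $\|f\|_{H^{2k-3}(0,T)}\|f_k\|_{L^2(0,T)}^2$. To absorb the second term, I would invoke the Duhamel-like representation
$$f_k(t)=\int_0^t\frac{(t-s)^{k-1}}{(k-1)!}f(s)\dd s,$$
from which Cauchy--Schwarz yields $\|f_k\|_{L^2(0,T)}\leq C T^{k}\|f\|_{L^2(0,T)}$. Factoring a single power of $\|f_k\|_{L^2}$ out of $\|f_k\|_{L^2}^3$ and using this crude bound gives
$$T^{-3k+3}\|f_k\|_{L^2(0,T)}^3\leq C\,T^{-2k+3}\|f\|_{L^2(0,T)}\|f_k\|_{L^2(0,T)}^2\leq C\,T^{-2k+3}\|f\|_{H^{2k-3}(0,T)}\|f_k\|_{L^2(0,T)}^2.$$
Summing the two contributions then produces the desired $(1+T^{-2k+3})\|f\|_{H^{2k-3}}\|f_k\|_{L^2}^2$ upper bound.

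The main technical obstacle is guaranteeing the correct $T$-scaling in the Gagliardo--Nirenberg step: on a bounded interval the interpolation inequality carries a lower-order term whose $T$-dependence must be tracked precisely. The explicit rescaling to $(0,1)$ handles this automatically, because after substitution each factor of $T$ can be read off from $\|g^{(j)}\|_{L^2(0,1)}=T^{j-1/2}\|\phi^{(j)}\|_{L^2(0,T)}$. Everything else reduces to standard Sobolev embeddings and the elementary estimate on $\|f_k\|_{L^2}$.
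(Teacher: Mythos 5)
Your argument is correct and follows essentially the same route as the paper: apply Gagliardo--Nirenberg to $\varphi=f_k$ with $j=k-1$, $m=3k-3$, $\alpha=1/3$ to get $\|f_1\|_{L^2}^3\leq C\|f^{(2k-3)}\|_{L^2}\|f_k\|_{L^2}^2+CT^{-3(k-1)}\|f_k\|_{L^2}^3$, then absorb the second term via $\|f_k\|_{L^2(0,T)}\leq CT^k\|f\|_{L^2(0,T)}$. Your explicit rescaling to $(0,1)$ merely fills in the $T$-dependence of the lower-order term that the paper takes from the cited references.
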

	\begin{proof}
		We apply the Gagliardo-Nirenberg interpolation inequalities -- see \cite{zbMATH03318089,ASNSP_1959_3_13_2_115_0} -- with $j=k-1$, $l=3k-3$, $\alpha=\frac{1}{3}$, $p=q=r=s=2$ et $\ph=f_k$ to obtain
		$$\left\|f_1\right\|_{L^2(0,T)}^3\leqslant C\left\|f^{(2k-3)}\right\|_{L^2(0,T)}\left\|f_k\right\|^2_{L^2(0,T)}+CT^{-3(k-1)}\left\|u_k\right\|_{L^2(0,T)}^3.$$
		Moreover,
		$$\left\|u_k\right\|_{L^2(0,T)}\leqslant T^k\left\|u\right\|_{L^2(0,T)}\leqslant t^k\left\|u\right\|_{H^{2k-3}(0,T)}.$$
		Thus, we obtain the desired result.
	\end{proof}
	\subsection{Proof of the drift}We prove Theorem \ref{maintheorem} as a consequence of the following more precise statement.
	\begin{theorem}\label{maintheorem2} Let $k,K,r\in \nn^*$,
		$\mu$ be functions satisfying $\mathbf{(H)_{conv}}$, $\mathbf{(H)_{reg}}$, $\mathbf{(H)_{lin}}$, $\mathbf{(H)_{null}}$ and $\mathbf{(H)_{pos}}$. If $k\geq 2$ (resp.\ $k=1$), there exist $C,T^*>0$ such that for all $T\in(0,T^*)$, there exists $\eta>0$ such that for all $u\in H^{2k-3}((0,T),\rr)^r$ (resp.\ $u\in L^2((0,T),\rr)^r$) with $\left\|u\right\|_{H^{2k-3}(0,T)}\leq \eta$ (resp.\ $\left\|u_1\right\|_{L^{\infty}(0,T)}\leq \eta$),
		\begin{equation}\label{nonstlc}(-1)^{k+1}\text{sgn}(\gamma_{2k-1}^1)\Im\left\langle\psi(T;u,\ph_1),\ph_K e^{-i\lambda_1T}\right\rangle\geq C\left\|u_k\right\|_{L^2}^2-C\left\|\psi(T;u,\ph_1)-\psi_1(T)\right\|_{L^2}^2.\end{equation}
	\end{theorem}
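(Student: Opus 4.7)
The plan is to combine the quadratic expansion of the solution obtained in Section 2 with the closed-loop control of boundary terms and a Gagliardo--Nirenberg interpolation to absorb the cubic remainder.

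\medskip

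First, I would project the expansion $\psi(T;u,\ph_1)\simeq\psi_1(T)+\Psi(T)+\xi(T)$ onto $\ph_Ke^{-i\lambda_1T}$ and take the imaginary part. The contribution of $\psi_1(T)$ is real (and so has zero imaginary part), and $\mathbf{(H)_{lin}}$ together with \eqref{schro:lin:exp} kills the imaginary part of $\langle \Psi(T),\ph_Ke^{-i\lambda_1T}\rangle$. Thus \eqref{reste2} reduces the problem to estimating $\Im\langle\xi(T),\ph_Ke^{-i\lambda_1T}\rangle$ up to an error $\mathcal{O}(\|u_1\|_{L^2}^3+\|\psi(T;u,\ph_1)-\psi_1(T)\|_{L^2}^3)$.

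\medskip

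Second, I would use the decomposition \eqref{schro:quad} together with Proposition \ref{devgt} applied to each pair $(f,g)=(u^\ell,u^L)$ with $\ell\leq L$. This yields
\begin{equation*}
\Im\langle\xi(T),\ph_Ke^{-i\lambda_1T}\rangle=(-1)^{k+1}\int_0^T\!\!\Bigl(\sum_{\ell=1}^r\gamma_{2k-1}^{\ell}\tfrac{u_k^\ell(t)^2}{2}+\hspace{-0.2cm}\sum_{1\leq\ell<L\leq r}\hspace{-0.2cm}\gamma_{2k-1}^{\ell,L}u_k^\ell(t)u_k^L(t)\Bigr)\cos(\omega_K(t-T))\dt+R,
\end{equation*}
where $R=\mathcal{O}\bigl(\sum_{p,\ell}|u_p^\ell(T)|^2+T\|u_k\|_{L^2}^2\bigr)$. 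By Remark \ref{rmqcoer}, $\text{sgn}(\gamma_{2k-1}^1)q$ is positive-definite, so there exists $c>0$ with $\text{sgn}(\gamma_{2k-1}^1)q(v)\geq c|v|^2$ for every $v\in\rr^r$. Since $\cos(\omega_K(t-T))\geq 1/2$ on $[0,T]$ for $T$ small enough (independent of $u$), the integral term is bounded below by $\tfrac{c}{2}\|u_k\|_{L^2}^2$.

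\medskip

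Third, I would control the boundary terms $|u_p^\ell(T)|^2$ for $p\in\llbracket 1,k\rrbracket$ via the closed-loop estimate of Proposition \ref{closed-lopp}: squaring gives
\begin{equation*}
\sum_{p,\ell}|u_p^\ell(T)|^2=\mathcal{O}\bigl(\|u_1\|_{L^2}^4+T\|u_k\|_{L^2}^2+\|\psi(T;u,\ph_1)-\psi_1(T)\|_{L^2}^2\bigr).
\end{equation*}
The $T\|u_k\|_{L^2}^2$ contribution (as well as the $T\|u_k\|_{L^2}^2$ appearing directly in $R$) can be absorbed in the lower bound $\frac{c}{2}\|u_k\|_{L^2}^2$ by choosing $T^*$ small enough. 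The contribution $\|\psi(T)-\psi_1(T)\|_{L^2}^2$ goes into the right-hand side of \eqref{nonstlc}. The remaining quartic piece $\|u_1\|_{L^2}^4$ is dominated by $\|u_1\|_{L^2}^3$ when $\|u_1\|_{L^\infty}$ (and hence $\|u_1\|_{L^2}$) is small, so it joins the cubic remainder coming from \eqref{reste2}.

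\medskip

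Finally, I would dispose of the cubic remainder $\|u_1\|_{L^2}^3$. For $k\geq 2$, Lemma \ref{gn1} gives $\|u_1\|_{L^2}^3\leq C(1+T^{-2k+3})\|u\|_{H^{2k-3}}\|u_k\|_{L^2}^2$, which is absorbed in $\frac{c}{2}\|u_k\|_{L^2}^2$ by picking $\eta$ small enough (the dependence $T^{-2k+3}$ is why $T^*$ is fixed first, then $\eta=\eta(T)$). For $k=1$ one simply has $\|u_1\|_{L^2}^3\leq T\|u_1\|_{L^\infty}\|u_1\|_{L^2}^2\leq T\|u_1\|_{L^\infty}^3$, and one argues directly with $\|u_1\|_{L^\infty}\leq\eta$, which is the reason for the $W^{-1,\infty}$ functional setting. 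The cubic contribution $\|\psi(T)-\psi_1(T)\|_{L^2}^3$ is swallowed by its square on the right-hand side of \eqref{nonstlc} provided we further shrink $\eta$ so that $\|\psi(T)-\psi_1(T)\|_{L^2}$ is small (which follows from continuous dependence and smallness of $u$). Combining everything produces \eqref{nonstlc}.

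\medskip

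The main obstacle, compared with the single-input case of \cite{bournissou2021quad}, is the treatment of the full collection of boundary terms $(u_p^\ell(T))_{p,\ell}$ together with the cross-pairs $\gamma_{2k-1}^{\ell,L}$ with $\ell<L$: the closed-loop estimate must simultaneously handle $kr$ scalars, which is exactly where Lemma \ref{liberte} (built from $\mathbf{(H)_{pos}}$) is invoked to produce an invertible $kr\times kr$ matrix $M$ and thus close the loop. Once \eqref{nonstlc} is established, Theorem \ref{maintheorem} follows by the usual contradiction argument: any target $\psi_f$ with $(-1)^{k+1}\text{sgn}(\gamma_{2k-1}^1)\Im\langle\psi_f,\ph_Ke^{-i\lambda_1T}\rangle<-C\|\psi_f-\psi_1(T)\|_{L^2}^2$, arbitrarily close to $\psi_1(T)$ in $H^3_{(0)}$, cannot be reached with a small control.
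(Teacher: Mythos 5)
Your proposal is correct and follows essentially the same route as the paper: reduction via $\mathbf{(H)_{lin}}$ and \eqref{reste2}, the kernel expansion of Proposition \ref{devgt} applied through \eqref{schro:quad}, absorption of the boundary terms via Proposition \ref{closed-lopp}, coercivity of $q$ with $\cos(\omega_K(t-T))$ bounded below for small $T$, and Lemma \ref{gn1} (resp.\ the direct $L^\infty$ bound) to absorb the cubic remainder for $k\geq2$ (resp.\ $k=1$), with the same order of quantifiers ($T^*$ first, then $\eta(T)$). The only cosmetic remark is that the bound $\|\psi(T)-\psi_1(T)\|_{L^2}^3\leq 2\|\psi(T)-\psi_1(T)\|_{L^2}^2$ holds trivially since both states lie on the $L^2$ sphere, so no continuous-dependence argument is needed there.
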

	\begin{rmq}Theorem \ref{maintheorem2} shows that there exists $0<R<1$ such that the following targets cannot be reached by the solution to \eqref{schr}
		\begin{equation*}\forall\delta\in(0,R), \qquad \psi_f:=\left(\sqrt{1-\delta^2}\ph_1+i(-1)^k\text{sgn}(\gamma_{2k-1}^1)\delta\ph_K\right)e^{-i\lambda_1T}.\end{equation*}
		Indeed, if there exist controls $u$ such that $\psi(T;u,\ph_1)=\psi_f$, the equation \eqref{nonstlc} leads to
		$$-\delta\geq K\left\|u_k\right\|_{L^2(0,T)}^2-2K(1-\sqrt{1-\delta^2})\geq -2K\delta^2.$$
		This is impossible when $\delta\to0$. Thus, we obtain Theorem \ref{maintheorem}.
	\end{rmq}
	\begin{proof}[Proof of Theorem \ref{maintheorem2}] 
		Using $\mathbf{(H)_{lin}}$ and the remainder estimate \eqref{reste2}, the quadratic expansion of the solution gives, as $\left\|u_1\right\|_{L^{\infty}(0,T)}\to 0$,
		$$\Im\left\langle\psi(T;u,\ph_1),\ph_Ke^{-i\lambda_1T}\right\rangle= \Im\langle\xi(T),\ph_Ke^{-i\lambda_1T}\rangle+\mathcal{O}\left(\left\|u_1\right\|_{L^2}^3+\left\|\psi(T;u,\ph_1)-\psi_1(T)\right\|_{L^2}^3\right).$$
		Using \eqref{schro:quad} and Proposition \ref{devgt}, one gets
		\begin{equation*}\begin{gathered}\Im\langle\psi(T;u,\ph_1),\ph_Ke^{-i\lambda_1T}\rangle=(-1)^{k+1}\int_0^Tq(u_k(t))\cos(\omega_K(t-T))\dt\\+ \mathcal{O}\left(\sum_{p\in\llbracket 1,k\rrbracket}\sum_{\ell\in\llbracket 1,r\rrbracket}|u_p^{\ell}(T)|^2+T\left\|u_k\right\|_{L^2(0,T)}^2+\left\|u_1\right\|_{L^2(0,T)}^3+\left\|\psi(T;u,\ph_1)-\psi_1(T)\right\|_{L^2(0,1)}^3\right).\\\end{gathered}\end{equation*}
		We use Proposition \ref{closed-lopp} to estimate the boundary terms. We obtain, as $\left\|u_1\right\|_{L^{\infty}(0,T)}\to 0$,
		\begin{equation}\begin{gathered}\label{prooffinalavantgn}\Im\left\langle\psi(T;u,\ph_1),\ph_Ke^{-i\lambda_1T}\right\rangle= (-1)^{k+1}\int_0^Tq(u_k(t))\cos(\omega_K(t-T))\dt\\+\mathcal{O}\left(T\left\|u_k\right\|_{L^2(0,T)}^2+\left\|u_1\right\|_{L^2(0,T)}^3+\left\|\psi(T;u,\ph_1)-\psi_1(T)\right\|_{L^2(0,1)}^2\right).\end{gathered}\end{equation}
		Let $T^*:=\frac{\pi}{3\omega_K}$ if $K\neq 1$, $T^*=+\infty$ else. Let $T\in(0,T^*)$. By $\mathbf{(H)_{pos}}$ and Remark \ref{rmqcoer}, there exists $C_2>0$ such that $\text{sgn}(\gamma_{2k-1}^1)q(a)\geq 4C_2\left\|a\right\|^2$, for every $a\in\rr^r$. Thus,
		\begin{equation}\label{coer} \text{sgn}(\gamma_{2k-1}^1)\int_0^Tq(u_k(t))\cos(\omega_K(t-T))\dt\geqslant 2C_2\left\|u_k\right\|_{L^2(0,T)}^2. \end{equation}
		\textit{First obstruction --} $k=1$: using \eqref{prooffinalavantgn}, we immediately obtain
		\begin{equation*}\begin{gathered}\Im\left\langle\psi(T;u,\ph_1),\ph_Ke^{-i\lambda_1T}\right\rangle= \int_0^Tq(u_1(t))\cos(\omega_K(t-T))\dt\\+\mathcal{O}\left(\left(T+\left\|u_1\right\|_{L^{\infty}(0,T)}\right)\left\|u_1\right\|_{L^2(0,T)}^2+\left\|\psi(T;u,\ph_1)-\psi_1(T)\right\|_{L^2(0,1)}^2\right).\end{gathered}\end{equation*}
		Then, there exist $C_1,T_1>0$ such that, for all $T\in(0,T_1)$, there exists $\eta_1>0$ such that, for all $u\in L^2((0,T),\rr)^r$ satisfying $\left\|u_1\right\|_{L^{\infty}(0,T)}<\eta_1$,
		\begin{equation*}\begin{gathered}\left|\Im\left\langle\psi(T;u,\ph_1),\ph_Ke^{-i\lambda_1T}\right\rangle-\int_0^Tq(u_1(t))\cos(\omega_K(t-T))\dt\right|\\\leq C_1\left(T+\left\|u_1\right\|_{L^{\infty}(0,T)}\right)\left\|u_1\right\|_{L^2(0,T)}^2+C_1\left\|\psi(T;u,\ph_1)-\psi_1(T)\right\|_{L^2(0,1)}^2.\end{gathered}\end{equation*}
		We fix $T_f:=\min\left(T_1,T^*,\frac{C_2}{2C_1}\right)$ and $\eta:=\min(\eta_1,\frac{C_2}{2C_1})$. This equation together with \eqref{coer} conclude the proof.
		\medskip\\
		\noindent
		\textit{Other obstructions --} $k\geq 2$:
		using the interpolation inequality proved in Lemma \ref{gn1} in \eqref{prooffinalavantgn}, one gets, as $\left\|u_1\right\|_{L^{\infty}(0,T)}\to 0$,
		\begin{equation*}\begin{gathered}\Im\left\langle\psi(T;u,\ph_1),\ph_Ke^{-i\lambda_1T}\right\rangle= (-1)^{k+1}\int_0^Tq(u_k(t))\cos(\omega_K(t-T))\dt\\+\mathcal{O}\left((T+(1+T^{-2k+3})\left\|u\right\|_{H^{2k-3}(0,T)})\left\|u_k\right\|_{L^2(0,T)}^2+\left\|\psi(T;u,\ph_1)-\psi_1(T)\right\|_{L^2(0,1)}^2\right).\end{gathered}\end{equation*}
		Then, there exist $C_1,T_1>0$ such that, for all $T\in(0,T_1)$, there exists $\eta_1>0$ such that, for all $u\in H^{2k-3}((0,T),\rr)^r$ satisfying $\left\|u_1\right\|_{L^{\infty}(0,T)}<\eta_1$,
		\begin{equation}\begin{gathered}\label{fp1}\left|\Im\left\langle\psi(T;u,\ph_1),\ph_Ke^{-i\lambda_1T}\right\rangle-(-1)^{k+1}\int_0^Tq(u_k(t))\cos(\omega_K(t-T))\dt\right|\\\leq C_1\left(\left(T+(1+T^{-2k+3}\right)\left\|u\right\|_{H^{2k-3}(0,T)})\left\|u_k\right\|_{L^2(0,T)}^2+\left\|\psi(T;u,\ph_1)-\psi_1(T)\right\|_{L^2(0,1)}^2\right).\end{gathered}\end{equation}
		Let  $T_f:=\min\left(T_1,T^*,\frac{C_2}{3C_1}\right)$. For all $T\in(0,T_f)$, we define $\eta:=\min(\eta_1,\frac{C_2}{3C_1},\frac{C_2}{3C_1}T^{2k-3})$. Then, for all $u\in H^{2k-3}((0,T),\rr)^r$ with $\left\|u\right\|_{H^{2k-3}(0,T)}\leq\eta$, the estimate \eqref{fp1} leads to
		\begin{equation*}\begin{gathered}\left|\Im\left\langle\psi(T;u,\ph_1),\ph_Ke^{-i\lambda_1T}\right\rangle- (-1)^{k+1}\int_0^Tq(u_k(t))\cos(\omega_K(t-T))\dt\right|\\\leq C_2\left\|u_k\right\|_{L^2(0,T)}^2+C_1\left\|\psi(T;u,\ph_1)-\psi_1(T)\right\|_{L^2(0,1)}^2.\end{gathered}\end{equation*}
		This equation together with \eqref{coer} lead to the desired result.
	\end{proof}
	\appendix
	\section{Postponed proofs}	\subsection{Existence of $\mu_1,\cdots,\mu_r$ verifying the hypotheses}\label{existe}
	\begin{theorem}\label{theoremexistence} Let $k,K,r\in\nn^*$. There exist $\mu_1,\cdots,\mu_r$ satisfying $\mathbf{(H)_{reg}}$, $\mathbf{(H)_{conv}}$,  $\mathbf{(H)_{lin}}$,  $\mathbf{(H)_{null}}$ and $\mathbf{(H)_{pos}}$.
	\end{theorem}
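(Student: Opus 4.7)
The plan is to work within $\mathcal{C}^{\infty}_c((0,1),\rr)^r$, a choice that by Remark \ref{rmqassympto} automatically secures $\mathbf{(H)_{reg}}$ and $\mathbf{(H)_{conv}}$. What remains is to realize the $r$ linear conditions $\mathbf{(H)_{lin}}$, the $r(k-1)+\binom{r}{2}(2k-1)$ cancellation conditions $\mathbf{(H)_{null}}$, and the open inequality $\mathbf{(H)_{pos}}$. Note that $\gamma_p^{\ell,L}$ is bilinear in $(\mu_\ell,\mu_L)$ when $\ell<L$ and quadratic in $\mu_\ell$ when $\ell=L$, so once one restricts to a suitable finite-dimensional test subspace the whole problem becomes a finite algebraic system with one open inequality.

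I would proceed by induction on $r$. The base case $r=1$ is established in \cite{bournissou2021quad}: there exists $\mu_{\star}\in\mathcal{C}^{\infty}_c(0,1)$ with $\langle\mu_{\star}\ph_1,\ph_K\rangle=0$, $\gamma_p^{1,1}(\mu_{\star})=0$ for $0\leq p\leq 2k-2$, and $\gamma_{2k-1}^{1,1}(\mu_{\star})\neq 0$, which is $\mathbf{(H)_{pos}}$ when $r=1$. For the inductive step, with $\mu_1,\dots,\mu_{r-1}$ already constructed, I would look for $\mu_r$ inside a large finite-dimensional subspace $E\subset\mathcal{C}^{\infty}_c(0,1)$ whose basis has Dirichlet--Fourier content localized on modes far from those activated by $\mu_1,\dots,\mu_{r-1}$. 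The condition $\langle\mu_r\ph_1,\ph_K\rangle=0$ and the cross-cancellations $\gamma_p^{\ell,r}=0$ for $1\leq\ell<r$, $0\leq p\leq 2k-2$, are all linear in $\mu_r$ and hence cut out a linear subspace $V\subset E$ of codimension at most $1+(r-1)(2k-1)$. Inside $V$, the remaining conditions on $\mu_r$ are the $k-1$ self-cancellations $\gamma_{2p-1}^{r,r}=0$ for $1\leq p\leq k-1$ together with $\mathbf{(H)_{pos}}$; my aim is to replay the single-input construction inside $V$, producing $\mu_r\in V$ with $\gamma_{2k-1}^{r,r}\neq 0$ and large enough that, after rescaling $\mu_r\leftarrow s\mu_r$, the $r\times r$ matrix $(\gamma_{2k-1}^{\ell,L})_{\ell,L}$ becomes strictly diagonally dominant; by Remark \ref{rmqcoer} this yields $\mathbf{(H)_{pos}}$.

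The main obstacle is to make this frequency-separation heuristic rigorous: one must show that the algebraic identities underlying the $r=1$ construction of \cite{bournissou2021quad} can still be carried out inside the codimension $1+(r-1)(2k-1)$ subspace $V$ carved out by $\mathbf{(H)_{lin}}$ and the cross-cancellations with the already-built $\mu_1,\dots,\mu_{r-1}$, \emph{and} that the resulting off-diagonal entries $\gamma_{2k-1}^{\ell,r}$ can indeed be made small enough compared to the diagonal ones. If a fully abstract induction proves delicate, a robust alternative is to verify the statement in a well-chosen model example --- for instance $\mu_\ell(x)=\chi(x)P_\ell(x)$ with $\chi\in\mathcal{C}^{\infty}_c(0,1)$ a fixed cutoff and $P_\ell$ polynomials whose coefficients are tuned to solve the $\mathbf{(H)_{null}}$ equations --- and then invoke the fact that $\mathbf{(H)_{pos}}$ is an open condition on this finite-dimensional parameter space, so that non-emptiness at a single point already establishes genericity.
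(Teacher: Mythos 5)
Your overall strategy (work in $\mathcal{C}^{\infty}_c(0,1)^r$ so that $\mathbf{(H)_{reg}}$ and $\mathbf{(H)_{conv}}$ are free, then handle the linear constraints and leave one open sign condition) matches the paper's, which for $r=2$ uses a support-splitting trick from \cite{gherdaoui} to enforce \emph{exactly} $\gamma^{1,2}_{2k-1}=0$ alongside $\mathbf{(H)_{lin}}$ and $\mathbf{(H)_{null}}$, and then reduces $\mathbf{(H)_{pos}}$ via Remark \ref{rmqliner=2} to $\gamma^1_{2k-1}\gamma^2_{2k-1}>0$, which is obtained by adapting \cite[Theorem A.4]{bournissou2021quad}. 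However, your proposal has a genuine gap at its concluding step. The rescaling $\mu_r\leftarrow s\mu_r$ cannot produce $\mathbf{(H)_{pos}}$: since $\gamma^{\ell,r}_{2k-1}$ is linear in $\mu_r$ for $\ell<r$ and $\gamma^{r}_{2k-1}$ is quadratic in $\mu_r$, the rescaled quadratic form is just $q(a_1,\dots,a_{r-1},sa_r)$, which is definite if and only if $q$ is. Equivalently, the Sylvester minors scale by positive powers of $s$, so definiteness is scale-invariant; and strict diagonal dominance cannot be created by scaling either, because inflating $|\gamma^{r}_{2k-1}|\sim s^2$ in row $r$ simultaneously inflates the off-diagonal entries $|\gamma^{\ell,r}_{2k-1}|\sim s$ in every row $\ell<r$ whose diagonal entries are fixed. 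So the entire burden falls back on what you call the ``main obstacle'': actually making the cross terms $\gamma^{\ell,r}_{2k-1}$ small (or zero) in absolute terms while carrying out the single-input construction inside the constrained subspace $V$. That is precisely the step the paper resolves by choosing disjoint supports so that the cross terms vanish identically, and it is not addressed in your argument.

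A second, smaller omission: for $q$ to be definite the diagonal entries $\gamma^{\ell}_{2k-1}$, $1\leq\ell\leq r$, must all have the same sign (cf.\ Remark \ref{rmqcoer}), and nothing in your induction guarantees that the sign of $\gamma^{r}_{2k-1}$ produced at step $r$ agrees with the signs already fixed at earlier steps; this sign coherence is exactly the point of invoking \cite[Theorem A.4]{bournissou2021quad}, which allows one to prescribe $\gamma^{\ell}_{2k-1}>0$ for each $\ell$. Your fallback suggestion (a finite-dimensional family $\mu_\ell=\chi P_\ell$ plus openness of $\mathbf{(H)_{pos}}$) is reasonable in spirit but, as stated, still presupposes non-emptiness at one point, which is the whole content of the theorem.
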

	We use arguments very similar to those developed in \cite{bournissou2022smalltime,bournissou2021quad,gherdaoui}. That is why we will only give a proof skeleton for $r=2$.
	\begin{proof}[Ideas of proof] We prove more precisely the existence of $\mu_1,\mu_2\in\mathcal{C}^{\infty}_c(0,1)$ such that the previous assumptions are satisfied. If  $\mu_1,\mu_2\in\mathcal{C}^{\infty}_c(0,1)$, $\mathbf{(H)_{reg}}$ is already verified. This is also the case for  $\mathbf{(H)_{conv}}$ by Remark \ref{rmqassympto}.
		In \cite{gherdaoui}, using a trick that divides function supports into two parts, we have explained how to guarantee the existence of functions $\mu_1,\mu_2\in\mathcal{C}^{\infty}_c(0,1)$ so that the following equalities hold
		\begin{eqnarray}\label{hyp}&\langle\mu_1\ph_1,\ph_K\rangle=\langle\mu_2\ph_1,\ph_K\rangle=0,\\
			\label{hyp2}&\forall 1\leq p\leq 2k-2, \ \forall 1\leq\ell\leq L\leq 2, \quad \gamma_p^{\ell,L}=0,\\
			\label{hyp3}& \gamma^{1,2}_{2k-1}=0\end{eqnarray}
		As a consequence, $\mathbf{(H)_{lin}}$ and $\mathbf{(H)_{null}}$ are satisfied. Moreover using Remark \ref{rmqliner=2}, as $\gamma^{1,2}_{2k-1}=0$, the assumption $\mathbf{(H)_{pos}}$ becomes
		$$\gamma_{2k-1}^1\gamma_{2k-1}^2>0.$$
		Then, we want to guarantee that \eqref{hyp}, \eqref{hyp2}, \eqref{hyp3},  $\gamma_{2k-1}^1>0$ and $\gamma_{2k-1}^2>0$ can be satisfied simultaneously to conclude the proof. This is what Bournissou did in \cite[Theorem A.$4$]{bournissou2021quad}. By adapting her method to our setting, we obtain the claimed result.
	\end{proof}
	
	\subsection{Some series expansions}
	We recall that $\omega_j$ and $\nu_j$ are defined in \eqref{omeganu}.
	\begin{defi}	Let $a:=(a_j)_{j\geq 1},b:=(b_j)_{j\geq 1}$ be sequences of real numbers and $l\in\nn$  \textit{s.t.}\ $\left(a_jj^{2l}\right)_{j\geq 1}$,	$\left(b_jj^{2l}\right)_{j\geq 1}\in\ell^1(\nn^*)$. We define
		$\gamma_l(a,b):=\displaystyle\sum_{j=1}^{+\infty}\left(a_j\omega_j^{\lfloor\frac{l}{2}\rfloor}\nu_j^{\lfloor\frac{l+1}{2}\rfloor}-b_j\omega_j^{\lfloor\frac{l+1}{2}\rfloor}\nu_j^{\lfloor\frac{l}{2}\rfloor}\right).$
	\end{defi}
	\begin{rmq} We recall that $\gamma_p^{\ell,L}$ is specified in Definition \ref{defquadbra}. If $\mathbf{(H)_{conv}}$ holds, 
		$$\forall 0\leq p \leq 2k-1,\ 1\leq\ell\leq L\leq r, \qquad \gamma_p^{\ell,L}=\gamma_p\left(c^{\ell,L},c^{L,\ell}\right).$$
	\end{rmq}
	\begin{lm}\label{expan-somm-1}Let $\nu\in\nn$. There exist coefficients $\left(\beta_l^{\nu}\right)_{l\in\llbracket 0,\nu\rrbracket}\in\zz^{\nu+1}$ such that, for every $p\in\nn$ and $a:=(a_j)_{j\geq 1},b:=(b_j)_{j\geq 1}\in\rr^{\nn^*}$  verifying $\displaystyle\sum_{j=1}^{+\infty}\left(|a_j|+|b_j|\right)j^{2(2p+\nu)}<+\infty$,
		$$\sum_{j=1}^{+\infty}\left(a_j\omega_j^{p+\nu}\nu_j^p-b_j\omega_j^p\nu_j^{p+\nu}\right)=\sum_{l=0}^{\nu}\beta_l^{\nu}(-1)^l\gamma_{2p+l}\left(a,b\right)\omega_K^{\nu-l}.$$
	\end{lm}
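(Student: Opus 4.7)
My plan is to proceed by induction on $\nu$, denoting
$$S_\nu(p)(a,b):=\sum_{j=1}^{+\infty}\bigl(a_j\omega_j^{p+\nu}\nu_j^p-b_j\omega_j^p\nu_j^{p+\nu}\bigr),$$
which converges absolutely under the hypothesis because $\omega_j,\nu_j=\mathcal{O}(j^2)$. The driving identity is $\omega_j+\nu_j=\omega_K$, coming immediately from the definitions \eqref{omeganu}. All manipulations will rest on rewriting either $\omega_j=\omega_K-\nu_j$ in the $a$-part or $\nu_j=\omega_K-\omega_j$ in the $b$-part of one of the summands.

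For the base cases, I would take $\nu=0$ and $\nu=1$. When $\nu=0$, we have directly $S_0(p)=\sum_j(a_j-b_j)\omega_j^p\nu_j^p=\gamma_{2p}(a,b)$, so $\beta_0^0=1$. When $\nu=1$, a direct computation using $a_j\omega_j=a_j(\omega_K-\nu_j)$ and $b_j\nu_j=b_j(\omega_K-\omega_j)$ gives $S_1(p)=\omega_K\gamma_{2p}(a,b)-\gamma_{2p+1}(a,b)$, so $\beta_0^1=\beta_1^1=1$.

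For the inductive step, assuming the claim holds up to $\nu$, I would apply the same substitutions to $\omega_j^{p+\nu+1}=\omega_j(\omega_K-\nu_j)^{\text{(once)}}\omega_j^{p+\nu-1}\cdot\omega_j$ --- more precisely, pulling out one factor of $\omega_j$ from the $a$-term and one factor of $\nu_j$ from the $b$-term --- to obtain the three-term recursion
\begin{equation*}
S_{\nu+1}(p)=\omega_K\,S_\nu(p)-S_{\nu-1}(p+1), \qquad \nu\geq 1.
\end{equation*}
Substituting the induction hypothesis into both terms on the right, and using that the parameter shift $p\mapsto p+1$ in $\gamma_{2p+l}$ acts as $\gamma_{2(p+1)+l}=\gamma_{2p+l+2}$, a reindexing $l\mapsto l-2$ in the second sum yields an expression of the required form with coefficients
\begin{equation*}
\beta_0^{\nu+1}=\beta_0^\nu,\qquad \beta_1^{\nu+1}=\beta_1^\nu,\qquad \beta_l^{\nu+1}=\beta_l^\nu-\beta_{l-2}^{\nu-1}\ \ (2\leq l\leq \nu),\qquad \beta_{\nu+1}^{\nu+1}=-\beta_{\nu-1}^{\nu-1}.
\end{equation*}
Since the coefficients on the right-hand side are integers by the induction hypothesis, so are those on the left, which closes the induction.

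The only delicate point is the derivation of the recursion $S_{\nu+1}(p)=\omega_K S_\nu(p)-S_{\nu-1}(p+1)$ and verifying that the remainder sum $\sum_j(a_j\omega_j^{p+\nu}\nu_j^{p+1}-b_j\omega_j^{p+1}\nu_j^{p+\nu})$ is indeed $S_{\nu-1}(p+1)$; this is a matter of carefully matching exponents. Everything else is bookkeeping, and the summability hypothesis $\sum_j(|a_j|+|b_j|)j^{2(2p+\nu)}<+\infty$ propagates automatically because reducing the total exponent by one (passing from $\nu+1$ to $\nu-1$ with $p\to p+1$) only makes the series more convergent.
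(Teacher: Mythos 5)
Your proposal is correct and follows essentially the same route as the paper: a two-step induction on $\nu$ with base cases $\nu=0,1$, driven by the identity $\omega_j+\nu_j=\omega_K$, leading to the recursion $S_{\nu+1}(p)=\omega_K S_\nu(p)-S_{\nu-1}(p+1)$ and the coefficient relation $\beta_l^{\nu+1}=\beta_l^{\nu}-\beta_{l-2}^{\nu-1}$, which is exactly the paper's $\beta_l^{\nu+2}=\beta_l^{\nu+1}-\beta_{l-2}^{\nu}$ up to an index shift. (Only a cosmetic slip: for the term $S_{\nu-1}(p+1)$ the summability requirement $j^{2(2(p+1)+\nu-1)}=j^{2(2p+\nu+1)}$ is exactly, not strictly weaker than, the hypothesis at level $\nu+1$; it is still implied, so the argument stands.)
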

	\begin{proof}
		We prove this statement by induction on $\nu\in\nn$. \textit{Initialisation}. For
		$\nu=0$, the desired equality is true by definition with $\beta_0^0=1$. For $\nu=1$, one can notice that $\omega_K-\nu_j=\omega_j$. Then,
		\begin{align*}\sum_{j=1}^{+\infty}\left(a_j\omega_j^{p+1}\nu_j^p-b_j\omega_j^p\nu_j^{p+1}\right)&=\sum_{j=1}^{+\infty}a_j\omega_j^p\nu_j^p(\omega_K-\nu_j)-\sum_{j=1}^{+\infty}b_j\omega_j^p\nu_j^p(\omega_K-\omega_j)\\&=\omega_K\gamma_{2p}(a,b)-\gamma_{2p+1}(a,b).\end{align*}
		We obtain the result with $\beta_0^1=\beta_1^1=1$. \textit{Induction step:} assume that the result holds for $\nu$ and $\nu+1$. Let $p\in\nn$, $(a_j)_{j\geq 1},(b_j)_{j\geq 1}\in\rr^{\nn^*}$ be such that $\displaystyle\sum_{j=1}^{+\infty}\left(|a_j|+|b_j|\right)j^{2(2p+\nu+2)}<+\infty$. Using the same strategy,
		\begin{equation*}\begin{gathered}
				\sum_{j=1}^{+\infty}\left(a_j\omega_j^{p+\nu+2}\nu_j^p-b_j\omega_j^p\nu_j^{p+\nu+2}\right)=\omega_K\sum_{j=1}^{+\infty}\left(a_j\omega_j^{p+\nu+1}\nu_j^p-b_j\omega_j^p\nu_j^{p+\nu+1}\right)\\-\sum_{j=1}^{+\infty}\left(\left(a_j\omega_j\nu_j\right)\omega_j^{p+\nu}\nu_j^p-\left(b_j\omega_j\nu_j\right)\omega_j^p\nu_j^{p+\nu}\right).\end{gathered}
		\end{equation*}
		We use the equality $\gamma_{2p+l}\left(\left(a_j\omega_j\nu_j\right)_{j\geq 1},\left(b_j\omega_j\nu_j\right)_{j\geq 1}\right)=\gamma_{2(p+1)+l}(a,b)$ and the induction hypothesis to obtain the result, with $\beta_l^{\nu+2}=\beta_l^{\nu+1}-\beta_{l-2}^{\nu}$.
	\end{proof}
	\begin{crl}\label{expan-somm-11}Assume that $\mathbf{(H)_{conv}}$ holds. Let $\nu\in\nn$,
		there exist $\left(\beta_l^{\nu}\right)_{l\in\llbracket 0,\nu\rrbracket},\left(\delta_l^{\nu}\right)_{l\in\llbracket 0,\nu\rrbracket}\in\zz^{\nu+1}$ such that, for every $p\in\nn$ satisfying $2p+\nu\leq 2k-1$, for every $1\leq\ell\leq L\leq r$,
		$$\sum_{j=1}^{+\infty}\left(c_j^{\ell,L}\omega_j^{p+\nu}\nu_j^p-c_j^{L,\ell}\omega_j^p\nu_j^{p+\nu}\right)=\sum_{l=0}^{\nu}\beta_l^{\nu}(-1)^l\gamma^{\ell,L}_{2p+l}\omega_K^{\nu-l},$$
		$$\sum_{j=1}^{+\infty}\left(c_j^{L,\ell}\omega_j^{p+\nu}\nu_j^p-c_j^{\ell,L}\omega_j^p\nu_j^{p+\nu}\right)=\sum_{l=0}^{\nu}\delta_l^{\nu}(-1)^l\gamma_{2p+l}^{\ell,L}\omega_K^{\nu-l}.$$
	\end{crl}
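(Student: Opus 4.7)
The plan is to deduce both equalities directly from Lemma \ref{expan-somm-1}, applied to two different choices of sequences $(a,b)$, and then to translate the resulting expressions into the notation $\gamma_p^{\ell,L}$ via a short symmetry computation. First I would check the hypothesis of Lemma \ref{expan-somm-1}: under $\mathbf{(H)_{conv}}$, one has $\sum_{j}|c_j^{\ell,L}|j^{4k}<+\infty$, which is more than enough whenever $2p+\nu\leq 2k-1$, so the lemma applies to the sequences $c^{\ell,L}$ and $c^{L,\ell}$.

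For the first identity, I would apply Lemma \ref{expan-somm-1} with $a:=c^{\ell,L}$ and $b:=c^{L,\ell}$. Combined with the remark preceding the corollary, namely $\gamma_l(c^{\ell,L},c^{L,\ell})=\gamma_l^{\ell,L}$, this immediately produces the announced formula with the integer coefficients $\beta_l^\nu$ inherited from Lemma \ref{expan-somm-1}; no further work is needed.

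For the second identity, I would apply the same lemma with the sequences swapped, $a:=c^{L,\ell}$ and $b:=c^{\ell,L}$, which expresses the left-hand side as $\sum_{l=0}^{\nu}\beta_l^{\nu}(-1)^l\gamma_{2p+l}(c^{L,\ell},c^{\ell,L})\omega_K^{\nu-l}$. The remaining work is then to rewrite each $\gamma_{2p+l}(c^{L,\ell},c^{\ell,L})$ as a $\zz$-linear combination of the canonical brackets $\gamma_{\cdot}^{\ell,L}$ with $\ell\leq L$. When $q=2p+l$ is even, $\lfloor q/2\rfloor$ and $\lfloor(q+1)/2\rfloor$ coincide, so the definitions give directly $\gamma_q(c^{L,\ell},c^{\ell,L})=-\gamma_q^{\ell,L}$. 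When $q=2m+1$ is odd, the key observation is $\omega_j+\nu_j=\omega_K$, which yields
$$\gamma_q^{\ell,L}-\gamma_q(c^{L,\ell},c^{\ell,L})=\sum_{j=1}^{+\infty}(c_j^{\ell,L}-c_j^{L,\ell})\omega_j^m\nu_j^m(\omega_j+\nu_j)=\omega_K\gamma_{q-1}^{\ell,L},$$
so that $\gamma_q(c^{L,\ell},c^{\ell,L})=\gamma_q^{\ell,L}-\omega_K\gamma_{q-1}^{\ell,L}$. Substituting these two relations into the combination obtained from the lemma and regrouping by the index of $\gamma^{\ell,L}$ produces the claimed formula with integer coefficients $\delta_l^\nu$.

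The constraint $2p+\nu\leq 2k-1$ is sharp: it ensures that every bracket $\gamma_{2p+l}^{\ell,L}$ and $\gamma_{2p+l-1}^{\ell,L}$ that appears in the rewriting has index in $\llbracket 0,2k-1\rrbracket$, where these brackets are defined by $\mathbf{(H)_{conv}}$. There is no substantial obstacle; the entire content of the corollary is Lemma \ref{expan-somm-1} together with the odd-index asymmetry created by exchanging the roles of $\ell$ and $L$, and this asymmetry is dissolved by the one-line identity $\omega_j+\nu_j=\omega_K$.
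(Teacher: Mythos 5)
Your proposal is correct, and the first identity is handled exactly as in the paper (Lemma \ref{expan-somm-1} with $a=c^{\ell,L}$, $b=c^{L,\ell}$, plus the remark $\gamma_q(c^{\ell,L},c^{L,\ell})=\gamma_q^{\ell,L}$). For the second identity your route differs slightly from the paper's: the paper simply reruns the induction of Lemma \ref{expan-somm-1} on the swapped sum to produce the coefficients $\delta_l^{\nu}$ directly, whereas you invoke the lemma once with $a=c^{L,\ell}$, $b=c^{\ell,L}$ and then convert each $\gamma_q(c^{L,\ell},c^{\ell,L})$ into the canonical brackets via the parity-dependent identities $\gamma_q(c^{L,\ell},c^{\ell,L})=-\gamma_q^{\ell,L}$ ($q$ even) and $\gamma_q(c^{L,\ell},c^{\ell,L})=\gamma_q^{\ell,L}-\omega_K\gamma_{q-1}^{\ell,L}$ ($q$ odd), both of which I checked against the definitions and which indeed rest on the same relation $\omega_j+\nu_j=\omega_K$ that drives the lemma's induction. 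Your version avoids a second induction and has the added benefit of giving $\delta_l^{\nu}$ explicitly in terms of $\beta_l^{\nu}$ (namely $\delta_l^{\nu}=\beta_l^{\nu}$ for $l$ odd and $\delta_l^{\nu}=\beta_{l+1}^{\nu}-\beta_l^{\nu}$ for $l$ even, with $\beta_{\nu+1}^{\nu}:=0$); your index bookkeeping ($0\leq 2p+l-1$ since $l\geq1$ when odd, and $2p+l\leq 2k-1$) and the verification of the summability hypothesis are also in order. The only point you leave implicit is the paper's trailing claim that $\beta_0^1=\beta_1^1=1$, which is read off from the initialisation step of Lemma \ref{expan-somm-1} and costs nothing.
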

	Moreover, $\beta_0^1=\beta_1^1=1$.
	\begin{proof}
		The first point is a direct consequence of Lemma \ref{expan-somm-1} with $a=c^{\ell,L}$ and $b=c^{L,\ell}$. The second one can be proved in the same way (induction).
	\end{proof}
	\begin{rmq} Assume that $\mu_1,\cdots,\mu_r\in\mathcal{C}^{\infty}_c(0,1)$ so that $\mathbf{(H)_{conv}}$ holds by Remark \ref{rmqassympto}. Then, for all $p,\nu\in\nn$ with $2p+\nu\leq 2k-1$, for all $1\leq\ell\leq L\leq r$, a bracket computation gives
		\begin{equation}\label{dimfini-inf-exp-somm2}\left\langle [\underline{\ad}_{A}^p(\mu_{\ell}),\underline{\ad}_A^{p+\nu}(\mu_L)]\ph_1,\ph_K\right\rangle=(-1)^{\nu}\sum_{j=1}^{+\infty}\left(c_j^{\ell,L}\omega_j^{p+\nu}\nu_j^p-c_j^{L,\ell}\omega_j^p\nu_j^{p+\nu}\right).\end{equation}
		Using \eqref{dimfini-inf-exp-somm2} and \cite[Proposition A.$11$]{gherdaoui}, the first expansion of Corollary \ref{expan-somm-11} can be written
		\begin{equation*}\begin{gathered}\left\langle[\underline{\ad}_{A}^p(\mu_{\ell}),\underline{\ad}_A^{p+\nu}(\mu_L)]\ph_1,\ph_K\right\rangle\\=\sum_{l=0}^{\nu}\beta_l^{\nu}(-1)^l\left\langle\underline{\ad}_A^{\nu-l}\left(\left[\underline{\ad}_A^{p+\lfloor\frac{l+1}{2}\rfloor}(\mu_{\ell}),\underline{\ad}_A^{p+\lfloor\frac{l}{2}\rfloor}(\mu_L)\right]\right)\ph_1,\ph_K\right\rangle.\end{gathered}\end{equation*}
		Thus, Corollary \ref{expan-somm-11} is the equivalent of \cite[Lemma A.$1$]{gherdaoui2} in the infinite-dimensional case. Moreover, the sequences defined in Corollary \ref{expan-somm-11} and \cite[Lemma A.$1$]{gherdaoui2} are the same.
	\end{rmq}
	\section*{Acknowledgement}
	
	The author would like to sincerely thank Karine Beauchard and Frédéric Marbach for the many discussions that shaped this article. The author acknowledges support from grants ANR-20-CE40-0009 (Project TRECOS) and ANR-11-LABX-0020 (Labex Lebesgue), as well as from the Fondation Simone et Cino Del Duca – Institut de France.

\printbibliography

@book{coronbook,
  title={{C}ontrol and {N}onlinearity},
  author= {Coron, {J}ean-{M}ichel},
  year={2007},
  publisher={{M}athematical {S}urveys and {M}onographs, vol. 136}}

@article{beauchard2024examplesmalltimegloballyapproximately,
	title={An example of a small-time globally approximately controllable bilinear Schr\"odinger equation}, 
	author={Karine Beauchard and Eugenio Pozzoli},
	journal={Annales de l'I.H.P., to appear},
	eprint={2407.05698},
	archivePrefix={arXiv},
	year={2024}
}

@article{gherdaoui,
	title={Small-Time Local Controllability of the multi-input bilinear Schrö-\\dinger equation thanks to a quadratic term}, 
	author={Théo Gherdaoui},
	journal ={ESAIM: Control, Optimisation and Calculus of Variations, to appear},
	year={2024},
	eprint={2407.07446},
	archivePrefix={arXiv}
}

@Article{zbMATH03318089,
	Author = {Gagliardo, Emilio},
	year={1959},
	Title = {Ulteriori proprieta di alcune classi di funzioni in piu variabili},
	FJournal = {Ricerche di Matematica},
	Journal = {Ric. Mat.},
	Volume = {8},
	Pages = {24--51}
}

@article{ASNSP_1959_3_13_2_115_0,
		author = {Nirenberg, Louis},
		title = {On elliptic partial differential equations},
		journal = {Annali della Scuola Normale Superiore di Pisa - Scienze Fisiche e Matematiche},
		pages = {115--162},
		volume = {Ser. 3, 13},
		year = {1959}
	}

@article{BOUSSAID2020108412,
	title = {Regular propagators of bilinear quantum systems},
	journal = {Journal of Functional Analysis},
	volume = {278},
	pages = {108412},
	year = {2020},
	author = {Nabile Boussaïd and Marco Caponigro and Thomas Chambrion}
}

@article{chambrion:hal-01901819,
	TITLE = {{On the bilinear control of the Gross-Pitaevskii equation}},
	AUTHOR = {Chambrion, Thomas and Thomann, Laurent},
	JOURNAL = {{Annales de l'Institut Henri Poincar{\'e} C, Analyse non lin{\'e}aire}},
	VOLUME = {37},
	PAGES = {605--626},
	YEAR = {2020}
}

@article{BEAUCHARD2006328,
	title = {Controllability of a quantum particle in a moving potential well},
	journal = {Journal of Functional Analysis},
	volume = {232},
	pages = {328--389},
	year = {2006},
	author = {Beauchard, Karine and Coron, Jean-Michel}
}

@article{doi:10.1137/18M1215207,
	author = {Chambrion, Thomas and Thomann, Laurent},
	title = {A Topological Obstruction to the Controllability of Nonlinear Wave Equations with Bilinear Control Term},
	journal = {SIAM Journal on Control and Optimization},
	volume = {57},
	pages = {2315--2327},
	year = {2019}
}

@article{beauchard2010local,
	title = {{L}ocal controllability of 1D linear and nonlinear {S}chrödinger equations with bilinear control},
	journal = {{J}ournal de {M}athématiques {P}ures et {A}ppliquées},
	volume = {94},
	pages = {520--554},
	year = {2010},
	author = {{B}eauchard, {K}arine and {L}aurent, {C}amille}
}

@article{bournissou2021local,
	title = {{L}ocal controllability of the bilinear 1{D} {S}chrödinger equation with simultaneous estimates},
	journal = {{M}athematical {C}ontrol and {R}elated {F}ields},
	volume = {13},
	pages = {1047--1080},
	year = {2023},
	author = {{B}ournissou, {M}égane }
}

@article{duca2022local,
	title={{L}ocal exact controllability of the 1{D} nonlinear {S}chr\"odinger equation in the case of {D}irichlet boundary conditions}, 
	journal = {SIAM Journal on Control and Optimization},
	volume = {63},
	pages = {S20--S36},
	year = {2025},
	author={{D}uca, {A}lessandro  and {N}ersesyan, {V}ahagn}
}

@article{bournissou2021quad,
	title = {{Q}uadratic behaviors of the 1{D} linear {S}chrödinger equation with bilinear control},
	journal = {{J}ournal of {D}ifferential {E}quations},
	volume = {351},
	pages = {324--360},
	year = {2023},
	author = {{B}ournissou, {M}égane}
}

@article{bournissou2022smalltime,
	title={{S}mall-time local controllability of the bilinear {S}chr\"odinger equation, despite a quadratic obstruction, thanks to a cubic term}, 
	journal = {ESAIM: {C}ontrol, {O}ptimisation and {C}alculus of {V}ariations},
	author={{B}ournissou, {M}égane},
	year={2024},
	volume={30}
}

@article{nersesyan,
	title = {Global approximate controllability for {S}chrödinger equation in higher {S}obolev norms and applications},
	journal = {Annales de l'Institut Henri Poincaré C, Analyse non linéaire},
	volume = {27},
	pages = {901--915},
	year = {2010},
	author = {Vahagn Nersesyan}
}

@article{articletkc,	
	title ={Minimal time for the approximate bilinear control of {S}chrödinger equations},
	author = {Beauchard, Karine and Coron, Jean-Michel and Teismann, Holger},
	year = {2018},
	volume = {41},
	pages = {1831--1844},
	journal = {Mathematical Methods in the Applied Sciences}
}

@article{Sigalotti2009GenericCP,
	title={Generic controllability properties for the bilinear Schr\"odinger equation}, 
	author={Paolo Mason and Mario Sigalotti},
	journal = {Communications in Partial Differential Equations},
	volume = {35},
	pages = {685--706},
	year = {2010}
}

@article{article,
	author = {Nersesyan, Vahagn and Nersisyan, Hayk},
	year = {2012},
	volume = {97},
	pages = {295--317},
	title = {Global exact controllability in infinite time of {S}chrödinger equation},
	journal = {Journal de Mathématiques Pures et Appliqués}
}

@article{beauchard2014minimaltimebilinearcontrol,
	title={Minimal time for the bilinear control of {S}chr\"odinger equations}, 
	author={Beauchard, Karine and Coron, Jean-Michel and  Teismann, Holger},
	journal = {Systems and Control Letters},
	volume = {71},
	pages = {1--6},
	year = {2014}
}

@article{morancey2013globalexactcontrollability1d,
	title={Global exact controllability of a {1D} {S}chrö-dinger equations with a polarizability term}, 
	author={Morancey, Morgan and Nersesyan, Vahagn},
	year={2013},
	pages={425--429},
	journal = {Comptes Rendus Mathematique}
}

@article{duca2021bilinearcontrolgrowthsobolev,
	title={Bilinear control and growth of {S}obolev norms for the nonlinear {S}chr\"odinger equation}, 
	author={Duca, Alessandro and Nersesyan, Vahagn},
	year = {2021},
	pages={1--20},
	journal={Journal of the European Mathematical Society}
}

@article{chambrion2022smalltime,
	title = {{S}mall-time bilinear control of {S}chrödinger equations with application to rotating linear molecules},
	journal = {{A}utomatica},
	volume = {153},
	pages = {111028},
	year = {2023},
	author = {{C}hambrion, {T}homas and {P}ozzoli, {E}ugenio}
}

@article{boscain:hal-04496433,
	TITLE = {{Schr{\"o}dinger eigenfunctions sharing the same modulus and applications to the control of quantum systems}},
	journal={Mathematics of Control, Signals, and Systems},
	AUTHOR = {Boscain, Ugo and Le Balc'h, K{\'e}vin and Sigalotti, Mario},
	YEAR = {2024},
	pages = {1--35}
}

@article{duca2024smalltimecontrollabilitynonlinearschrodinger,
	title={Small-time controllability for the nonlinear {S}chr\"odinger equation on $\mathbb{R}^N$ via bilinear electromagnetic fields}, 
	author={Duca, Alessandro and Pozzoli, Eugenio},
	year={2024},
	journal = {SIAM Journal on Control and Optimization},
	volume = {63},
	pages = {S37--S52}
}

@article{doi:10.1137/0320042,
	author = {{B}all, {J}ohn {M}. and {M}arsden, {J}errold {E}. and {S}lemrod, {M}arshall},
	title = {{C}ontrollability for {D}istributed {B}ilinear {S}ystems},
	journal = {{SIAM} {J}ournal on {C}ontrol and {O}ptimization},
	volume = {20},
	pages = {575--597},
	year = {1982},
}

@article{BEAUCHARD2005851,
	title = {{L}ocal controllability of a 1{D} {S}chrödinger equation},
	journal = {{J}ournal de {M}athématiques {P}ures et {A}ppliquées},
	volume = {84},
	pages = {851--956},
	year = {2005},
	author = {{B}eauchard, {K}arine}
}

@article{turinici:hal-00536518,
	title = {{On the controllability of bilinear quantum systems}},
	author = {{T}urinici, {G}abriel},
	journal = {{Mathematical models and methods for ab initio Quantum Chemistry}},
	pages = {75--92},
	year = {2000}
}

@article{Coron2004,
	author = {Coron, Jean-Michel and Crépeau, Emmanuelle},
	journal = {Journal of the European Mathematical Society},
	pages = {367--398},
	title = {Exact boundary controllability of a nonlinear {K}d{V} equation with critical lengths},
	volume = {006},
	year = {2004}
}

@article{ervedozapuel,
	author = {Ervedoza, Sylvain and Puel, Jean-Pierre},
	year = {2009},
	pages = {2111--2136},
	title = {Approximate controllability for a system of {S}chrödinger equations modeling a single trapped ion},
	volume = {26},
	journal = {Annales de l'Institut Henri Poincaré (C) Non Linear Analysis}
}

@article{Boscain_2012,
	title={A Weak Spectral Condition for the Controllability of the Bilinear {S}chrödinger Equation with Application to the Control of a Rotating Planar Molecule},
	volume={311},
	journal={Communications in Mathematical Physics},
	author={Boscain, Ugo and Caponigro, Marco and Chambrion, Thomas and Sigalotti, Mario},
	year={2012},
	pages={423--455} }

@article{Chambrion2008ControllabilityOT,
	title={Controllability of the discrete-spectrum {S}chr{\"o}dinger equation driven by an external field},
	author={Chambrion, Thomas and Mason, Paolo and Sigalotti, Mario and Boscain, Ugo},
	journal={Annales de l'Institut Henri Poincaré (C) Non Linear Analysis},
	year={2008},
	volume={26},
	pages={329--349}
}

@article{Cerpa2009,
	author = {{C}erpa, {E}duardo and {C}répeau, {E}mmanuelle},
	journal = {{A}nnales de l'I.H.P. {A}nalyse non linéaire},
	pages = {457--475},
	title = {Boundary controllability for the nonlinear {K}orteweg-de {V}ries equation on any critical domain},
	volume = {26},
	year = {2009}
}

@article{beauchard2013local,
	title = {{L}ocal controllability of 1{D} {S}chrödinger equations with bilinear control and minimal time},
	journal = {{M}athematical {C}ontrol and {R}elated {F}ields},
	volume = {4},
	pages = {125--160},
	year = {2014},
	author = {Beauchard, Karine and Morancey, Morgan}
}

@misc{nguyen2023localcontrollabilitykortewegdevries,
	title={Local controllability of the {K}orteweg-de {V}ries equation with the right {D}irichlet control}, 
	author={Nguyen, Hoai-Minh},
	year={2023},
	eprint={2302.06237},
	archivePrefix={arXiv}
}

@article{coron2020smalltime,
	title={{O}n the small-time local controllability of a {K}d{V} system for critical lengths}, 
	author={Coron, Jean-Michel and Koenig, Armand and Nguyen, Hoai-Minh},
	journal={Journal of the European Mathematical Society},
	year={2022},
	volume = {26},
	pages={1193--1253}
}

@article{AIHPC_2014__31_3_501_0,
	author = {{M}orancey, {M}organ},
	title = {{S}imultaneous local exact controllability of {1D} bilinear {S}chrö-\\dinger equations},
	journal = {{A}nnales de l'{I.H.P.} {A}nalyse non lin\'eaire},
	pages = {501--529},
	volume = {31},
	year = {2014},
}

@article{Beauchard_2023,
	title={{O}n expansions for nonlinear systems Error estimates and convergence issues},
	volume={361},
	journal={Comptes Rendus. Mathématique},
	author={Beauchard, Karine and Le Borgne, Jérémy and Marbach, Frédéric},
	year={2023},
	pages={97--189} }

@article{Fr_d_ric_Marbach_2018,
	title={An obstruction to small time local null controllability for a viscous Burgers’ equation},
	volume={51},
	journal={Annales scientifiques de l’École normale supérieure},
	author={Marbach, Frédéric},
	year={2018},
	pages={1129--1177} }

@article{BEAUCHARD202022,
	title = {Unexpected quadratic behaviors for the small-time local null controllability of scalar-input parabolic equations},
	journal = {Journal de Mathématiques Pures et Appliquées},
	volume = {136},
	pages = {22--91},
	year = {2020},
	author = {Beauchard, Karine and Marbach, Frédéric},
}

@article{CORON2006103,
	title = {On the small-time local controllability of a quantum particle in a moving one-dimensional infinite square potential well},
	journal = {Comptes Rendus Mathematique},
	volume = {342},
	pages = {103--108},
	year = {2006},
	author = {Coron, Jean-Michel},
}

@article{beauchard2017quadratic,
		title = {Quadratic obstructions to small-time local controllability for scalar-input systems},
		journal = {Journal of Differential Equations},
		volume = {264},
		pages = {3704--3774},
		year = {2018},
		author = {Beauchard, Karine and Marbach, Frédéric},
	}

@article{beauchard2024unified,
	title={{A} unified approach of obstructions to small-time local controllability for scalar-input systems}, 
	journal={Journal of Dynamical and Control Systems, to appear},
	eprint={2205.14114},
	archivePrefix={arXiv},
	author={Beauchard, Karine and Marbach, Frédéric},
	year={2024}
}

@misc{gherdaoui2,
	TITLE = {{Quadratic obstructions to small-time local controllability for multi-input systems}},
	AUTHOR = {Gherdaoui, Th{\'e}o},
	YEAR = {2024},
	eprint={2412.17384},
	archivePrefix={arXiv},
}

@misc{beauchard2025obstructionsmalltimelocalcontrollability,
	title={An obstruction to small-time local controllability for a bilinear Schrödinger equation}, 
	author={Karine Beauchard and Frédéric Marbach and Thomas Perrin},
	eprint={2501.03882},
	archivePrefix={arXiv},
	year={2025}
}
\end{document}